\theoremstyle{thmstyleone}%
\newtheorem{theorem}{Theorem}
\newtheorem{proposition}[theorem]{Proposition}%
\newtheorem{lemma}[theorem]{Lemma}%
\newtheorem{corollary}[theorem]{Corollary}%
\newtheorem{assumption}{Assumption}%
\theoremstyle{thmstyletwo}%
\theoremstyle{thmstylethree}%
\newtheorem{definition}{Definition}%
\begin{document}

\title[Inexact Policy Iteration Methods]{Inexact Policy Iteration Methods for Large-Scale Markov Decision Processes}


\author*[1]{\fnm{Matilde} \sur{Gargiani}}\email{gmatilde@ethz.ch}

\author[1]{\fnm{Robin} \sur{Sieber}}\email{rosieber@student.ethz.ch}

\author[2]{\fnm{Efe} \sur{Balta}}\email{efe.balta@inspire.ch}

\author[3]{\fnm{Dominic} \sur{Liao-McPherson}}\email{dliaomcp@mech.ubc.ca}

\author[1]{\fnm{John} \sur{Lygeros}}\email{jlygeros@ethz.ch}

\affil*[1]{\orgdiv{Automatic Control Laboratory}, \orgname{ETH}, \orgaddress{\street{Physikstrasse 3}, \city{8092 Zurich}, \country{Switzerland}}}

\affil[2]{\orgdiv{Control and Automation Group}, \orgname{Inspire AG}, \orgaddress{\city{Zurich}, \country{Switzerland}}}

\affil[3]{\orgdiv{Department of Mechanical Engineering}, \orgname{UBC}, \orgaddress{\street{2054-6250 Applied Science Lane}, \city{Vancouver}, \state{British Columbia}, \country{Canada}}}



\abstract{We consider inexact policy iteration (iPI) methods for large-scale infinite-horizon discounted Markov decision processes (MDPs) with finite state and action spaces, a variant of policy iteration where the policy evaluation step is implemented inexactly using an iterative solver for linear systems. 
In the classical dynamic programming literature, a similar principle is deployed in optimistic policy iteration, where an a-priori fixed-number of iterations of value iteration is used to inexactly solve the policy evaluation step.
Inspired by the connection between policy iteration and semismooth Newton's method, we investigate a class of iPI methods that mimic the inexact variants of semismooth Newton's method by adopting a parametric stopping condition to regulate the level of inexactness of the policy evaluation step.
For this class of methods we discuss local and global convergence properties and derive a practical range of values for the stopping-condition parameter that provide contraction guarantees. We also show that, when the iterative solver for policy evaluation enjoys linear contraction, the stopping criterion is verified in a finite-number of iterations.
Our analysis is general and therefore encompasses a variety of iterative solvers for policy evaluation, including the standard value iteration as well as more sophisticated ones such as GMRES.
As underlined by our analysis, the selection of the inner solver is of fundamental importance for the performance of the overall method. We therefore consider different iterative methods to solve the policy evaluation step and analyse their applicability and contraction properties when used for policy evaluation. 
In particular, we study Richardson's method, steepest descent, the minimal residual method and GMRES for policy evaluation. 
We show that the contraction properties of these methods tend to be enhanced by the specific structure of policy evaluation and that there is margin for substantial improvement over value iteration in terms of convergence rate.  
Finally, we study the numerical performance of different instances of inexact policy iteration on large-scale MDPs for the design of health policies to control the spread of infectious diseases in epidemiology.
}

\keywords{Dynamic Programming, Inexact Semismooth Newton-Type Methods, Policy Iteration, Regular Markov Decision Processes, GMRES}



\maketitle

\section{Introduction}\label{sec1}

Markov decision processes (MDPs) are a mathematical model for dynamic decision-making problems under uncertainty~\cite{puterman_mdp} and have been used over the years in a variety of different applications, from agriculture~\cite{agriculture_MDP} to epidemiology~\cite{sis_model} and finance~\cite{finance_MDP}, to name a few. We refer to~\cite{White1993ASO} for a comprehensive survey on applications of MDPs. 

The size of these problems is usually very large. For systems modeled with a set of propositional
state variables, the state space grows exponentially with the
number of variables. This problem becomes even more
relevant for MDPs with continuous state-spaces. If the continuous space
is discretized to find a solution, the discretization causes
another level of exponential blow up. This phenomenon is known as the \textit{curse of dimensionality}~\cite{Bellman:1957} and limits the exact solution methods to problems with a compact representation which fits into the memory RAM of the hardware at hand. For these scenarios, it is fundamental to design exact algorithms that scale well with the dimensionality of the problem. This is the main motivation behind optimistic policy iteration (OPI), a variant of policy iteration (PI) which economizes in computation by approximating the policy evaluation step using a fixed finite number of value iteration updates~\cite[Chapter 2]{DB_book}.
Indeed, even if it locally enjoys a quadratic rate of convergence~\cite{gargiani_2022}, PI has expensive iterations. In the large-scale setting, the computational costs per-iteration are mostly due to the policy evaluation step, which requires the exact solution of a linear system with size equal to the state space of the MDP. Consequently, even though PI converges in very few
iterations, its time performance is not scalable as it degrades rapidly with the size of the state space of the MDP. 

A similar problem arises in Newton's method and its semismooth variants, where inexactness is introduced in the solution of the Newtonian linear systems in order to reduce the computational costs and, in general, the level of inexactness is regulated via a parametric stopping condition that depends on locally available information~\cite{inexact_Newton, MARTINEZ1995127}. Different iterative methods for linear systems can be deployed to produce the approximate solution and generally the choice is dictated by the specific structure of the linear systems~\cite{inexactnewtonMR, inexactgmresnewton, inexactnewtoncg}. Local convergence results which are independent from the selected iterative method are available in the literature~\cite{inexact_Newton, MARTINEZ1995127}. These inexact variants are collectively known as inexact Newton methods and are often deployed in large-scale scenarios as their performance is more scalable than that of Newton's method~\cite{DEUFLHARD1991366}. 

In light of the connection between PI and semismooth Newton's method~\cite{gargiani_2022}, we analyse the class of inexact policy iteration methods, which are characterized by an approximation of the solution of the policy evaluation step and where the level of inexactness is regulated by a parametric stopping condition that is borrowed from the inexact variants of Newton's method. This class of methods has been originally proposed in~\cite{gargiani_2023}, where the authors focus on the deployment of GMRES as inner solver, showcasing the promising performance of the resulting inexact scheme on large-scale artificial MDPs.   
Building upon these results, we extend the analysis of inexact policy iteration methods.
In particular, the main contributions of this paper can be summarized as follow:
\begin{itemize}
    \item a convergence analysis of inexact policy iteration methods which encompasses the choice of the inner solver, 
    \item a characterization of the convergence properties of different iterative methods for policy evaluation, which proves that value iteration for policy evaluation is not optimal in the sense of contraction rate for a certain class of MDPs and underlines the advantages of deploying different iterative methods than value iteration as inner solvers, 
    \item an empirical analysis of the performance of inexact policy iteration methods on MDPs arising from a dynamic extension of SIS models for epidemiology.
\end{itemize}

The paper is organized as follows. In Section~\ref{sec2} we describe the problem setting and propose a classification of MDPs based on the properties of the transition probability matrices of their induced Markov chains. We also briefly discuss the classic dynamic programming methods and their convergence and scalability properties. In Section~\ref{sec3} we discuss the connection between semismooth Newton's method and policy iteration, summarizing the main results of~\cite{gargiani_2022} and~\cite{gargiani_2023}. In addition, we provide an upper-bound on the global semismooth constant at the root of the Bellman residual function (Lemma~\ref{lemma: strong_semismoothness}) and derive an estimate for the region of attraction of semismooth Newton-type methods for DP (Theorem~\ref{th: semismooth_newton_convergence}). Section~\ref{sec4} is dedicated to inexact policy iteration methods. After a description of the methods in this class, we provide local and global convergence guarantees.  We also prove that, under some general assumptions on the inner solver contraction properties, the overall scheme is well-posed as the stopping condition is always met in a finite number of inner iterations when the parameter in the stopping condition is constant. This result stresses the importance of selecting a well-performing inner solver for the approximate policy evaluation step. We therefore proceed by characterizing the convergence properties of different iterative methods for linear systems when deployed for policy evaluation. In particular, we analyse Richardson's method~\ref{subsec:richardson}, the steepest descent method~\ref{subsec:steepest_descent}, the minimal residual method~\ref{subsec:minres} and GMRES~\ref{subsec:gmres}. 
Finally, in Section~\ref{sec6} we propose a dynamic extension of SIS models for epidemiology and illustrate how to reformulate them as MDPs. We then study the empirical performance of different instances of inexact policy iteration on MDPs constructed from dynamic SIS models. Our benchmarks showcase the advantages of inexact policy iteration methods on large-scale MDPs over policy iteration as well as the importance of a careful selection of the inner solver. 

\subsection{Notation}
We denote with $\mathbb{S}^{n\times n}\subset \mathbb{R}^{n \times n}$ the set of $n\times n$ row-stochastic matrices. Given an $n\times n$ real matrix $A$ with $W\leq n$ distinct eigenvalues, we use $A\succ 0$ to indicate that the matrix is symmetric positive definite, $\rho(A)$ to denote its spectral radius, $\kappa(A)$ for its condition number, $\mu_A(x)$ for its minimal polynomial, $A_s$ for its symmetric part, \textit{i.e.}, $A_s = \frac{1}{2}\left( A + A^{\top}\right)$ ,  $\Lambda(A)$ for its spectrum and  $\lambda_1, \lambda_2, \dots, \lambda_W$ for its distinct eigenvalues, where 
\begin{equation*}
    \vert \lambda_1 \vert \geq \vert \lambda_2 \vert \geq \dots \geq \vert \lambda_W \vert \,.
\end{equation*}
 We use $a(\lambda)$ to denote the algebraic multiplicity of $\lambda \in \Lambda(A)$, and $b(\lambda)$ for its associated degree in the minimal polynomial, \textit{i.e.},
\begin{equation*}
\mu_A(x) = \Pi_{\lambda \in \Lambda(A)} (x - \lambda)^{b(\lambda)}\,.
\end{equation*}
If $A$ is symmetric, then we use $\lambda_{\max}(A) = \max \left\{\lambda_1, \,\dots,\,\lambda_W\right\}$ and $\lambda_{\min}(A) = \min \left\{\lambda_1,\,\dots,\,\lambda_W\right\}$.
In addition, we use $a_{ij}$ and $a^{(q)}_{ij}$ for the $(i,j)$-element of $A$ and $A^q$, respectively. Analogously, given a vector $v \in \mathbb{R}^n$, we denote with $v_i$ its $i$-th entry. We call non-negative (positive) a square matrix whose elements are non-negative (positive). Given $\delta>0$ and $x^*\in \mathbb{R}^n$, we use $\mathcal{B}(x^*, \delta)$ to denote the set of points $x\in\mathbb{R}^n$ such that $\Vert x- x^*\Vert_{\infty} < \delta$.

\section{Infinite-Horizon Discounted Markov Decision Processes with Finite Spaces}\label{sec2}

A discounted MDP with finite spaces is a 5-tuple $\left\{ \mathcal{S}, \, \mathcal{A},\, P, \, g,\, \gamma \right\}$, where $\mathcal{S}$ and $\mathcal{A}$ are the finite sets of states and actions, respectively; $P: \mathcal{S}\times \mathcal{A} \times \mathcal{S} \rightarrow [0,1]$ is the transition probability function, where $P(i,a,j)$ defines the probability of transitioning to state $j$ when applying action $a$ in state $i$; $g:\mathcal{S}\times \mathcal{A}\rightarrow [-R, \, R]$ is the stage-cost function, which associates to each state-action pair a bounded cost; and $\gamma\in(0,1)$ is a discount factor. Without loss of generality, we consider $\mathcal{S} = \left\{ 1, \dots, n \right\}$ and $\mathcal{A} = \left\{ 1,\dots, m \right\}$.
Because of the presence of actions, MDPs can be regarded as an extension of Markov chains, where the actions are generally selected according to a given criterion called policy. There exists different types of policies, \textit{e.g.}, stochastic, non-stationary etc.... In this work we consider the subclass of deterministic stationary control policies. In particular, a deterministic stationary control policy $\pi : \mathcal{S} \rightarrow \mathcal{A}$ is a function that maps states to actions. We denote with $\Pi$ the finite set comprising all the deterministic stationary control policies, from now on simply called policies. 

Consider now a generic discounted MDP with finite spaces. At time step $t$ of the decision process under policy $\pi\in\Pi$, the system is in some state $s_t$ and the action $a_t = \pi(s_t)$ is applied. The discounted cost $\gamma^t g(s_t, a_t)$ is accrued and the system transitions to a state $s_{t+1}$ according to the probability distribution $P(s_t, a_t, \cdot)$. Starting from state $i$, this process is repeated over an infinite-horizon, leading to the following cumulative discounted cost
\begin{equation}\label{eq:cumulative_cost}
V^{\pi}(i) = \lim_{T\rightarrow \infty} \mathbb{E} \left[ \sum_{t=0}^{T-1} \gamma^t g(s_t, \pi(s_t))\, \Big\vert \, s_0 = i \right]\,,  
\end{equation} 
where $\left\{ s_0, \pi(s_0), s_1, \dots \right\}$ is the state-action sequence generated by the MDP under policy $\pi$ and with initial state $s_0$. The expected value is taken with respect to the corresponding transition probability measure over the space of sequences. The optimal cost is defined as
\begin{equation}\label{eq:optimal_cost}
V^*(i) = \min_{\pi\in\Pi} V^{\pi}(i) \quad \forall\,  i \in \mathcal{S}\,.
\end{equation}
Any policy $\pi^*\in\Pi$ that attains the optimal cost is called \textit{optimal policy}. Notice that we restrict our attention to deterministic stationary control policies as in the considered setting there provably exists a policy in this class that attains~\eqref{eq:optimal_cost}~\cite[Chapter 1]{DB_book}. 

Given the cost function $V:\mathcal{S}\rightarrow \mathbb{R}$, we call \textit{greedy} with respect to the cost $V$ any policy that satisfies the following equation
\begin{equation}
\pi(i) \in \arg\min\left\{ g(i,\pi(i)) + \gamma\, \mathbb{E}_{s'\sim P(i,\pi(i),\cdot)} \left[ V(s') \right] \right\} \quad \forall \, i \in \mathcal{S}.
\end{equation}
We use $\text{GreedyPolicy}(V)$ to denote the operator which extracts a greedy policy associated with $V$. 

Equations~\eqref{eq:cumulative_cost} and~\eqref{eq:optimal_cost} admit recursive formulations which are also known as the \textit{Bellman equations}. In particular, given $\pi\in\Pi$
\begin{equation}\label{eq:Bellman_eq_pi}
V^{\pi}(i) = g(i, \pi(i)) + \gamma \, \mathbb{E}_{s' \sim P(i, \pi(i), \cdot)}\left[ V^{\pi}(s') \right] \quad \forall\, i \in \mathcal{S}
\end{equation} 
is the Bellman equation associated with policy $\pi$, and
\begin{equation}\label{eq:Bellman_eq_pi*}
V^*(i) = \min_{\pi \in \Pi} \left\{ g(i, \pi(i)) + \gamma \, \mathbb{E}_{s' \sim P(i, \pi(i), \cdot)}\left[ V^{*}(s') \right] \right\} \quad \forall\, i \in \mathcal{S}
\end{equation}
is the Bellman equation associated with the optimal cost. Let $\mathcal{F}(\mathcal{S},\, \mathbb{R})$ denote the space of functions mapping from $\mathcal{S}$ to $\mathbb{R}$. Starting from the Bellman equations, we can define the mappings $T_{\pi}:\mathcal{F}(\mathcal{S},\, \mathbb{R})\rightarrow \mathcal{F}(\mathcal{S},\, \mathbb{R})$ with
\begin{equation}\label{eq:Tpi_operator}
(T_{\pi}V)(i) =  g(i, \pi(i)) + \gamma \, \mathbb{E}_{s' \sim P(i, \pi(i), \cdot)}\left[ V(s') \right] \quad i \in \mathcal{S} \,,
\end{equation}
and $T:\mathcal{F}(\mathcal{S},\, \mathbb{R}) \rightarrow \mathcal{F}(\mathcal{S},\, \mathbb{R})$ with
\begin{equation}\label{eq:T_operator}
(TV)(i) = \min_{\pi \in \Pi} \left\{ g(i, \pi(i)) + \gamma \, \mathbb{E}_{s' \sim P(i, \pi(i), \cdot)}\left[ V(s') \right] \right\} \quad i \in \mathcal{S} \,.
\end{equation}
These mappings are known as the \textit{Bellman operators} and are very practical for shorthand notation and algorithmic description~\cite[Chapter 1]{DB_book}. The Bellman operators enjoy three fundamental properties: monotonicity, $\gamma$-contractivity in the infinity-norm and shift-invariance. For completeness, we report them for the $T$-operator in the following propositions and we refer to~\cite[Chapter 1]{DB_book} for the analogous properties for the $T_{\pi}$-operator and for the technical proofs.
\begin{proposition}[Monotonicity]\label{prop: monotonicity}
    For any $V:\mathcal{S}\rightarrow \mathbb{R}$ and $V':\mathcal{S}\rightarrow \mathbb{R}$, and for $k=0,1,\dots$, then
    \begin{equation}
        \left((T^k V)(i) - (T^k V')(i)\right) \, \cdot \, \left(  V(i) - V'(i)  \right) \geq 0 \quad  i \in \mathcal{S}\,.
    \end{equation}
\end{proposition}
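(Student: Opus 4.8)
The plan is to prove the statement by induction on $k$, reducing everything to the single-step monotonicity of $T$. The base case $k=0$ is immediate, since $T^{0}$ is the identity and the left-hand side collapses to $\left(V(i)-V'(i)\right)^{2}\geq 0$. For $k\geq 1$ it suffices to show that $T$ preserves the componentwise partial order on $\mathcal{F}(\mathcal{S},\mathbb{R})$, i.e. that $V(j)\leq V'(j)$ for every $j\in\mathcal{S}$ implies $(TV)(i)\leq (TV')(i)$ for every $i\in\mathcal{S}$; the product inequality in the statement is exactly the compact encoding of this order-preservation, since once $T^{k}$ is known to be order-preserving the two factors carry the same sign at each state and their product is nonnegative. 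Iterating the one-step result $k$ times then transfers the ordering of $V$ and $V'$ to $T^{k}V$ and $T^{k}V'$, which closes the induction.

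The heart of the argument, and the step I expect to be the main obstacle, is the single-step order preservation, because the $\min$ over $\Pi$ in the definition~\eqref{eq:T_operator} of $T$ is nonsmooth and does not distribute over the difference $(TV)(i)-(TV')(i)$. I would circumvent this with the standard device of evaluating $T$ at the greedy policy of the larger argument. Concretely, fix $i\in\mathcal{S}$, assume $V\leq V'$, and let $\pi'\in\Pi$ be greedy with respect to $V'$, so that $(TV')(i)=(T_{\pi'}V')(i)$. Since $(TV)(i)$ is a minimum over all policies, $\pi'$ is merely one feasible candidate and therefore $(TV)(i)\leq (T_{\pi'}V)(i)$; the problem is thereby reduced to comparing the single affine map $T_{\pi'}$ evaluated at the two ordered inputs $V$ and $V'$.

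This remaining comparison is the routine part. From~\eqref{eq:Tpi_operator}, the quantity $(T_{\pi'}V)(i)=g(i,\pi'(i))+\gamma\,\mathbb{E}_{s'\sim P(i,\pi'(i),\cdot)}[V(s')]$ is an affine, nondecreasing function of the entries of $V$, because the expectation is a weighted average with the nonnegative transition probabilities as weights; hence $V\leq V'$ gives $(T_{\pi'}V)(i)\leq (T_{\pi'}V')(i)$ at once. Chaining the three inequalities,
\begin{equation*}
(TV)(i)\leq (T_{\pi'}V)(i)\leq (T_{\pi'}V')(i)=(TV')(i),
\end{equation*}
and, since $i$ was arbitrary, $TV\leq TV'$; the case $V\geq V'$ follows by the symmetric choice of $\pi'$ greedy with respect to $V$. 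Applying this one-step statement to the ordered pair $T^{k}V$ and $T^{k}V'$ completes the induction and hence the proof.
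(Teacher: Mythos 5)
The paper itself gives no proof of this proposition (it defers the technical proofs to the cited reference), so your argument can only be judged on its own terms; its core --- one-step order preservation of $T$ obtained by evaluating at the greedy policy of the larger argument, followed by induction on $k$ --- is the standard textbook proof of monotonicity and is correct as far as it goes. The genuine gap is the sentence claiming that the product inequality in the statement ``is exactly the compact encoding'' of order preservation. It is not. Order preservation is a conditional statement: \emph{if} $V\leq V'$ at every state, \emph{then} $T^kV\leq T^kV'$ at every state. The displayed product inequality, by contrast, is asserted for arbitrary $V,V'$ and demands a per-state agreement of signs between $(T^kV)(i)-(T^kV')(i)$ and $V(i)-V'(i)$ even when $V$ and $V'$ are incomparable, and in that generality it is false. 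Take a two-state, single-action MDP with $g\equiv 0$ and $P(1,a,2)=P(2,a,2)=1$, and set $V=(1,0)$, $V'=(0,1)$: then $V(1)-V'(1)=1>0$ while $(TV)(1)-(TV')(1)=\gamma\left(V(2)-V'(2)\right)=-\gamma<0$, so the product at $i=1$, $k=1$ is $-\gamma<0$.

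In other words, your proof establishes the property that is actually used throughout the paper (and that the cited reference states), namely $V\leq V'\Rightarrow T^kV\leq T^kV'$, but it does not --- and cannot --- establish the proposition as literally written, because the literal statement is strictly stronger and false. The honest resolution is to note that Proposition~\ref{prop: monotonicity} must carry the hypothesis that $V$ and $V'$ are globally ordered (equivalently, restate it as the implication above); under that reading your base case, your greedy-policy chain $(TV)(i)\leq (T_{\pi'}V)(i)\leq (T_{\pi'}V')(i)=(TV')(i)$, and your induction are all correct and complete. Silently treating the two formulations as equivalent is the step that would fail under scrutiny.
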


\begin{proposition}[$\gamma$-Contractivity]\label{prop: gamma_contractivity}
        For any $V:\mathcal{S}\rightarrow \mathbb{R}$ and $V':\mathcal{S}\rightarrow \mathbb{R}$, and for $k=0,1,\dots$, then
    \begin{equation}
        \max_{i \in \mathcal{S}} \, \vert (T^k V)(i) - (T^k V')(i) \vert \leq \gamma^k \max_{i \in \mathcal{S}} \, \vert V(i) - V'(i)\vert\,.
    \end{equation}
\end{proposition}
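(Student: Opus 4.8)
The plan is to first establish the single-step contraction, that is the case $k=1$, and then obtain the general statement by a straightforward induction on $k$ exploiting the semigroup structure $T^k = T\circ T^{k-1}$.

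For the base case I would introduce the state-action cost $Q_V(i,a) := g(i,a) + \gamma\,\mathbb{E}_{s'\sim P(i,a,\cdot)}[V(s')]$, so that $(TV)(i) = \min_{a\in\mathcal{A}} Q_V(i,a)$, and set $c := \max_{i\in\mathcal{S}}|V(i)-V'(i)|$. The first key step is to bound the per-action discrepancy: since the stage cost $g(i,a)$ cancels, one has $Q_V(i,a) - Q_{V'}(i,a) = \gamma\,\mathbb{E}_{s'\sim P(i,a,\cdot)}[V(s') - V'(s')]$, and because $P(i,a,\cdot)$ is a probability distribution (nonnegative and summing to one) this expectation is a convex combination of the values $V(s')-V'(s')$, each bounded in magnitude by $c$. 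Hence $|Q_V(i,a) - Q_{V'}(i,a)| \le \gamma c$ uniformly in $(i,a)$. This is exactly the point where the stochasticity of the transition kernel enters and produces the discount factor $\gamma$.

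The second key step, and the only genuinely nontrivial point, is to pass this bound through the minimization, since linearity is unavailable because of the $\min$. I would pick actions $\bar a\in\arg\min_a Q_V(i,a)$ and $\bar a'\in\arg\min_a Q_{V'}(i,a)$; then, using $Q_V(i,\bar a)\le Q_V(i,\bar a')$, I get $(TV)(i)-(TV')(i) = Q_V(i,\bar a) - Q_{V'}(i,\bar a') \le Q_V(i,\bar a') - Q_{V'}(i,\bar a') \le \gamma c$, and a symmetric argument yields $(TV')(i)-(TV)(i)\le\gamma c$. Combining the two gives $|(TV)(i)-(TV')(i)| \le \gamma c$ for every $i$, and taking the maximum over $i\in\mathcal{S}$ establishes $\max_{i\in\mathcal{S}}|(TV)(i)-(TV')(i)| \le \gamma \max_{i\in\mathcal{S}}|V(i)-V'(i)|$, which is the case $k=1$.

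Finally, for general $k$ I would argue by induction. The case $k=0$ holds with equality since $T^0$ is the identity; assuming the claim for $k-1$, I apply the single-step bound to the pair $T^{k-1}V$ and $T^{k-1}V'$ to obtain $\max_{i}|(T^kV)(i)-(T^kV')(i)| \le \gamma\,\max_{i}|(T^{k-1}V)(i)-(T^{k-1}V')(i)| \le \gamma^k\,\max_{i}|V(i)-V'(i)|$, completing the proof. The main obstacle is the handling of the $\min$ operator in the second step; everything else follows routinely once the convex-combination bound on the $Q$-values is in place.
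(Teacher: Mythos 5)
Your proof is correct. The paper does not actually prove this proposition itself --- it states it and defers the technical proof to the cited reference (Bertsekas, \emph{Dynamic Programming and Optimal Control}, Chapter 1) --- and your argument (per-action bound via the convex-combination property of $P(i,a,\cdot)$, the two-sided argmin comparison to pass the bound through the $\min$, then induction on $k$) is precisely the standard proof given there.
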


\begin{proposition}[Shift-Invariance]\label{prop: shift_invariance}
    For any $V:\mathcal{S}\rightarrow \mathbb{R}$, $r\in \mathbb{R}$ and $k=0,1,\dots$, then
    \begin{equation}
    (T^k(V + re))(i) = (T^k V)(i) + \gamma^k r\quad i \in \mathcal{S}\,,   
    \end{equation}
    where $e : \mathcal{S} \rightarrow \mathbb{R}$ is the unit function that takes value 1 identically on $\mathcal{S}$.
\end{proposition}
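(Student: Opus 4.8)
The plan is to establish the identity by induction on $k$, with the heart of the argument being the single-step case $k=1$, from which every higher power follows mechanically. The key structural fact I would exploit is that the expectation in the definition \eqref{eq:T_operator} of $T$ is taken against a probability distribution, so adding a constant shift to $V$ shifts the expectation term by exactly that constant, which can then be pulled out of the minimization over $\pi$.

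For the base case $k=0$, the operator $T^0$ is the identity, so $(T^0(V+re))(i) = V(i) + r = (T^0 V)(i) + \gamma^0 r$, as required. For the single step $k=1$, I would begin from \eqref{eq:T_operator} and use $(V+re)(s') = V(s') + r$ together with $\mathbb{E}_{s' \sim P(i,\pi(i),\cdot)}[r] = r$, since the transition probabilities sum to one. This yields
\begin{equation*}
(T(V+re))(i) = \min_{\pi \in \Pi}\left\{ g(i,\pi(i)) + \gamma\,\mathbb{E}_{s'\sim P(i,\pi(i),\cdot)}[V(s')] + \gamma r \right\}.
\end{equation*}
Because the term $\gamma r$ does not depend on $\pi$, it factors out of the minimization, giving $(T(V+re))(i) = (TV)(i) + \gamma r$ for every $i \in \mathcal{S}$.

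For the inductive step, I would assume the claim holds at level $k$, which pointwise reads $(T^k(V+re))(i) = (T^k V)(i) + \gamma^k r$ for all $i$; rewriting this as the functional identity $T^k(V+re) = T^k V + \gamma^k r\, e$ is the one bookkeeping point that deserves care. Applying the already-established single-step result to the function $T^k V$ with shift magnitude $\gamma^k r$ then gives
\begin{equation*}
T^{k+1}(V+re) = T\left( T^k V + \gamma^k r\, e\right) = T(T^k V) + \gamma\cdot\gamma^k r = T^{k+1} V + \gamma^{k+1} r,
\end{equation*}
which closes the induction. I do not anticipate any genuine obstacle here: the argument is elementary once the constant-shift-commutes-with-expectation observation is isolated, and the only place requiring attention is the translation between the pointwise statement and the functional one in the inductive step, so that the single-step identity can be reused with the updated function and the rescaled shift $\gamma^k r$.
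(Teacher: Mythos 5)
Your proof is correct. The paper itself does not include a proof of this proposition (it defers to \cite[Chapter 1]{DB_book}), and your argument---the constant-shift-through-expectation observation for the single-step case followed by induction on $k$---is precisely the standard argument found in that reference, with the pointwise-to-functional bookkeeping in the inductive step handled properly.
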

From their definitions we evince that the Bellman operators~\eqref{eq:Tpi_operator} and~\eqref{eq:T_operator} admit~\eqref{eq:cumulative_cost} and~\eqref{eq:optimal_cost} as fixed-points, respectively. Finally, uniqueness of their fixed-points follows directly from the Banach's fixed-point theorem~\cite{rockafellar_1976}.

Because we are dealing with finite state and action sets, any function $V$ on $\mathcal{S}$, including the mappings $T_{\pi}V$ and $TV$, can be represented as  an $n$-dimensional vector. With a slight abuse of notation, this leads to the following vector-notation
\begin{equation}\label{eq:compact_notation}
V = 
\begin{bmatrix}
V(1)\\
\vdots\\
V(n)
\end{bmatrix}\,,\quad
T_{\pi}V = 
\begin{bmatrix}
(T_{\pi}V)(1)\\
\vdots\\
(T_{\pi}V)(n)
\end{bmatrix}\,, \quad
TV = 
\begin{bmatrix}
(TV)(1)\\
\vdots\\
(TV)(n)
\end{bmatrix}\,, 
\end{equation}
which allows for a compact formulation of the Bellman equations, \textit{i.e.}, $V^{\pi} = T_{\pi}V^{\pi}$ and $V^{*} = TV^*$.
Finally, given a policy $\pi\in \Pi$, we can represent its associated transition probability distributions and stage-costs in matrix and vector-form, respectively. In particular, we denote with $P^{\pi} \in \mathbb{R}^{n\times n}$ its associated transition probability matrix, where $p^{\pi}_{ij} = P^{\pi}(i, \pi(i), j)$; and
with $g^{\pi}\in\mathbb{R}^{n}$ its associated cost-vector, where $g^{\pi}_{i} = g^{\pi}(i, \pi(i))$.
Notice that, for any $\pi\in \Pi$, $P^{\pi} \in \mathbb{S}^{n\times n}$.

Row-stochastic matrices are an important subset of non-negative matrices~\cite[Chapter 2]{nonnegative_matrices}, \textit{i.e.}, square matrices all of whose elements are non-negative. We now recall some fundamental definitions and properties of non-negative matrices that will be later used to produce a classification of discounted MDPs with finite spaces. We refer to~\cite[Chapter 2]{nonnegative_matrices} for a more thorough review on non-negative matrices as well as for the technical proofs of the following propositions.

\begin{proposition}\label{prop: rho_eigenvalue}
    If $A$ is non-negative, then $(\rho(A), 0)$ is an eigenvalue. 
\end{proposition}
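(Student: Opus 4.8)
The statement is the part of Perron--Frobenius theory asserting that the spectral radius of a non-negative matrix is attained by a real eigenvalue, so that $\rho(A) \in \Lambda(A)$ (here the pair $(\rho(A),0)$ denotes the complex number $\rho(A)+0\cdot i$, i.e.\ a real eigenvalue). My plan is to reduce the claim to the strictly positive case by a perturbation-and-limit argument. As the base case I would take Perron's theorem for entrywise positive matrices as known: if $B \in \mathbb{R}^{n\times n}$ has all entries strictly positive, then $\rho(B)$ is an eigenvalue of $B$ and admits an entrywise positive eigenvector. The goal is then to transfer this conclusion to an arbitrary non-negative $A$ by approximating $A$ from above by positive matrices and passing to the limit.

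First I would regularise $A$. For $\varepsilon > 0$ set
\begin{equation*}
A_\varepsilon = A + \varepsilon E, \qquad E = \mathbf{1}\mathbf{1}^{\top},
\end{equation*}
where $\mathbf{1} \in \mathbb{R}^n$ is the vector of all ones, so that $A_\varepsilon$ has strictly positive entries. Applying the base case to $A_\varepsilon$ yields, for every $\varepsilon > 0$, a vector $v_\varepsilon \geq 0$ with $\Vert v_\varepsilon \Vert_\infty = 1$ satisfying
\begin{equation*}
A_\varepsilon v_\varepsilon = \rho(A_\varepsilon)\, v_\varepsilon.
\end{equation*}
Since $0 \leq A \leq A_\varepsilon$ entrywise, monotonicity of the spectral radius on non-negative matrices gives $\rho(A) \leq \rho(A_\varepsilon)$, and the family $\{\rho(A_\varepsilon)\}$ is bounded and non-increasing as $\varepsilon \downarrow 0$.

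The final step is the limit $\varepsilon \to 0^{+}$, which is where the main technical care is required. The entries of $A_\varepsilon$ are affine, hence continuous, in $\varepsilon$, so the coefficients of the characteristic polynomial $\det(\lambda I - A_\varepsilon)$ are continuous in $\varepsilon$; because the spectral radius, being the maximum modulus of the roots of that polynomial, is a continuous function of the matrix entries, we obtain $\rho(A_\varepsilon) \to \rho(A)$ as $\varepsilon \to 0^{+}$. The normalised eigenvectors $v_\varepsilon$ lie in the compact set $\{v \geq 0 : \Vert v \Vert_\infty = 1\}$, so along some sequence $\varepsilon_k \to 0$ they converge to a limit $v$ with $v \geq 0$ and $\Vert v \Vert_\infty = 1$, in particular $v \neq 0$. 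Passing to the limit in $A_{\varepsilon_k} v_{\varepsilon_k} = \rho(A_{\varepsilon_k}) v_{\varepsilon_k}$ and using $A_{\varepsilon_k} \to A$ then gives $A v = \rho(A) v$ with $v \geq 0$, $v \neq 0$, which shows that $\rho(A)$ is an eigenvalue of $A$. I expect the only genuine obstacle to be the rigorous justification that $\rho(\cdot)$ is continuous in the matrix entries and that non-negativity of the eigenvector survives the limit; both are standard, the former resting on continuity of polynomial roots in their coefficients and the latter on closedness of the non-negative orthant.
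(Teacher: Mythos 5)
Your proof is correct and is essentially the classical argument: the paper itself does not prove this proposition but defers to Theorem 1.1 in Chapter 2 of its reference on non-negative matrices, where exactly this perturbation-and-limit reduction to Perron's theorem for positive matrices is the standard route. The only ingredient you invoke without proof beyond Perron's theorem is monotonicity of $\rho$ on non-negative matrices, which is not actually needed since continuity of $\rho$ already gives $\rho(A_\varepsilon) \to \rho(A)$.
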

We refer to the proof of statement (a) in Theorem 1.1 in~\cite[Chapter 2]{nonnegative_matrices} for a proof of Proposition~\ref{prop: rho_eigenvalue}. 

Since $\rho(A) = 1$ when $A$ is stochastic, from the previous proposition we can deduce that stochastic matrices always have $(1, 0)$ as an eigenvalue. In addition, by the Gershgorin circle theorem~\cite{gerschgorin_31}, the spectrum of stochastic matrices is contained in the circle of radius $1$ and center $(0,0)$ in the complex plane. Combining these two properties, we obtain that $\lambda_1 =(1, 0)$ (see Theorem 5.3 in~\cite[Chapter 2]{nonnegative_matrices} for a formal proof). 

The following definition characterizes the subset of irreducible matrices.
\begin{definition}[Irreducible Non-Negative Matrices]\label{def:irreducible_matrices}
A non-negative matrix $A$ is \textit{irreducible} if and only if for every $(i,j)$ there exists a natural number $q$ such that $a^{(q)}_{ij}>0$.
\end{definition}
As we can infer from the following proposition, irreducibility of a non-negative matrix has important implications on the distribution of its eigenvalues.
\begin{proposition}\label{prop: irreducible_matrix_simple}
    If $A$ is irreducible, then $\rho(A)$ is a simple eigenvalue and any other eigenvalue of $A$ with modulus $\rho(A)$ is also simple.
\end{proposition}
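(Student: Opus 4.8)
The plan is to prove both claims through the classical Perron--Frobenius machinery, using Proposition~\ref{prop: rho_eigenvalue} (which already gives that $r := \rho(A)$ is an eigenvalue) together with the strong-connectivity content of Definition~\ref{def:irreducible_matrices}. The cornerstone is a positivity lemma: strong connectivity of the directed graph of $A$ is equivalent to $(I+A)^{n-1}$ being entrywise positive, so if $z \geq 0$, $z \neq 0$ and $Az = rz$, then $(1+r)^{n-1} z = (I+A)^{n-1} z > 0$, forcing $z > 0$. Applying this to $A$ and to $A^{\top}$ yields strictly positive right and left eigenvectors $x > 0$ and $y^{\top} > 0$ associated with $r$. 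Geometric simplicity of $r$ then follows quickly: given any $w$ with $Aw = rw$, I choose the scalar $c$ so that $w - cx \geq 0$ has at least one vanishing entry; since $A(w-cx) = r(w-cx)$, the positivity lemma rules out $w - cx \neq 0$ (which would be strictly positive), so $w = cx$ and the eigenspace is one-dimensional.

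Next I would upgrade this to algebraic simplicity by showing that $r$ is a simple root of $p(\lambda) = \det(\lambda I - A)$. Using the adjugate identity $(\lambda I - A)\,\mathrm{adj}(\lambda I - A) = p(\lambda)\,I$ together with Jacobi's formula $p'(\lambda) = \mathrm{tr}\big(\mathrm{adj}(\lambda I - A)\big)$, I evaluate at $\lambda = r$. Because $rI - A$ has rank $n-1$ (geometric simplicity), its adjugate is nonzero of rank one, with columns in $\ker(rI - A) = \mathrm{span}(x)$ and rows in the left kernel $\mathrm{span}(y^{\top})$; hence $\mathrm{adj}(rI - A) = \alpha\, x y^{\top}$ with $\alpha \neq 0$. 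Then $p'(r) = \alpha\, y^{\top} x \neq 0$ since $y^{\top} x > 0$, so $r$ is a simple root of $p$ and therefore algebraically simple, establishing the first claim.

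For the peripheral eigenvalues, let $\lambda = r e^{i\theta}$ with $Az = \lambda z$, $z \neq 0$. Taking moduli componentwise gives $A|z| \geq r|z|$; pairing with $y^{\top} > 0$ and using $y^{\top} A = r y^{\top}$ yields $y^{\top}\big(A|z| - r|z|\big) = 0$, hence $A|z| = r|z|$, so $|z|$ is a Perron vector and in particular $|z| > 0$. Equality in the triangle inequality then forces, for every $i,j$ with $a_{ij} > 0$, the phase relation $z_j/|z_j| = e^{i\theta}\, z_i/|z_i|$, which with $D = \mathrm{diag}(z_1/|z_1|, \dots, z_n/|z_n|)$ reads $D^{-1} A D = e^{i\theta} A$. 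Thus $A$ is similar to $e^{i\theta}A$, so the two matrices share the same Jordan structure; the simple eigenvalue $r$ of $A$ corresponds to the simple eigenvalue $e^{i\theta} r = \lambda$ of $e^{i\theta}A$, and by similarity $\lambda$ is simple for $A$ as well, proving the second claim.

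The main obstacle is the peripheral step: extracting the diagonal similarity $D^{-1} A D = e^{i\theta} A$ from the equality case of the triangle inequality and propagating the phase relation consistently across the support of $A$ genuinely relies on irreducibility, whereas the geometric and algebraic simplicity of $r$ are comparatively routine once the positivity lemma is in place.
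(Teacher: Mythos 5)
The paper does not actually prove this proposition; it defers to the literature (statement (b) of Theorem 1.4 in the cited chapter on non-negative matrices), and your argument is precisely the classical Perron--Frobenius proof found there: positivity of $(I+A)^{n-1}$, geometric then algebraic simplicity of $\rho(A)$ via the adjugate/Jacobi identity, and the diagonal similarity $D^{-1}AD = e^{i\theta}A$ for the peripheral spectrum. All the steps check out. The one ingredient you use silently is that $\rho(A)$ admits a \emph{non-negative} eigenvector for $A$ and for $A^{\top}$ --- Proposition~\ref{prop: rho_eigenvalue} as stated only gives that $\rho(A)$ is an eigenvalue --- so a complete write-up should either invoke that standard fact explicitly or derive it (e.g.\ via the Collatz--Wielandt characterization) before feeding $z\geq 0$ into your positivity lemma.
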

We refer to the proof of statement (b) in Theorem 1.4 in~\cite[Chapter 2]{nonnegative_matrices} for a proof of Proposition~\ref{prop: rhoA_greater1}. 

Within the set of irreducible matrices it is possible to define the subset of primitive matrices, which verify a more stringent condition than the one in Definition~\ref{def:irreducible_matrices}. We formalize this subset with the following definition.
\begin{definition}[Primitive Non-Negative Matrices]
An irreducible matrix $A$ is \textit{primitive} if and only if there exists a natural number $n_p$ such that $A^{n_p}$ is positive.
\end{definition}
From the previous definition, it is clear that every primitive matrix is also irreducible, but not viceversa. For instance, it is easy to verify that the following matrix is irreducible but not primitive 
\begin{equation}\label{eq:example_A}
    A = 
    \begin{bmatrix}
        0 & 1\\
        1 & 0
    \end{bmatrix}\,.
\end{equation}

The following proposition characterizes primitiveness of an irreducible matrix via a necessary and sufficient condition on its spectrum.
\begin{proposition}\label{prop: rhoA_greater1}
    $A$ is primitive if and only if $\rho(A)$ is greater in magnitude than any other eigenvalue.
\end{proposition}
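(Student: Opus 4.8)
The plan is to prove both implications through the interplay between the spectrum of $A$ and that of its powers, namely the fact that $\Lambda(A^k) = \{\lambda^k : \lambda \in \Lambda(A)\}$ (with the algebraic multiplicities of $A$ adding up under the map $z \mapsto z^k$), together with the \emph{strong} Perron--Frobenius property of a positive matrix: such a matrix possesses a simple eigenvalue equal to its spectral radius which is \emph{strictly} larger in magnitude than every other eigenvalue, and whose left and right eigenvectors are strictly positive. With this in hand, both directions become short.

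For necessity, I would assume $A$ primitive, so $A^{n_p} > 0$ for some natural number $n_p$. Being positive, $A^{n_p}$ is irreducible, so by Proposition~\ref{prop: rho_eigenvalue} its spectral radius $\rho(A^{n_p}) = \rho(A)^{n_p}$ is an eigenvalue, which is simple by Proposition~\ref{prop: irreducible_matrix_simple} and, by the strong property discussed below, strictly dominant. Now suppose for contradiction that $A$ admits an eigenvalue $\lambda \neq \rho(A)$ with $|\lambda| = \rho(A)$. Then $\lambda^{n_p}$ is an eigenvalue of $A^{n_p}$ of modulus $\rho(A)^{n_p} = \rho(A^{n_p})$, and strict dominance forces $\lambda^{n_p} = \rho(A)^{n_p}$. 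But then the two distinct eigenvalues $\rho(A)$ and $\lambda$ of $A$ are both mapped to $\rho(A^{n_p})$ under $z \mapsto z^{n_p}$, so this eigenvalue of $A^{n_p}$ has algebraic multiplicity at least two, contradicting its simplicity. Hence no such $\lambda$ exists and $\rho(A)$ is strictly dominant.

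For sufficiency, I would assume $A$ irreducible with $\rho := \rho(A)$ strictly dominant and exhibit an $n_p$ with $A^{n_p} > 0$. By Proposition~\ref{prop: irreducible_matrix_simple}, $\rho$ is simple, and by the Perron--Frobenius theorem for irreducible matrices (the reference cited for Proposition~\ref{prop: irreducible_matrix_simple}) its right and left eigenvectors $v, w$ may be taken strictly positive. Rescaling to $B = A/\rho$, the eigenvalue $1$ is simple and isolated, as every other eigenvalue of $B$ has modulus strictly below $1$; the spectral decomposition then yields $B^k = \tfrac{v w^\top}{w^\top v} + E_k$ with $\|E_k\| \to 0$ as $k \to \infty$. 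Since $v, w > 0$, the limit $v w^\top /(w^\top v)$ is a positive matrix, so by convergence $B^k$, and hence $A^k = \rho^k B^k$, has all entries positive for every sufficiently large $k$; taking such a $k$ as $n_p$ shows $A$ is primitive.

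The main obstacle, and the step I would handle most carefully, is establishing the \emph{strict} dominance invoked above, since Proposition~\ref{prop: irreducible_matrix_simple} only guarantees simplicity of the maximal-modulus eigenvalues, not their uniqueness. The standard route is a triangle-inequality argument: if $B > 0$ and $Bx = \lambda x$ with $|\lambda| = \rho(B)$, then componentwise $\rho(B)\,|x| = |Bx| \le B|x|$, and a Collatz--Wielandt-type comparison shows this inequality must in fact be an equality, so that $|x|$ is a positive eigenvector for $\rho(B)$; equality in the triangle inequality, combined with $b_{ij} > 0$ for all $i,j$, then forces every component of $x$ to share a common phase, whence $x$ is a scalar multiple of the positive vector $|x|$ and $\lambda = \rho(B)$. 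The remaining ingredients — the behaviour of eigenvalues under powers, the additivity of algebraic multiplicities, and the convergence of $B^k$ for a simple isolated dominant eigenvalue — are then routine.
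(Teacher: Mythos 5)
Your proof is correct. Note, however, that the paper does not actually prove this proposition: it simply points to statement (a) of Theorem 1.7 in its reference on non-negative matrices, so there is no in-paper argument to compare yours against. What you have written is the classical self-contained Perron--Frobenius route, and it hangs together: the necessity direction correctly reduces to the \emph{strict} spectral dominance of a positive matrix combined with the additivity of algebraic multiplicities under the spectral mapping $z \mapsto z^{n_p}$ (so that two distinct peripheral eigenvalues of $A$ would collapse onto a multiple eigenvalue of the positive matrix $A^{n_p}$, contradicting Proposition~\ref{prop: irreducible_matrix_simple}); the sufficiency direction is the standard power-convergence argument $B^k \to v w^{\top}/(w^{\top}v) > 0$ for $B = A/\rho$; and you correctly identify, and supply, the one non-trivial ingredient neither of the paper's quoted propositions gives you, namely that a positive matrix has no peripheral eigenvalue other than its spectral radius (your triangle-inequality/Collatz--Wielandt argument is the standard and correct way to get this without circularity). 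Two small points worth making explicit if you write this up: the statement implicitly assumes $A$ irreducible, since the paper only defines primitivity for irreducible matrices (and the sufficiency direction genuinely needs irreducibility to get strictly positive left and right Perron vectors); and the rescaling $B = A/\rho$ requires $\rho(A) > 0$, which holds for every irreducible non-negative matrix under the paper's Definition~\ref{def:irreducible_matrices} but should be said.
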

We refer to the proof of statement (a) in Theorem 1.7 in~\cite[Chapter 2]{nonnegative_matrices} for a proof of Proposition~\ref{prop: rhoA_greater1}.

Irreducibility and primitiveness have an intuitive graph theoretical interpretation. In particular, denoting with $G(A)$ the graph associated with the non-negative matrix $A$, we can conclude that $A$ is irreducible if and only if $G(A)$ is strongly connected, \textit{i.e.}, there exists a finite sequence of edges which connects any two nodes in the graph. Primitiveness instead translates into the stronger requirement of reachability of any node from any other node after a sequence of exactly $n_p$ edges.

We conclude this brief recap on non-negative matrices and some of their fundamental properties with the following definition, which formalizes the \textit{index of cyclicity} of an irreducible matrix.
\begin{definition}[Index of Cyclicity]
    Let $A$ be an irreducible matrix. We call \textit{index of cyclicity} of $A$, and we denote it with $h(A)$, the number of eigenvalues of $A$ with modulus equal to $\rho(A)$.
\end{definition}
Going back to~\eqref{eq:example_A} as example, $h(A) = 2$ since $\Lambda(A) = \left\{ +1, \, -1 \right\}$.
In addition, notice that, because of Proposition~\ref{prop: rhoA_greater1}, $h(A) = 1$ if and only if $A$ is primitive.

As mentioned, row-stochastic matrices are an important subset of non-negative matrices. We can therefore deploy the structural properties of the latter to further characterize discounted MDPs with finite spaces. In particular, we refer to any discounted MDP with finite spaces as \textit{general} and distinguish two sub-classes, \textit{ergodic} and \textit{regular} discounted MDPs with finite spaces, based on the structural properties of $P^{\pi}$ for all $\pi \in \Pi$. We formalize this classification with the following definition.

\begin{definition}[Ergodic \& Regular MDPs]
    A discounted MDP with finite spaces is called \textit{ergodic} if and only if the transition probability matrix corresponding to every policy is \textit{irreducible}. If, in addition, the transition probability matrix corresponding to every policy is \textit{primitive}, then it is called \textit{regular}.
\end{definition}
A similar principle for the classification of finite MDPs is adopted by Puterman in~\cite[Chapter 8]{puterman_mdp}.

By relying on the discussed properties of irreducible matrices, we can conclude that, for ergodic MDPs, $P^{\pi}$ has $h(P^{\pi}) \geq 1$ simple eigenvalues with modulus 1 for all $\pi\in \Pi$, while all the other eigenvalues are contained strictly inside the unitary circle in the complex plane. In addition, $h(P^{\pi}) = 1$ for all $\pi\in \Pi$ if and only if the MDP is regular. To keep the notation compact, in the following we will use $h_{\pi}$ in place of $h(P^{\pi})$.
\subsection{Dynamic Programming}
Dynamic programming (DP) comprises the methods to solve the Bellman equations. In particular, from now on we refer to the solution of~\eqref{eq:Bellman_eq_pi} for a given policy $\pi\in\Pi$ as \textit{policy evaluation}. In general terms, we can recognize three fundamental DP methods: value iteration (VI), policy iteration (PI) and the linear programming approach~\cite[Chapter 2]{DB_book}. In this work we only focus on the first two methods and some of their variants. 

VI, also known as the \textit{successive approximation method}, is an iterative method to solve the Bellman equations. In particular, VI is based on repeated applications of the Bellman operators starting from an arbitrary cost-vector $V_0 \in \mathbb{R}^n$, where the operator in~\eqref{eq:Tpi_operator} is used to solve~\eqref{eq:Bellman_eq_pi} and the operator in~\eqref{eq:T_operator} to solve~\eqref{eq:Bellman_eq_pi*}. 
In the first case, we refer to it as VI \textit{for policy evaluation}. Algorithm~\ref{alg:VI} provides an algorithmic description of VI. The following proposition characterizes the convergence properties of VI and VI for policy evaluation when the infinity-norm is considered to measure the distance of the generated iterates from the solution.

\begin{algorithm}
\caption{Value Iteration}\label{alg:VI}
\begin{algorithmic}[1]
\Require $V_0 \in \mathbb{R}^n,\, \textit{tol}>0$
\State $k \leftarrow 0$
\State $V_1 \leftarrow TV_0$
\While {$\Vert V_{k+1} - V_k \Vert_{\infty} > tol$}
        \State $k \leftarrow k+1$
        \State $V_{k+1} \leftarrow TV_k$
\EndWhile
\end{algorithmic}
\end{algorithm}

\begin{proposition}\label{prop: convergence_VI}
    Consider a generic infinite-horizon discounted MDP with finite spaces and any policy $\pi\in \Pi$ and let $\left\{ V_k\right\}$ and $\left\{ V^{\pi}_k\right\}$ denote the sequence generated by VI and VI for policy evaluation, respectively. Then for any $k\geq 0$
    \vspace{-0.9cm}
    \begin{multicols}{2}
      \begin{equation}
        \Vert V_{k+1} - V^* \Vert_{\infty} \leq \gamma\Vert V_{k} - V^* \Vert_{\infty}
      \end{equation}\break
      \begin{equation}
        \Vert V^{\pi}_{k+1} - V^{\pi} \Vert_{\infty} \leq \gamma\Vert V^{\pi}_{k} - V^{\pi} \Vert_{\infty}
      \end{equation}
    \end{multicols}
    \vspace{-1cm}

\end{proposition}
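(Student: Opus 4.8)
The plan is to obtain both inequalities as immediate corollaries of the $\gamma$-contractivity property, specialized to a single operator application, combined with the fixed-point characterizations of $V^*$ and $V^{\pi}$. First I would recall that, since $\mathcal{S}$ is finite, the infinity norm of a function on $\mathcal{S}$ is exactly $\max_{i\in\mathcal{S}}\vert\cdot\vert$, so that Proposition~\ref{prop: gamma_contractivity} evaluated at $k=1$ reads $\Vert TV - TV'\Vert_{\infty} \leq \gamma\,\Vert V - V'\Vert_{\infty}$ for arbitrary $V,V'$. The two ingredients I need beyond this are the fixed-point identities $V^* = TV^*$ and $V^{\pi} = T_{\pi}V^{\pi}$, which follow from the compact form of the Bellman equations discussed around~\eqref{eq:compact_notation} and~\eqref{eq:Bellman_eq_pi}.

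For the first inequality I would instantiate the single-step contractivity bound with $V = V_k$ and $V' = V^*$. By the definition of VI in Algorithm~\ref{alg:VI} we have $V_{k+1} = TV_k$, while optimality gives $V^* = TV^*$; hence the left-hand side $\Vert TV_k - TV^*\Vert_{\infty}$ is precisely $\Vert V_{k+1} - V^*\Vert_{\infty}$, and substituting yields $\Vert V_{k+1} - V^*\Vert_{\infty} \leq \gamma\,\Vert V_k - V^*\Vert_{\infty}$. For the second inequality I would repeat the argument verbatim with the $T_{\pi}$-operator in place of $T$, invoking the $\gamma$-contractivity of $T_{\pi}$ (the analog of Proposition~\ref{prop: gamma_contractivity} for the $T_{\pi}$-operator referenced in the text) together with $V^{\pi} = T_{\pi}V^{\pi}$. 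Since VI for policy evaluation produces $V^{\pi}_{k+1} = T_{\pi}V^{\pi}_{k}$, the same substitution gives $\Vert V^{\pi}_{k+1} - V^{\pi}\Vert_{\infty} \leq \gamma\,\Vert V^{\pi}_{k} - V^{\pi}\Vert_{\infty}$.

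There is no substantive obstacle in this argument; the statement is a direct consequence of contractivity. The only point requiring a little care is to use the \emph{one-step} ($k=1$) version of the contractivity proposition and to rewrite $TV^*$ as $V^*$ (respectively $T_{\pi}V^{\pi}$ as $V^{\pi}$) via the fixed-point property, rather than unrolling the iterates back to $V_0$ through the $k$-step bound. The latter route would instead establish the stronger but here-unneeded geometric estimate $\Vert V_k - V^*\Vert_{\infty} \leq \gamma^{k}\,\Vert V_0 - V^*\Vert_{\infty}$, from which the per-step contraction claimed in the proposition follows as a special case.
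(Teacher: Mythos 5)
Your proof is correct and is essentially identical to the paper's: both apply the one-step instance of Proposition~\ref{prop: gamma_contractivity} together with the identities $V_{k+1}=TV_k$ and $V^*=TV^*$ (respectively $V^{\pi}_{k+1}=T_{\pi}V^{\pi}_k$ and $V^{\pi}=T_{\pi}V^{\pi}$). No gaps; nothing further is needed.
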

\begin{proof}
We report the proof for VI. Analogous steps can be done to prove the result for VI for policy evaluation.

We consider an arbitrary starting point $V_0 \in \mathbb{R}^n$ and we prove the final result by upper-bounding the infinity-norm of the distance between the $(k+1)$-iterate and the solution as follow
    \begin{equation}
        \begin{aligned}
            \Vert V_{k+1} - V^* \Vert_{\infty}  &=  \Vert TV_k - TV^* \Vert_{\infty}\\
            &\leq\gamma\,\Vert V_k - V^* \Vert_{\infty} \quad k=0,1,\dots\,,
        \end{aligned}
    \end{equation}
    where the equality is obtained by using the definition of VI iterate and the fact that the $T$-operator has $V^*$ as unique fixed-point, and the inequality follows from Proposition~\ref{prop: gamma_contractivity}.
\end{proof}
Extensive numerical evidence shows that the results of Proposition~\ref{prop: convergence_VI} are tight, which translates in an exacerbating slow convergence for values of $\gamma$ close to 1.
By using monotonic error bounds in some cases it is possible to improve the convergence rate of VI to $\gamma \,\vert \lambda_2 \vert$, where $\lambda_2$ denotes the subdominant eigenvalue of $P^{\pi^*}$~\cite[Chapter 2]{DB_book}. Unfortunately, there are often situations where $\vert \lambda_2 \vert = 1$, \textit{e.g.}, ergodic but not regular MDPs with $h(P^{\pi^*})>1$, which results in VI with monotonic error bounds having the same contraction as the original VI.  

On a final note, VI only produces a sequence of cost-iterates, but it is possible to produce a sequence of policies too by extracting at every iteration a greedy policy associated with the current cost-iterate.

Equation~\eqref{eq:Bellman_eq_pi*} can also be solved with PI, an iterative method that alternates two steps: policy evaluation and policy improvement. In particular, we start by extracting a greedy policy associated with an arbitrary cost $\tilde{V}\in\mathbb{R}^n$ and, until convergence, we first evaluate the cost associated with the current policy (\textit{policy evaluation step}) and use this value to update the cost-iterate, and then we update the current policy with a greedy policy associated with the current cost-iterate (\textit{policy improvement step}). See Algorithm~\ref{alg:PI} for an algorithmic description of PI. Even if PI with exact arithmetic is guaranteed to converge in a finite-number of iterations, the upper bound on the number of iterations is exponential in the number of states and it is therefore important to characterize its improvement-per-iteration. The following proposition characterizes the convergence properties of PI.
\begin{proposition}\label{prop: PI_convergence}
    Consider a generic infinite-horizon discounted MDP with finite spaces and let $\left\{ V_k\right\}$ denote the sequence of iterates generated by PI. Then for any $k\geq 0$
    \begin{equation}\label{eq: PI_improvement_per_iteration}
        \Vert V_{k+1} - V^* \Vert_{\infty} \leq \gamma\Vert V_k - V^* \Vert_{\infty}\,.
    \end{equation}
    In addition, PI with exact arithmetic converges to the optimal policy in at most $m^n$ iterations.
\end{proposition}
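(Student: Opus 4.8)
The plan is to treat the two claims separately: first establish the linear improvement-per-iteration estimate~\eqref{eq: PI_improvement_per_iteration}, then deduce finite termination from it together with a counting argument over the finite policy set. Throughout I denote by $\pi_k$ the policy available at the start of iteration $k$, so that the policy evaluation step produces $V_k = V^{\pi_k}$, the unique fixed-point of $T_{\pi_k}$, and the policy improvement step produces $\pi_{k+1} = \text{GreedyPolicy}(V_k)$. The first key step is to record that, by the definition of greedy policy and of the $T$-operator in~\eqref{eq:T_operator}, $T_{\pi_{k+1}} V_k = T V_k$, while feasibility of $\pi_k$ in the minimization together with $V_k = T_{\pi_k} V_k$ gives $T V_k \leq T_{\pi_k} V_k = V_k$. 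Hence $T_{\pi_{k+1}} V_k \leq V_k$ componentwise. Applying $T_{\pi_{k+1}}$ repeatedly and using its monotonicity, the sequence $\{ T_{\pi_{k+1}}^j V_k \}_j$ is nonincreasing and converges to the fixed-point $V^{\pi_{k+1}} = V_{k+1}$, which yields the two facts I need: the monotone improvement $V_{k+1} \leq V_k$ and the bound $V_{k+1} \leq T_{\pi_{k+1}} V_k = T V_k$.

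The second step is a sandwich argument. Since $V^*$ is the optimal cost, $V_{k+1} = V^{\pi_{k+1}} \geq V^*$, so combining with the previous bound gives the componentwise chain $0 \leq V_{k+1} - V^* \leq T V_k - V^* = T V_k - T V^*$. Because the extreme terms are nonnegative vectors, componentwise domination transfers to the infinity-norm, and $\gamma$-contractivity of $T$ (Proposition~\ref{prop: gamma_contractivity} with $k=1$) closes the argument:
\[
\Vert V_{k+1} - V^* \Vert_\infty \leq \Vert T V_k - T V^* \Vert_\infty \leq \gamma \Vert V_k - V^* \Vert_\infty .
\]

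For finite termination, I would first observe that $\Pi$ contains exactly $m^n$ policies. The monotone improvement $V_{k+1} \leq V_k$ established above is the engine: I would show that the improvement is strict unless optimality has been reached, namely that $V_{k+1} = V_k$ forces $V_k = V^*$. Indeed, $V_{k+1} = V_k$ combined with $V_{k+1} = T_{\pi_{k+1}} V_{k+1} = T_{\pi_{k+1}} V_k = T V_k$ shows that $V_k$ satisfies the Bellman optimality equation~\eqref{eq:Bellman_eq_pi*}, and uniqueness of the fixed-point of $T$ gives $V_k = V^*$. Consequently, as long as PI has not converged the value vectors $V^{\pi_0}, V^{\pi_1}, \dots$ are strictly decreasing and hence pairwise distinct; since each policy determines its value uniquely, the visited policies are pairwise distinct, and a set of cardinality $m^n$ can supply at most $m^n$ distinct policies, forcing convergence within $m^n$ iterations.

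The main obstacle I anticipate is the careful bookkeeping in the first step: one must correctly chain the greedy identity $T_{\pi_{k+1}} V_k = T V_k$, the fixed-point relation $V_k = T_{\pi_k} V_k$, and monotonicity of $T_{\pi_{k+1}}$ to pass from the single inequality $T_{\pi_{k+1}} V_k \leq V_k$ to the limit bound $V_{k+1} \leq T V_k$, and then to verify that the sandwich $0 \leq V_{k+1} - V^* \leq T V_k - T V^*$ legitimately transfers to the infinity-norm, which relies on all three vectors being nonnegative componentwise.
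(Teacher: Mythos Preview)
Your proposal is correct and follows essentially the same approach as the paper: both derive $V_{k+1}\le T_{\pi_{k+1}}V_k=TV_k$ from monotonicity and the greedy identity, sandwich with $V^*\le V_{k+1}$, and invoke $\gamma$-contractivity, while finite termination comes from monotone improvement together with $|\Pi|=m^n$. Your finite-termination argument is in fact slightly more explicit than the paper's (you spell out that $V_{k+1}=V_k$ forces $V_k=V^*$, hence the visited policies are pairwise distinct), whereas the paper simply cites \cite[Proposition 2.3.1]{DB_book} for $V_{k+1}\le V_k$ and then appeals directly to the cardinality of~$\Pi$.
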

\begin{proof}
    We start by proving the second statement. By exploiting the definition of greedy policy and the properties of the Bellman operators, it is possible to show that $V_{k+1} \leq V_k$ for $k=0,1,\dots$. For a step-by-step proof we refer to ~\cite[Proposition 2.3.1, Chapter 2]{DB_book}. The final result follows directly by considering that there only exists $m^n$ policies.
    
    Since $V_{k+1} \leq V_k$ for $k=0,1,\dots$, by exploiting the monotonicity of $T_{\pi_{k+1}}$ and the fact that $V_{k+1}$ is its unique fixed-point, we obtain the following upper-bound on $V_{k+1}$
    \begin{equation}
        \begin{aligned}
            V_{k+1} &= T_{\pi_{k+1}} V_{k+1}
            \\
            & \leq T_{\pi_{k+1}} V_{k}\\
            &= T V_k\,,
        \end{aligned}
    \end{equation}
    where the last equality follows from the fact that $\pi_{k+1}$ is a greedy policy associated to $V_k$, therefore $T_{\pi_{k+1}}V_k = T V_k$. Considering the previous derivations and since, by definition, $V^*\leq V^{\pi}$ for any $\pi\in\Pi$, we obtain that
    \begin{equation}\label{eq: positive_difference}
        0 \leq V_{k+1} - V^* \leq TV_{k} - TV^* \,.
    \end{equation}
    Finally, by taking the infinity-norm on both sides of~\eqref{eq: positive_difference} and considering that $V^* = TV^*$, we obtain the final result
    \begin{equation}
        \Vert V_{k+1} - V^* \Vert_{\infty} \leq \Vert TV_k  - TV^* \Vert_{\infty} \leq \gamma \, \Vert V_k - V^*\Vert_{\infty}\,,
    \end{equation}
    where the last inequality follows from Proposition~\ref{prop: gamma_contractivity}.
\end{proof}

\begin{algorithm}
\caption{Policy Iteration}\label{alg:PI}
\begin{algorithmic}[1]
\Require $\tilde{V} \in \mathbb{R}^n,\, \textit{tol}>0$
\State $k \leftarrow 0$
\State $\pi_0 \leftarrow \text{GreedyPolicy}(\tilde{V})$
\State $V_0 \leftarrow V^{\pi_0}$
\State $\pi_1 \leftarrow \text{GreedyPolicy}(V_0)$
\State $V_1 \leftarrow V^{\pi_1}$
\While {$\Vert V_{k+1} - V_{k} \Vert_{\infty} > tol$}
        \State $k \leftarrow k+1$
        \State $\pi_{k+1} \leftarrow \text{GreedyPolicy}(V_{k})$
        \State $V_{k+1} \leftarrow V^{\pi_{k+1}}$
\EndWhile
\end{algorithmic}
\end{algorithm}
Despite the results in Proposition~\ref{prop: PI_convergence}, Figure~\ref{fig:PIvsVI_iterations} and extensive empirical observations suggest that PI enjoys a qualitatively better rate of convergence than VI. These observations are also confirmed by the Newton-based theoretical analysis of the local contraction properties of DP methods, which is discussed in Section~\ref{sec3}. This leads to conclude that, at least locally, the results of Proposition~\ref{prop: PI_convergence} are loose and not able to describe properly the practical convergence behaviour of PI-iterates. 

Regarding the computational complextiy of PI, the policy evaluation (step 3, 5 and 9 in Algorithm~\ref{alg:PI}) comprises the exact solution of an $n$-dimensional linear system. In particular, given a policy $\pi\in\Pi$, $V^{\pi}$ is obtained by solving exactly the following linear system
\begin{equation}\label{eq: linear_system_PE}
    \left( I - \gamma P^{\pi} \right) V^{\pi} = g^{\pi}\,.
\end{equation}
In general, when the number of states is large, the exact solution of~\eqref{eq: linear_system_PE} is computationally expensive, \textit{e.g.}, $\frac{2}{3} n^3 + \mathcal{O}(n^2)$ with LU-factorization. Despite its fast rate of convergence, this results in an overall poor computational scalability of PI, as also confirmed by the results reported in Table~\ref{table: PIvsVI}.

\begin{figure}
    \centering
    \includegraphics[scale=0.6]{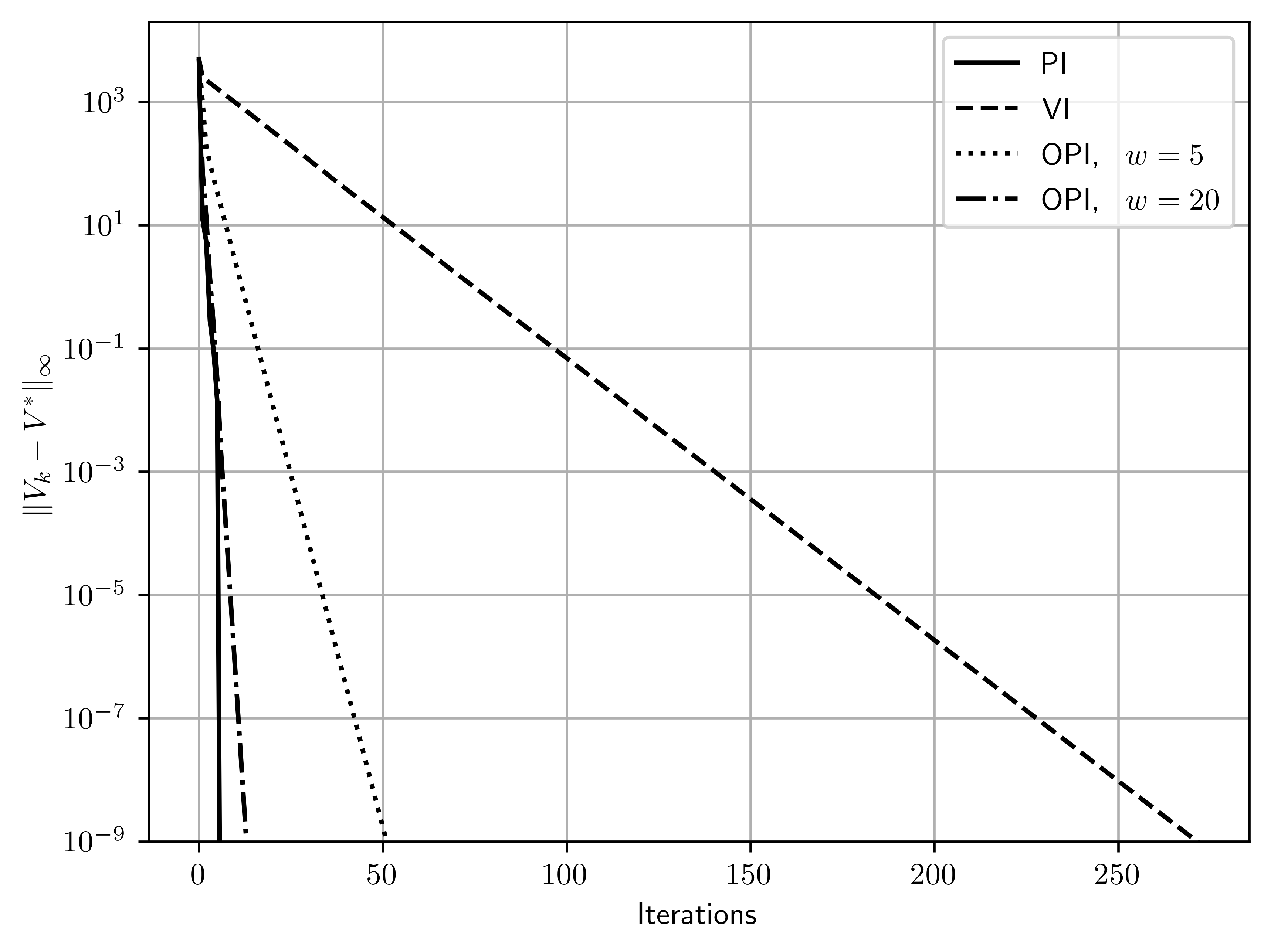}
    \caption{We consider an infinite-horizon discounted MDP with $\gamma=0.9$, $n=10000$ and $m=40$ and we plot the distance in infinity-norm of the iterates from the solution versus number of iterations for policy iteration, value iteration and optimistic policy iteration with different values of inner iterations.}
    \label{fig:PIvsVI_iterations}
\end{figure}

A better trade-off between computational complexity and convergence rate is achieved by \textit{optimistic policy iteration} (OPI)~\cite[Chapter 2]{DB_book}, also known in the literature as \textit{modified policy iteration}~\cite[Chapter 7]{puterman_mdp}. This variant of PI consists in substituting the policy evaluation step with the inexact solution of~\eqref{eq: linear_system_PE} obtained with a finite number $w \in \mathbb{N}$ of iterations of VI for policy evaluation starting from the current iterate. This results in the following cost-iterates 
\begin{equation}
    V_{k+1} = T_{\pi_{k+1}}^w V_k\quad k=0,1,\dots\,,
\end{equation}
\vspace{0.05cm}

\noindent where $\pi_{k+1} = \text{GreedyPolicy}(V_k)$ and $V_0\in \mathbb{R}^n$ is the initial iterate.
Clearly, when $w=1$ we recover VI and when $w = \infty$ we recover PI. In practice, this value is adjusted heuristically in order to produce the best trade-off between computational complexity and convergence rate for the specific problem at hand. Indeed, the more inner iterations are run and the faster is the resulting convergence in terms of iterations (see Figure~\ref{fig:PIvsVI_iterations}), but the more computationally expensive is the overall method. The convergence properties of OPI are discussed in the following proposition. 
\begin{proposition}\label{prop: convergence_OPI}
    Consider a general infinite-horizon discounted MDP with finite spaces and let $\left\{ V_k\right\}$ denote the sequence of iterates generated by OPI with $V_0\in\mathbb{R}^n$.
    Then for any $k\geq 0$
    \begin{equation}
        \Vert V_k - V^*  \Vert_{\infty} \leq \gamma^k\Vert V_0 - V^* \Vert_{\infty} + \gamma^k b\,.
    \end{equation}
    In addition, if $TV_0 \leq V_0$, then for any $k\geq 0$
    \begin{equation}
        \Vert V_{k+1} - V^*  \Vert_{\infty} \leq \gamma \Vert V_k - V^* \Vert_{\infty}\,.
    \end{equation}
\end{proposition}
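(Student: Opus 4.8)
The plan is to establish the conditional (monotone) estimate first---the second displayed inequality---and then bootstrap it to the general bound by a shift argument; the monotone estimate is the structural workhorse, while the general one is obtained by reducing to it rather than by unrolling an accumulating error recursion.

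For the monotone case, assume $TV_0 \le V_0$. First I would show by induction that this property is preserved, i.e. $TV_k \le V_k$ for all $k$. Writing $\pi_{k+1} = \text{GreedyPolicy}(V_k)$, the greedy identity gives $T_{\pi_{k+1}} V_k = T V_k \le V_k$; monotonicity of the affine operator $T_{\pi_{k+1}}$ then makes the chain $\{T_{\pi_{k+1}}^j V_k\}_j$ nonincreasing, so that $V_{k+1} = T_{\pi_{k+1}}^w V_k \le T_{\pi_{k+1}} V_k = T V_k \le V_k$, and applying $T_{\pi_{k+1}}$ once more yields $T V_{k+1} \le T_{\pi_{k+1}} V_{k+1} \le V_{k+1}$, closing the induction. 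Since the same nonincreasing chain decreases to $V^{\pi_{k+1}} \ge V^*$ from above, we also get $V_{k+1} \ge V^*$. Combining the two bounds gives the componentwise sandwich $0 \le V_{k+1} - V^* \le TV_k - TV^*$, and taking infinity norms together with the $\gamma$-contractivity of $T$ (Proposition~\ref{prop: gamma_contractivity}) produces $\Vert V_{k+1} - V^* \Vert_\infty \le \gamma \Vert V_k - V^* \Vert_\infty$.

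For the general bound I would remove the hypothesis $TV_0\le V_0$ by shifting. Set $c = \Vert TV_0 - V_0 \Vert_\infty/(1-\gamma)$ and $\bar V_0 = V_0 + c\,e$; the $T$-analogue of shift-invariance (Proposition~\ref{prop: shift_invariance}, using $P^\pi e = e$) gives $T\bar V_0 = TV_0 + \gamma c\,e$, so $T\bar V_0 \le \bar V_0$ precisely because $c(1-\gamma) \ge \max_i (TV_0 - V_0)_i$. The key observation is that OPI is shift-equivariant: a greedy policy is invariant under adding a constant (the shift cancels inside the $\arg\min$), and $T_\pi^w(V + r e) = T_\pi^w V + \gamma^w r\,e$; hence the OPI run started from $\bar V_0$ uses the identical policy sequence and satisfies $\bar V_k = V_k + \gamma^{wk} c\,e$ for all $k$. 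Applying the monotone bound to $\{\bar V_k\}$ gives $\Vert \bar V_k - V^* \Vert_\infty \le \gamma^k \Vert \bar V_0 - V^* \Vert_\infty \le \gamma^k(\Vert V_0 - V^*\Vert_\infty + c)$, and transferring back via $\Vert V_k - V^* \Vert_\infty \le \Vert \bar V_k - V^* \Vert_\infty + \gamma^{wk} c$ with $\gamma^{wk}\le\gamma^k$ (since $w\ge 1$) yields $\Vert V_k - V^* \Vert_\infty \le \gamma^k \Vert V_0 - V^* \Vert_\infty + \gamma^k b$ with $b = 2c$, which is of the claimed form.

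The routine part is the monotone induction; the step I expect to be the crux is recognizing that the whole OPI map---greedy extraction followed by $w$ applications of $T_{\pi_{k+1}}$---commutes with constant shifts, so that an arbitrary initialization can be bracketed by a monotone one at the cost of only a $\gamma^k$-decaying offset. This is exactly what forces the residual term to scale like $\gamma^k b$ rather than a non-vanishing constant, and it identifies $b$ as proportional to $\Vert TV_0 - V_0 \Vert_\infty/(1-\gamma)$; I would reconcile the precise constant factor in $b$ with the paper's definition during the final bookkeeping.
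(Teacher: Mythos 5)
Your proof is correct and follows essentially the same route as the paper's: both reduce a general initialization to the monotone case $TV_0\le V_0$ by shifting $V_0$ by a multiple of the unit function $e$, exploit the shift-equivariance of the OPI map (same greedy policies, $\bar V_k = V_k + \gamma^{kw}ce$), and extract the contraction from the componentwise sandwich $0\le V_{k+1}-V^*\le TV_k - TV^*$. The only substantive differences are that you prove the monotone inequalities by an explicit induction where the paper invokes Proposition 2.3.2 of Bertsekas, and that your transfer back from the shifted sequence via the triangle inequality, together with using $\Vert TV_0-V_0\Vert_\infty$ in place of the positive part $\max_{i}(TV_0-V_0)(i)$, yields the looser constant $b = 2\Vert TV_0-V_0\Vert_\infty/(1-\gamma)$ rather than the paper's $b=\frac{1}{1-\gamma}\max_{i}(TV_0-V_0)(i)$, which the paper obtains by carrying the two-sided componentwise bounds all the way to the end before passing to norms.
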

\vspace{-0.5cm}
\begin{proof}
    We start by defining the auxiliary bar-iterates, where $\bar{V}_0 = V_0 + be$ and $b \in \mathbb{R}_{+}$. In particular, we set $b=0$ if $TV_0 \leq V_0$ and $b = \frac{1}{(1-\gamma)}\,\max_{i \in \mathcal{S}}\, (TV_0 - V_0)(i)$ otherwise. From the definition of $b$ it follows directly that $T\bar{V}_0 \leq \bar{V}_0$.
    Indeed, if $b=0$ then the bar-quantities coincide with the original iterates and $TV_0 \leq V_0$ by assumption. If $b>0$, then $TV_0 - V_0 \leq (1-\gamma) be$. By adding $\gamma be$ on both sides of the previous inequality and by exploiting Proposition~\ref{prop: shift_invariance} and the definition of the bar-quantities, we obtain that $TV_0 + \gamma be = T\bar{V}_0 \leq V_0 + be = \bar{V}_0$.
    Since for any $k \geq 0$ $\bar{V}_{k} = V_k + \gamma^{kw} be$, the bar-quantities produce the same sequence of greedy-policies $\left\{ \pi_k\right\}$ as the original iterates. We can therefore proceed with studying the convergence in terms of the bar-iterates. 

    By following the same steps as in the proof of Proposition 2.3.2 in~\cite[Chapter 2]{DB_book}, we are able to conclude that the following inequalities hold for any $k\geq 0$
    \vspace{-0.9cm}
    \begin{multicols}{2}
      \begin{equation}\label{eq:opi_ineq0}
      \bar{V}_{k+1} \leq T\bar{V}_k
      \end{equation}\break
      \begin{equation}
      \label{eq:opi_inequalities}
        V^* \leq \bar{V}_{k} \leq T^k \bar{V}_0\,.
      \end{equation}
    \end{multicols}
    
By using the fact that $\bar{V}_k = V_k + \gamma^{kw}be$ and re-arranging the terms in~\eqref{eq:opi_inequalities}, we obtain that for any $k\geq 0$ the following inequalities hold
\begin{equation}
    \begin{aligned}
    -\gamma^{kw}be  &\leq V_k - V^* \leq T^k V_0 - V^* + \gamma^kbe-\gamma^{kw}be\,. 
    \end{aligned}
\end{equation}
Since $\gamma^k be \geq \gamma^{kw}be\geq 0$ for any $w\in\mathbb{N}$ and by exploiting the fact that $V^*$ is the unique fixed-point of the $T$-operator, we obtain that the following inequalities hold for any $k\geq 0$ 
\begin{equation}\label{eq: sandwitch_OPI_step1}
    \begin{aligned}
    -\gamma^k\Vert V_0 - V^* \Vert_{\infty} e-\gamma^{k}be  &\leq V_k - V^* \leq T^k V_0 - T^k V^* + \gamma^k be\,. 
    \end{aligned}
\end{equation}
Starting from~\eqref{eq: sandwitch_OPI_step1} and exploiting the fact that $T^k V_0 - T^k V^* \leq \Vert T^k V_0 - T^k V^* \Vert_{\infty}e \leq \gamma^k \Vert V_k - V^*\Vert_{\infty}e$, where the last inequality follows from Proposition~\ref{prop: gamma_contractivity}, and since $\Vert V_0 - V^*\Vert_{\infty} \geq 0$, we obtain that the following inequalities hold for any $k\geq 0$
\begin{equation}\label{eq: sandwitch_OPI}
    \begin{aligned}
    -\gamma^k\Vert V_0 - V^* \Vert_{\infty} e-\gamma^{k}be  &\leq V_k - V^* \leq \gamma^k\Vert V_0 - V^* \Vert_{\infty} e + \gamma^k be\,. 
    \end{aligned}
\end{equation}
From~\eqref{eq: sandwitch_OPI}, we can conclude the final result
\begin{equation}
    \Vert V_k - V^* \Vert_{\infty} \leq \gamma^k\Vert V_0 - V^* \Vert_{\infty}  + \gamma^k b \,.
\end{equation}
In addition, if $b=0$, then, from~\eqref{eq:opi_ineq0} and~\eqref{eq:opi_inequalities}, we can conclude that for any $k\geq 0$ the following holds
\begin{equation}
    0 \leq V_{k+1} - V^* \leq TV_{k} - V^* = TV_{k} - TV^*\,,
\end{equation}
where the equality follows from the fact that $V^*$ is the unique fixed-point of the $T$-operator.
By switching to the infinity-norm and exploiting Proposition~\ref{prop: gamma_contractivity}, we obtain the final result
\begin{equation}
    \Vert V_{k+1} - V^* \Vert_{\infty} \leq \gamma \Vert V_{k} - V^* \Vert_{\infty}\,.
\end{equation}
\end{proof}
Results on the convergence of OPI can also be found in~\cite[Proposition 2.3.2]{DB_book}. Our results give also an upper-bound on the improvement-per-iteration.
Unfortunately, Proposition~\ref{prop: convergence_OPI} as well as Proposition 2.3.2 in~\cite{DB_book} fail in capturing the improvement in terms of convergence rate that is observed when increasing the number of inner iterations $w$, and therefore the level of accuracy that is used to solve the policy evaluation tasks.

\begin{table}[h]
\begin{center}
\begin{minipage}{\textwidth}
\caption{We consider an infinite-horizon discounted MDPs with $\gamma=0.9$, $m=40$ and different sizes of the state space and we report the total number of iterations to reach an accuracy of $10^{-9}$ and the cost-per-iteration in terms of CPU time for policy iteration and value iteration. The benchmarks are run on Intel(R) Core(TM) i7-10750H CPU @ 2.60GHz
enforcing single core execution.}\label{table: PIvsVI}
\begin{tabular*}{\textwidth}{@{\extracolsep{\fill}}lcccccc@{\extracolsep{\fill}}}
\toprule%
& \multicolumn{2}{@{}c@{}}{Policy Iteration} & \multicolumn{2}{@{}c@{}}{Value Iteration} \\\cmidrule{2-3}\cmidrule{4-5}%
$\gamma=0.9$ & Iterations & CPU Time & Iterations & CPU Time \\
\midrule
$n=100$    & 6  & $3.67 \times 10^{-4}$  & 270 & $1.96 \times 10^{-4}$  \\
$n=1000$   & 6  & $4.55 \times 10^{-2}$   & 271  & $1.81 \times 10^{-2}$\\
$n=10000$  & 7  & $17.07$                  & 273  & $1.84$ \\
\botrule
\end{tabular*}
\end{minipage}
\end{center}
\end{table}

\section{The Bellman Residual Function}\label{sec3}

Solving the Bellman equation~\eqref{eq:Bellman_eq_pi*} can be recast as a nonlinear non-smooth root-finding problem, where the residual function is the so-called \textit{Bellman residual function} $r:\mathbb{R}^n \rightarrow \mathbb{R}^n$, with $r(V) = V - TV$~\cite{gargiani_2022}. Because of the relation between $T$ and $T_{\pi}$, we can also rewrite the Bellman residual function as follows
\begin{equation}\label{eq: bellman_residual_function}
    r(V) = V - \min_{\pi \in \Pi}\left\{ T_{\pi}V \right\} = V - \min_{\pi\in \Pi} \left\{ g^{\pi} + \gamma P^{\pi}V\right\}\,.
\end{equation}
The function in~\eqref{eq: bellman_residual_function} is piecewise affine with at most $\vert \Pi \vert$ selection functions. In particular, we denote with $\tilde{\Pi}\subseteq \Pi$ the non-empty subset of policies that are associated to the selection functions of $r$, \textit{i.e.} for any $\pi \in \tilde{\Pi}$ there exists $V\in \mathbb{R}^n$ such that $r(V) = V - T_{\pi}V$, and we use $r^{\pi}(V) = V - T_{\pi}V$ for any $\pi\in\Pi$. For a general piecewise affine function, it is always possible to add extra \textit{spurious} selection functions to the minimal set of effective ones~\cite[Chapter 4]{facchinei_bookI}. The following assumption excludes the presence of spurious selection functions by requiring all the policies in $\tilde{\Pi}$ to be associated to effective selection functions.
\begin{assumption}\label{assumption: spurious_assumption}
Consider a discounted MDP with finite spaces.
Let $\tilde{\Pi}\subseteq \Pi$ denote the non-empty subset of policies associated to the selection functions of the Bellman residual function. We assume that for any $\pi \in \tilde{\Pi}$ then
\begin{equation}
    \text{int} \left(\{ V \in \mathbb{R}^n \, \, \, \text{s.t.} \, \, \, r(V) = V - T_{\pi}V \} \right) \neq \emptyset \,.
\end{equation}
\end{assumption}
Under Assumption~\ref{assumption: spurious_assumption}, the generalized Jacobian of~\eqref{eq: bellman_residual_function} at $V\in \mathbb{R}^n$ is given by the following set
\begin{equation}
    \partial r(V) = \text{Conv}\left(\{ I - \gamma P^{\pi}, \,\text{where }\pi\in \text{GreedyPolicy}(V)\}\right)\,.
\end{equation}
Since for any $V\in\mathbb{R}^n$ every $J\in \partial r(V)$ is non-singular,~\eqref{eq: bellman_residual_function} is globally CD-regular~\cite[Proposition 2]{gargiani_2022}. In addition, as shown in~\cite[Lemma 3]{gargiani_2023}, for any $\pi \in \Pi$  
\begin{equation}
\Vert (I - \gamma P^{\pi})^{-1}\Vert_{\infty} \leq 1/(1 - \gamma)\,. 
\end{equation}
Because of its piecewise affine structure, the Bellman residual function is globally $(1+\gamma)$-Lipschitz continuous in the infinity-norm~\cite[Lemma 2]{gargiani_2023} and strongly semismooth everywhere~\cite[Proposition 7.4.7]{facchinei_bookII}. In particular, we are interested in strong semismoothness at the root $V^*$.
\begin{lemma}\label{lemma: strong_semismoothness}
Consider a general infinite-horizon discounted MDP with finite spaces and let $r:\mathbb{R}^n\rightarrow \mathbb{R}^n$ denote its associated Bellman residual function as defined in~\eqref{eq: bellman_residual_function}. Then there exists $\delta > 0$ such that, for any $C \geq  \frac{2R}{(1-\gamma)\delta^2}$, the following inequality holds for all $V\in\mathbb{R}^n$ 
\begin{equation}\label{eq: strong_semismoothenss}
    \Vert r(V) - r(V^*) - \left( I - \gamma P^{\pi}\right)\left( V - V^* \right) \Vert_{\infty} \leq C\,\Vert V - V^* \Vert^2_{\infty}\,,
\end{equation}
where $\pi \in \text{GreedyPolicy}(V)$. We call $C_{\min} = \frac{2R}{(1-\gamma)\delta^2}$ the global strong semismoothness constant of $r$ at $V^*$.
\end{lemma}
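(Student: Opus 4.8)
The plan is to reduce the bracketed expression in~\eqref{eq: strong_semismoothenss} to a single term measuring the suboptimality of the greedy policy $\pi$ at $V^*$, and then to exploit the piecewise affine structure of $r$ to show that this term either vanishes or is dominated by the quadratic right-hand side. First I would rewrite the left-hand side. Since $\pi\in\text{GreedyPolicy}(V)$ we have $r(V)=r^{\pi}(V)=(I-\gamma P^{\pi})V-g^{\pi}$, and since $V^*$ is the root, $r(V^*)=0$. Substituting these and expanding $(I-\gamma P^{\pi})(V-V^*)$, the terms in $V$ cancel and the left-hand side collapses to
\[
\Vert (I-\gamma P^{\pi})V^* - g^{\pi}\Vert_{\infty} = \Vert r^{\pi}(V^*)\Vert_{\infty} = \Vert V^* - T_{\pi}V^*\Vert_{\infty}\,,
\]
which no longer depends on $V$ except through the choice of $\pi\in\text{GreedyPolicy}(V)$.

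The core of the argument is then to fix a radius $\delta>0$ separating two regimes. For each policy $\pi$ the region $R_{\pi}=\{V : \pi\in\text{GreedyPolicy}(V)\}$ is a closed polyhedron, since it is cut out by the finitely many componentwise affine inequalities $T_{\pi}V\leq T_{\pi'}V$ for $\pi'\in\Pi$. For every $\pi\notin\text{GreedyPolicy}(V^*)$ the root $V^*$ lies outside the closed set $R_{\pi}$, hence its infinity-norm distance to $R_{\pi}$ is strictly positive; taking the minimum over the finitely many such policies produces $\delta>0$ with the property that $\mathcal{B}(V^*,\delta)$ meets only those regions $R_{\pi}$ with $\pi\in\text{GreedyPolicy}(V^*)$. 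Consequently, for any $V\in\mathcal{B}(V^*,\delta)$ and any greedy $\pi$ at $V$ we have $\pi\in\text{GreedyPolicy}(V^*)$, so that $T_{\pi}V^*=TV^*=V^*$ and $r^{\pi}(V^*)=0$; in this regime the left-hand side is exactly zero and the inequality holds for every $C\geq 0$.

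For the complementary regime $\Vert V-V^*\Vert_{\infty}\geq\delta$, I would bound $\Vert r^{\pi}(V^*)\Vert_{\infty}$ by a constant independent of $V$ and $\pi$. Combining $\Vert g^{\pi}\Vert_{\infty}\leq R$, the resolvent bound $\Vert (I-\gamma P^{\pi})^{-1}\Vert_{\infty}\leq 1/(1-\gamma)$, and $V^*=(I-\gamma P^{\pi^*})^{-1}g^{\pi^*}$ gives $\Vert V^*\Vert_{\infty}\leq R/(1-\gamma)$; since $P^{\pi}$ is row-stochastic, $\Vert P^{\pi}V^*\Vert_{\infty}\leq\Vert V^*\Vert_{\infty}$, and therefore
\[
\Vert r^{\pi}(V^*)\Vert_{\infty}\leq (1+\gamma)\Vert V^*\Vert_{\infty}+R\leq \frac{2R}{1-\gamma}\,.
\]
As $\Vert V-V^*\Vert_{\infty}^2\geq\delta^2$ here, choosing $C\geq \frac{2R}{(1-\gamma)\delta^2}$ yields $C\Vert V-V^*\Vert_{\infty}^2\geq \frac{2R}{1-\gamma}\geq \Vert r^{\pi}(V^*)\Vert_{\infty}$. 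Combining the two regimes establishes the claim with $C_{\min}=\frac{2R}{(1-\gamma)\delta^2}$.

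The main obstacle is the construction of the separating radius $\delta$ in the second step: it rests on the finiteness of $\Pi$ and on each $R_{\pi}$ being closed, so that $V^*$ has strictly positive distance from every region not containing it. Assumption~\ref{assumption: spurious_assumption} is what guarantees that these regions are the genuine pieces of the piecewise affine decomposition of $r$, with full-dimensional interiors, so that the greedy-policy partition near $V^*$ behaves as a bona fide local selection; the remaining estimates are elementary norm bounds.
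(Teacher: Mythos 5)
Your proposal is correct and follows essentially the same route as the paper's proof: collapse the left-hand side to $\Vert (I-\gamma P^{\pi})V^* - g^{\pi}\Vert_{\infty}$, show it vanishes on a ball $\mathcal{B}(V^*,\delta)$ because every greedy policy there is already greedy (hence optimal) at $V^*$, and otherwise dominate the uniform bound $2R/(1-\gamma)$ by $C\delta^2$. Your construction of $\delta$ via the positive distance from $V^*$ to the finitely many closed polyhedral regions $R_{\pi}$ not containing it is a more explicit version of the paper's appeal to continuity of the active selection functions, and is a welcome clarification rather than a deviation.
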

\begin{proof}
We start the proof by upper-bounding the LHS in~\eqref{eq: strong_semismoothenss}
\begin{equation}\label{eq: strong_semismoothnessLHS}
    \begin{aligned}
    \Vert r(V) - r(V^*) - (I-\gamma P^{\pi})(V- V^*) \Vert_{\infty} &= \Vert (I - \gamma P^{\pi})V^* - g^{\pi}  \Vert_{\infty} \\ 
    &\leq (1+\gamma)\Vert V^* \Vert_{\infty} + R \\
    & \leq \frac{2}{1-\gamma}R\,,
    \end{aligned}
\end{equation}
where the last inequality follows from the definition of $V^*$ and the limit of the geometric series.
We start by considering the case where $V$ is ``close-enough'' to $V^*$. In particular, there exists $\delta>0$ such that, for all $V\in\mathcal{B}(V^*, \delta)$, then $\pi$ is an optimal policy, since by continuity the selection functions that are active at points in this neighborhood are a subset of the selection functions active at the root (a similar argument is also used in the proof of Theorem 7.2.15 in~\cite{facchinei_bookII}). Consequently, whenever $V \in \mathcal{B}(V^*, \delta)$, then the LHS in~\eqref{eq: strong_semismoothnessLHS} is equal to zero and~\eqref{eq: strong_semismoothenss} trivially holds for any $C\geq 0$. Otherwise, we can always select $C \geq  2R/((1-\gamma)\delta^2)$ and~\eqref{eq: strong_semismoothenss} is verified since $\Vert V - V^* \Vert_{\infty} \geq \delta$.  
\end{proof}

\subsection{Semismooth Newton-Type Methods for DP}
The semismooth adaptation of Newton's method and its variants constitute the main class of algorithms to solve general nonlinear non-smooth root finding problems. If the residual function enjoys certain structural properties, \textit{i.e.}, Lipschitz continuity and strong semismoothness in a neighborhood of the root, the semismooth variant of Newton's method maintains the fast local quadratic convergence typical of Newton's method~\cite[Chapter 2]{izmailov_book}. 

Instead of solving the original problem directly, semismooth Newton method starts from an initial guess $V_0\in \mathbb{R}^n$  and, for every $k\geq 0$, solves the following linear approximation of~\eqref{eq: bellman_residual_function} at the current iterate $V_{k}$
\begin{equation}\label{eq:Newtonian_LS}
    r(V_k) + J_k (V_{k+1} - V_k) = 0\,,
\end{equation}
where $J_k \in \partial r(V_k)$.
The iterate is then updated with the derived solution of~\eqref{eq:Newtonian_LS} as follows 
\begin{equation}\label{eq:Newton_update}
    V_{k+1} = V_k - J_k^{-1}r(V_k)\,.
\end{equation}
As proved in~\cite{gargiani_2022}, under Assumption~\ref{assumption: spurious_assumption} policy iteration is an instance of semismooth Newton method. By selecting $J_k  = I - \gamma P^{\pi_{k+1}}$, with $\pi_{k+1} = \text{GreedyPolicy}(V_k)$, and plugging these quantities back into~\eqref{eq:Newton_update}, we obtain the following expression for the update of semismooth Newton method
\begin{equation}
    V_{k+1} = (I-\gamma P^{\pi_{k+1}})^{-1}g^{\pi_{k+1}}\,,
\end{equation}
which is equivalent to the policy iteration cost update. In light of this connection and because of the structural properties of the Bellman residual function, it is possible to conclude local quadratic contraction for policy iteration~\cite[Proposition 3.5]{gargiani_2022}. 

The semismooth Newton-type variants are obtained by replacing $J_k \in \partial r(V_k)$ in~\eqref{eq:Newtonian_LS} with a non-singular matrix $B_k \in \mathbb{R}^{n\times n}$. In our setting, the main reason behind this approximation of the original method are the computational costs. Indeed, whenever $B_k$ is selected with a favorable sparsity structure, factorization and inversion operations are generally less expensive, leading to lower computational costs for the iteration update. In addition, whenever $B_k$ approximates ``well-enough'' an element in $\partial r(V_k)$ for all $k\geq 0$, then the method preserves local linear contraction guarantees~\cite{gargiani_2022}. As shown in~\cite{gargiani_2022}, VI can be interpreted as a semismooth Newton-type method, where $B_k = I$ for all $k\geq 0$. A simple variant of VI, called $\alpha$-VI, is obtained by approximating $J_k\in \partial r(V_k)$ with a scaled identity matrix for all $k\geq 0$, where $\alpha>0$ is the scaling parameter and plays a fundamental role in determining the convergence properties of the method~\cite{gargiani_2022}.

The following proposition summarizes the local convergence properties of semismooth Newton-type methods when used to find the root of the Bellman residual function. 

\begin{theorem}\label{th: semismooth_newton_convergence}
    Consider a general infinite-horizon discounted MDP with finite spaces and let $r:\mathbb{R}^n \rightarrow \mathbb{R}^n$ denote its associated Bellman residual function as defined in~\eqref{eq: bellman_residual_function}. In addition, let $V_0\in\mathbb{R}^n$, $L>0$ and $\kappa \in [0,1)$ be a constant. For any sequence of nonsingular matrices $\left\{ B_k \right\} \subseteq \mathbb{R}^{n\times n}$, such that, for all $k\geq 0$, $\Vert B_k^{-1} \Vert_{\infty} \leq L$ and $\exists\,\, J_k \in \partial r(V_k)$ for which the kappa-condition
    \begin{equation}\label{eq:kappa_condition}
        \Vert B_k^{-1}(B_k - J_k)  \Vert_{\infty} \leq \kappa_k \leq \kappa
    \end{equation}
     is verified, the sequence $\left\{ V_k\right\}\subseteq \mathbb{R}^n$, with
     \begin{equation}\label{eq:NewtonType_iteration}
         V_{k+1} = V_k - B_k^{-1}r(V_k)\,,
     \end{equation}
     verifies the following inequality
     \begin{equation}\label{eq: main_inequality_newtontype}
         \Vert V_{k+1} - V^* \Vert_{\infty} \leq \kappa_k \Vert V_k - V^* \Vert_{\infty} + L C_{\min}\,\Vert V_k - V^*\Vert^2_{\infty},
     \end{equation}
     where $C_{\min}$ is the global strong semismooth constant of $r$ at $V^*$.
     Therefore, if $V_0\in\mathcal{B}\left(V^*, \frac{1-\kappa}{LC_{\min}}\right)$, the sequence of iterates generated by~\eqref{eq:NewtonType_iteration} is Q-linearly converging to $V^*$ in the infinity-norm with rate at least $\kappa + LC_{\min}\, \Vert V_0 - V^*\Vert_{\infty}$. 
\end{theorem}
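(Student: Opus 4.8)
The plan is to follow the standard inexact/quasi-Newton error recursion, adapted to the semismooth setting, and then bootstrap the per-step estimate \eqref{eq: main_inequality_newtontype} into Q-linear convergence. The starting point is that $V^*$ is the root of the Bellman residual function, so $r(V^*)=0$. Subtracting $V^*$ from \eqref{eq:NewtonType_iteration} and inserting the term $\pm B_k^{-1}J_k(V_k-V^*)$, I would write the error as
\begin{equation*}
V_{k+1}-V^* = B_k^{-1}(B_k-J_k)(V_k-V^*) - B_k^{-1}\bigl(r(V_k)-r(V^*)-J_k(V_k-V^*)\bigr),
\end{equation*}
using the identity $I-B_k^{-1}J_k = B_k^{-1}(B_k-J_k)$. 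This cleanly separates a linear-model mismatch term, to be controlled by the kappa-condition, from a semismooth remainder term, to be controlled by the strong semismoothness estimate of Lemma~\ref{lemma: strong_semismoothness}.

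Next I would take infinity-norms and bound the two pieces separately. The first term is at most $\Vert B_k^{-1}(B_k-J_k)\Vert_\infty \Vert V_k-V^*\Vert_\infty \le \kappa_k \Vert V_k-V^*\Vert_\infty$ directly from \eqref{eq:kappa_condition}. The second term is at most $\Vert B_k^{-1}\Vert_\infty \, \Vert r(V_k)-r(V^*)-J_k(V_k-V^*)\Vert_\infty \le L\,C_{\min}\Vert V_k-V^*\Vert_\infty^2$, combining the uniform bound $\Vert B_k^{-1}\Vert_\infty\le L$ with Lemma~\ref{lemma: strong_semismoothness}. Adding the two bounds reproduces \eqref{eq: main_inequality_newtontype} verbatim.

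For the convergence statement, I would factor the right-hand side of \eqref{eq: main_inequality_newtontype} as $(\kappa_k + L C_{\min}\Vert V_k-V^*\Vert_\infty)\Vert V_k-V^*\Vert_\infty$, and set $\rho_0 = \kappa + L C_{\min}\Vert V_0-V^*\Vert_\infty$. The hypothesis $V_0\in\mathcal{B}\bigl(V^*,(1-\kappa)/(LC_{\min})\bigr)$ is exactly what forces $LC_{\min}\Vert V_0-V^*\Vert_\infty < 1-\kappa$, hence $\rho_0<1$. I would then proceed by induction on the claim $\Vert V_k-V^*\Vert_\infty \le \rho_0^{\,k}\Vert V_0-V^*\Vert_\infty$: under this hypothesis $\Vert V_k-V^*\Vert_\infty \le \Vert V_0-V^*\Vert_\infty$, so the contraction factor $\kappa_k + LC_{\min}\Vert V_k-V^*\Vert_\infty$ is itself bounded by $\rho_0$, the iterate remains in the ball, and $\Vert V_{k+1}-V^*\Vert_\infty \le \rho_0^{\,k+1}\Vert V_0-V^*\Vert_\infty$. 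This yields the claimed Q-linear rate $\rho_0 = \kappa + LC_{\min}\Vert V_0-V^*\Vert_\infty$.

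The one point I expect to require care is that Lemma~\ref{lemma: strong_semismoothness} is stated for the specific matrix $I-\gamma P^\pi$ with $\pi\in\text{GreedyPolicy}(V_k)$, whereas the theorem permits an arbitrary $J_k\in\partial r(V_k)$. Since $\partial r(V_k)$ is the convex hull of $\{I-\gamma P^\pi:\pi\in\text{GreedyPolicy}(V_k)\}$, I would close this gap by writing $J_k=\sum_i \lambda_i\,(I-\gamma P^{\pi_i})$ with $\lambda_i\ge 0$ and $\sum_i\lambda_i=1$; because $r(V_k)-r(V^*)-J_k(V_k-V^*)=\sum_i\lambda_i\bigl(r(V_k)-r(V^*)-(I-\gamma P^{\pi_i})(V_k-V^*)\bigr)$, applying the Lemma termwise and using the triangle inequality preserves the quadratic bound $C_{\min}\Vert V_k-V^*\Vert_\infty^2$ for the convex combination. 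This convexity step is the only genuine departure from the classical smooth inexact-Newton proof; once it is in place, the semismooth remainder plays the role of the usual second-order Taylor term and the remainder of the argument is routine.
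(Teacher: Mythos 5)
Your proposal is correct and follows essentially the same route as the paper: the identical error decomposition $V_{k+1}-V^* = B_k^{-1}(B_k-J_k)(V_k-V^*) - B_k^{-1}\bigl(r(V_k)-r(V^*)-J_k(V_k-V^*)\bigr)$, the same two bounds via the kappa-condition and Lemma~\ref{lemma: strong_semismoothness}, and the same bootstrapping of \eqref{eq: main_inequality_newtontype} into the Q-linear rate $\kappa + LC_{\min}\Vert V_0-V^*\Vert_\infty$. Your closing remark on extending Lemma~\ref{lemma: strong_semismoothness} from the generators $I-\gamma P^{\pi}$ to arbitrary convex combinations in $\partial r(V_k)$ is a point the paper's proof passes over silently, and it is a worthwhile clarification.
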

\begin{proof}
A proof of~\eqref{eq: main_inequality_newtontype} can be derived by following similar steps as in~\cite[Theorem 2.6]{gargiani_extended_2022}.

We derive the first result by upper-bounding the infinity-norm of the difference between the $(k+1)$-iterate and the root $V^*$ as follow
\begin{equation*}
    \begin{aligned}
        &\Vert V_{k+1} - V^* \Vert_{\infty} = \Vert V_k - B_{k}^{-1} r(V_k) - V^* \Vert_{\infty}\\
        &= \Vert B_k^{-1}\left( B_k - J_k \right) \left( V_k - V^* \right) - B_k^{-1}\left( r(V_k) - r(V^*) - J_k\left( V_k - V^* \right) \right) \Vert_{\infty}\\
        &\overset{(a)}{\leq} \Vert B_k^{-1}\left( B_k - J_k \right)\Vert_{\infty} \Vert V_k - V^* \Vert_{\infty} + \Vert B_k^{-1}\Vert_{\infty} \Vert  r(V_k) - r(V^*) - J_k\left( V_k - V^* \right)  \Vert_{\infty}\\
        &\overset{(b)}{\leq} \kappa_k \Vert V_k - V^* \Vert_{\infty} + LC_{\min}\, \Vert   V_k - V^* \Vert^2_{\infty} \,,
    \end{aligned}
\end{equation*}
where inequality $(a)$ follows from the fundamental properties of the infinity-norm and inequality $(b)$ from the assumptions on the sequence $\left\{ B_k\right\}$ and Lemma~\ref{lemma: strong_semismoothness}.
From the previous derivations we can conclude that
\begin{equation}\label{eq:inequality_contraction}
    \Vert V_{k+1} - V^* \Vert_{\infty} \leq \left( \kappa_k + LC_{\min} \Vert V_{k} - V^*\Vert_{\infty}\right)\Vert  V_{k} - V^*\Vert_{\infty}\,.
\end{equation}
Finally, we can use~\eqref{eq:inequality_contraction} to obtain a sufficient condition on the initial iterate $V_0$ such that the sequence $\left\{ V_k\right\}$ enjoys local Q-linear convergence to $V^*$. In particular, starting from~\eqref{eq:inequality_contraction}, we obtain that if
\begin{equation}
    \Vert V_0 - V^* \Vert_{\infty}   < \frac{1-\kappa}{LC_{\min}}\,,
\end{equation}
then the iterates are Q-linearly contracting to $V^*$ in the infinity-norm with rate at least $\kappa + LC_{\min}\, \Vert V_0 - V^* \Vert_{\infty}$.
\end{proof}

Notice that for exact semismooth Newton method for all $k\geq 0$ $\kappa_k = 0$ since $B_k \in \partial r(V_k)$. Consequently, the generated sequence of iterates enjoys local Q-quadratic convergence to $V^*$ in the infinity-norm as~\eqref{eq: main_inequality_newtontype} reduces to 
\begin{equation}
    \Vert V_{k+1} - V^* \Vert_{\infty} \leq LC_{\min}\, \Vert V_k - V^* \Vert^2_{\infty}\,.
\end{equation}
This consideration, together with the fact that, under Assumption~\ref{assumption: spurious_assumption}, PI is an instance of semismooth Newton method, leads to the results discussed in the following corollary.
\begin{corollary}
Consider a general infinite-horizon discounted MDP with finite spaces. Under Assumption~\ref{assumption: spurious_assumption}, the sequence $\left\{ V_k\right\}\subseteq \mathbb{R}^n$ generated by PI enjoys local Q-quadratic convergence to $V^*$.
\end{corollary}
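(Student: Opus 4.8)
The plan is to obtain this as an immediate specialization of Theorem~\ref{th: semismooth_newton_convergence} to the case of an exact semismooth Newton step, exploiting the identification of PI with semismooth Newton's method that was recalled just above the corollary. First I would fix the correspondence explicitly: under Assumption~\ref{assumption: spurious_assumption}, the PI cost update coincides with the iteration~\eqref{eq:NewtonType_iteration} when we choose $B_k = I - \gamma P^{\pi_{k+1}}$ with $\pi_{k+1} = \text{GreedyPolicy}(V_k)$, and by definition of the greedy policy this same matrix is an element $J_k \in \partial r(V_k)$. Thus for PI we have $B_k = J_k$ for every $k \geq 0$.

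The next step is to check that the two standing hypotheses of Theorem~\ref{th: semismooth_newton_convergence} hold and, crucially, with $\kappa = 0$. Because $B_k = J_k$, the difference $B_k - J_k$ is the zero matrix, so the kappa-condition~\eqref{eq:kappa_condition} is satisfied with $\kappa_k = 0$, and hence we may take $\kappa = 0$ in the theorem. For the uniform bound on the inverses, I would invoke the estimate recalled earlier in Section~\ref{sec3}, namely $\Vert (I - \gamma P^{\pi})^{-1}\Vert_{\infty} \leq 1/(1-\gamma)$ for every $\pi \in \Pi$, which shows that the choice $L = 1/(1-\gamma)$ works uniformly over all iterations.

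With these two facts in hand, the conclusion follows by specializing inequality~\eqref{eq: main_inequality_newtontype}: setting $\kappa_k = 0$ collapses the linear term and leaves
\begin{equation*}
    \Vert V_{k+1} - V^* \Vert_{\infty} \leq L C_{\min}\, \Vert V_k - V^* \Vert^2_{\infty}\,,
\end{equation*}
which is precisely a Q-quadratic contraction. The region-of-attraction statement of the theorem, instantiated at $\kappa = 0$, then guarantees that whenever $V_0 \in \mathcal{B}\!\left(V^*, \tfrac{1}{LC_{\min}}\right)$ the PI iterates remain in this ball and converge Q-quadratically to $V^*$, establishing the local claim.

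I do not expect any genuine obstacle here, since the corollary is a direct corollary: all the analytical work has already been done in Lemma~\ref{lemma: strong_semismoothness} and Theorem~\ref{th: semismooth_newton_convergence}. The only point requiring a little care is the verification that the greedy-policy choice really yields $B_k \in \partial r(V_k)$ (so that $\kappa_k = 0$ is admissible rather than merely approximately zero); this is exactly where Assumption~\ref{assumption: spurious_assumption} is needed, as it guarantees the generalized Jacobian has the clean form $\partial r(V) = \text{Conv}(\{ I - \gamma P^{\pi} : \pi \in \text{GreedyPolicy}(V)\})$ and hence contains $I - \gamma P^{\pi_{k+1}}$.
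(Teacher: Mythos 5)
Your proposal is correct and follows exactly the route the paper intends: it identifies the PI update with the semismooth Newton iteration $B_k = I - \gamma P^{\pi_{k+1}} = J_k \in \partial r(V_k)$ (using Assumption~\ref{assumption: spurious_assumption}), takes $\kappa_k = 0$ and $L = 1/(1-\gamma)$, and specializes~\eqref{eq: main_inequality_newtontype} to the purely quadratic bound with the corresponding region of attraction. This matches the paper's own (implicit) argument given in the discussion immediately preceding the corollary.
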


\section{Inexact Policy Iteration Methods}\label{sec4}
Even if, as proved in Theorem~\ref{th: semismooth_newton_convergence}, the semismooth Newton method enjoys a fast
rate of convergence, computing the exact solution of~\eqref{eq:Newtonian_LS}
using a direct method can be expensive when the number
of unknowns is large. A more computationally efficient
solution in the large-scale case consists in solving~\eqref{eq:Newtonian_LS}
only approximately with some iterative solver for linear systems and
using a certain stopping rule. These are the principles
behind the inexact variants of semismooth Newton method~\cite{MARTINEZ1995127}. In particular, $V_{k+1}$
is no longer required to exactly solve~\eqref{eq:Newtonian_LS}, but only to satisfy
\begin{equation}\label{eq:stopping_cond}
    \Vert r(V_k) + J_{k}(V_{k+1} - V_k) \Vert \leq \alpha_k\Vert r(V_k) \Vert\,,
\end{equation}
for some $\alpha_k \in [0,1)$. The sequence $\left\{\alpha_k\right\}$ is called \textit{forcing
sequence} and it greatly affects both local convergence
properties and robustness of the method~\cite{MARTINEZ1995127}. Different iterative solvers for linear systems can be
used to approximately solve~\eqref{eq:Newtonian_LS}. Often
Krylov subspace methods, such as the generalized minimal
residual method (GMRES)~\cite{gmres}, are
deployed in large-scale scenarios.
Based on these observations, we define a novel variant of PI for large-scale scenarios,
which we call inexact policy iteration (iPI) methods. This class of methods is based on approximately solving the policy evaluation step with an iterative solver. The methods in this class start with an initial guess of the optimal cost $V_0\in\mathbb{R}^n$ and then at every iteration extract a greedy policy associated with the current iterate $V_k \in \mathbb{R}^n$, which is used
to compute an element in Clarke’s generalized Jacobian.
The next iterate $V_{k+1}\in\mathbb{R}^n$ is selected as an approximate solution of the Newtonian linear system
\begin{equation}\label{eq:PE_LS}
    (I - \gamma P^{\pi_{k+1}}) V = g^{\pi_{k+1}}
\end{equation}
which verifies the stopping condition in~\eqref{eq:stopping_cond} with the
infinity-norm. Because of the specific structure of the
Bellman residual function,~\eqref{eq:stopping_cond} simplifies to
\begin{equation}\label{eq:simplified_stopping_condition}
    \Vert g^{\pi_{k+1}} - (I - \gamma P^{\pi_{k+1}})V_{k+1} \Vert \leq \alpha_k \Vert g^{\pi_{k+1}} - (I - \gamma P^{\pi_{k+1}})V_{k} \Vert\,.
\end{equation}
In principle, any iterative solver for linear systems with
non-singular coefficient matrices can be used to generate
an approximate solution of~\eqref{eq:PE_LS}, such as VI, its mini-batch version~\cite{gargiani_minibatch} and GMRES. Notice
that, when VI is deployed as inner solver, we obtain a
variant of OPI where the number of inner iterations is not
selected a priori, but dictated by the stopping condition.
The parametric stopping condition allows for a dynamic selection of the number of inner iterations based on locally available information. Finally, as we discuss in Section~\ref{sec: iterative_LS} and~\ref{subsec: benchmarks}, deploying a different inner solver than VI for policy evaluation may lead to a superior convergence performance.

See Algorithm~\ref{alg:iPI} for a pseudocode description of a general iPI method with a constant $\alpha$-parameter and where $\text{IterativeSolver}(A, b, \tilde{x})$ is used to denote the operator that applies one iteration with a well-defined iterative solver for linear systems with non-singular coefficient matrices to the linear system $Ax = b$ with $A$ being non-singular and starting from $\tilde{x}$. 



\begin{algorithm}
\caption{Inexact Policy Iteration}\label{alg:iPI}
\begin{algorithmic}[1]
\Require $V_{0} \in \mathbb{R}^n,\, \textit{tol}>0, \alpha \in (0,1)$
\State $k \leftarrow 0$
\While {$\Vert r(V_{k})\Vert_{\infty} > tol$}
        \State $\pi_{k+1} \leftarrow \text{GreedyPolicy}(V_{k})$
        \State $J_k \leftarrow I - \gamma P^{\pi_{k+1}}$
        \State $\theta_{0}\leftarrow V_k$
        \State $i\leftarrow 0$
        \While{$\Vert g^{\pi_{k+1}} - J_k {\theta}_{i} \Vert_{\infty} \leq \alpha\Vert g^{\pi_{k+1}} - J_k V_{k}\Vert_{\infty}$
        }

        \State ${\theta}_{i+1}\leftarrow \text{IterativeSolver}(J_k, g^{\pi_{k+1}}, {\theta}_{i})$
        \State $i \leftarrow i+1$
        \EndWhile
        \State $V_{k+1}\leftarrow \theta_i$
        \State $k \leftarrow k+1$
\EndWhile
\end{algorithmic}
\end{algorithm}

\subsection{Local and Global Convergence}

This section is dedicated to the analysis of the local and global convergence properties of inexact policy iteration methods. Local results were originally studied in~\cite{gargiani_2023}, but we will report them for completeness. 
On that side, we complement the analysis by adding an estimate for the region of attraction. The latter is later exploited in conjunction with additional results to show global convergence of a general iPI method.

\begin{theorem}[local convergence]\label{th: iPI_localconvergence}
Consider a general infinite-horizon discounted MDP with finite spaces and let $\left\{ V_k \right\}$ be the sequence of iterates generated by a general inexact policy iteration method as described in Algorithm~\ref{alg:iPI} with $V_0 \in \mathbb{R}^n$. In addition, let $\omega = \frac{(1-\gamma) - (1+\gamma)\alpha}{C_{\min}}$. Under Assumption~\ref{assumption: spurious_assumption}, if $\left\{ \alpha_k\right\} \subseteq [0,\,\alpha]$ with $\alpha \in (0, \frac{1-\gamma}{1+\gamma})$ and $V_0 \in \mathcal{B}(V^*, \omega)$, then  $\left\{ \Vert V_k - V^* \Vert_{\infty}\right\}$ is Q-linearly converging to zero with rate $\frac{1+\gamma}{1-\gamma}\alpha$. If, in addition, $\lim_{k\rightarrow \infty} \alpha_k = 0$, then $\left\{ \Vert V_k - V^* \Vert_{\infty}\right\}$ is converging to zero Q-superlinearly.
\end{theorem}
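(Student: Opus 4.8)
The plan is to work directly from the inexact stopping condition~\eqref{eq:stopping_cond}, rather than recasting the method as a semismooth Newton-type iteration, since here the effective matrix playing the role of $B_k$ is only implicitly defined. First I would introduce the inexactness residual $s_k := r(V_k) + J_k(V_{k+1}-V_k)$, which by~\eqref{eq:stopping_cond}, the $(1+\gamma)$-Lipschitz continuity of $r$, and $r(V^*)=0$ satisfies $\Vert s_k\Vert_\infty \le \alpha_k\Vert r(V_k)\Vert_\infty \le \alpha_k(1+\gamma)\Vert V_k - V^*\Vert_\infty$. Solving for the iterate gives $V_{k+1} = V_k - J_k^{-1}r(V_k) + J_k^{-1}s_k$, and since $\pi_{k+1}\in\text{GreedyPolicy}(V_k)$ the matrix $J_k = I-\gamma P^{\pi_{k+1}}$ is precisely the element of $\partial r(V_k)$ to which Lemma~\ref{lemma: strong_semismoothness} applies. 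Writing the semismoothness error $e_k := r(V_k)-r(V^*) - J_k(V_k-V^*)$ and using $r(V^*)=0$, the $(V_k-V^*)$ terms cancel after substituting $J_k^{-1}r(V_k) = (V_k-V^*) + J_k^{-1}e_k$, leaving the clean identity $V_{k+1}-V^* = J_k^{-1}(s_k - e_k)$.

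Bounding this with $\Vert J_k^{-1}\Vert_\infty \le 1/(1-\gamma)$, with $\Vert e_k\Vert_\infty \le C_{\min}\Vert V_k-V^*\Vert_\infty^2$ from Lemma~\ref{lemma: strong_semismoothness}, and with the residual estimate on $s_k$ yields the one-step contraction inequality
\begin{equation*}
    \Vert V_{k+1}-V^*\Vert_\infty \le \left(\frac{(1+\gamma)\alpha_k}{1-\gamma} + \frac{C_{\min}}{1-\gamma}\Vert V_k-V^*\Vert_\infty\right)\Vert V_k-V^*\Vert_\infty.
\end{equation*}
The crux of the argument is the algebraic identity $\frac{C_{\min}}{1-\gamma}\,\omega = 1 - \frac{(1+\gamma)\alpha}{1-\gamma}$, which holds by the definition of $\omega$ and which shows that the bracketed factor is strictly below $1$ exactly when $\Vert V_k - V^*\Vert_\infty < \omega$ and $\alpha_k \le \alpha$; note that $\omega>0$ is equivalent to the hypothesis $\alpha < \frac{1-\gamma}{1+\gamma}$.

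An induction then shows that $\mathcal{B}(V^*,\omega)$ is forward invariant and that $\Vert V_k - V^*\Vert_\infty$ is strictly decreasing: since $\Vert V_0-V^*\Vert_\infty < \omega$, the factor at every step is dominated by the uniform constant $\bar\beta := \frac{(1+\gamma)\alpha}{1-\gamma} + \frac{C_{\min}}{1-\gamma}\Vert V_0 - V^*\Vert_\infty < 1$, so $\Vert V_k-V^*\Vert_\infty \le \bar\beta^{\,k}\Vert V_0-V^*\Vert_\infty \to 0$. Because the distance to $V^*$ vanishes, the quadratic term in the contraction factor becomes negligible asymptotically, giving $\limsup_k \frac{\Vert V_{k+1}-V^*\Vert_\infty}{\Vert V_k-V^*\Vert_\infty} \le \frac{(1+\gamma)\alpha}{1-\gamma}$, the claimed Q-linear rate. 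For the final statement, if $\alpha_k\to 0$ then both the $\alpha_k$ term and the quadratic term tend to zero, so the per-step ratio tends to zero and the convergence is Q-superlinear.

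The main obstacle is twofold. First, I must verify that the $J_k$ built from the greedy policy at $V_k$ is an \emph{admissible} element for the semismoothness bound of Lemma~\ref{lemma: strong_semismoothness}, so that $e_k$ is genuinely $O(\Vert V_k - V^*\Vert_\infty^2)$ rather than merely $O(\Vert V_k - V^*\Vert_\infty)$; this is what elevates the estimate from a linear to a true local contraction. Second, I must tie the definition of $\omega$ to the contraction threshold through the exact identity above so that the forward-invariance induction closes and the ball is never left. Everything else reduces to routine infinity-norm manipulations and the previously established bound $\Vert(I-\gamma P^{\pi})^{-1}\Vert_\infty \le 1/(1-\gamma)$.
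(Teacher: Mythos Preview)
Your proof is correct and follows essentially the same route as the paper: the identity $V_{k+1}-V^* = J_k^{-1}(s_k - e_k)$ you derive is precisely how one obtains the paper's starting inequality (which it imports from~\cite{gargiani_2023}), and from there your use of Lemma~\ref{lemma: strong_semismoothness}, the bound $\Vert J_k^{-1}\Vert_\infty\le 1/(1-\gamma)$, the forward-invariance induction via the identity $\frac{C_{\min}}{1-\gamma}\omega = 1-\frac{(1+\gamma)\alpha}{1-\gamma}$, and the passage to the Q-linear/superlinear limit all mirror the paper's argument. The only cosmetic difference is that you spell out the derivation of the one-step error identity explicitly, whereas the paper cites it.
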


\begin{proof}
We refer to the proof of Theorem 4 in~\cite{gargiani_2023} for the derivation of the following inequality
\begin{equation}\label{eq: ineq_TH4}
\Vert V_{k+1} - V^* \Vert_{\infty} \leq \frac{1}{1-\gamma}\left[\Vert r(V_k) - r(V^*) - J_k (V_k - V^*) \Vert_{\infty} \!+ (1+\gamma)\alpha_k \Vert V_k - V^*\Vert_{\infty} \!\right]\!.
\end{equation}
Making use of Lemma~\ref{lemma: strong_semismoothness}, we can upper-bound the first term in the RHS of~\eqref{eq: ineq_TH4} and obtain the following inequality
\begin{equation}\label{eq: ineq_0delta}
\begin{aligned}
\Vert V_{k+1} - V^* \Vert_{\infty} &\leq \frac{1}{1-\gamma}\left[C_{\min} \Vert V_k - V^* \Vert^2_{\infty} + (1+\gamma)\alpha_k \Vert V_k - V^*\Vert_{\infty} \right]\\
&=  \left[\frac{C_{\min}}{1-\gamma} \Vert V_k - V^* \Vert_{\infty} + \frac{(1+\gamma)}{(1-\gamma)}\alpha_k \right] \Vert V_k - V^* \Vert_{\infty}\,. \\
\end{aligned}
\end{equation}
Finally, since $\alpha_k \leq \alpha$, we obtain the following upper-bound
\begin{equation}\label{eq: ineq_delta}
\begin{aligned}
\Vert V_{k+1} - V^* \Vert_{\infty}&\leq \underbrace{\left[\frac{C_{\min}}{1-\gamma} \Vert V_k - V^* \Vert_{\infty} + \frac{(1+\gamma)}{(1-\gamma)}\alpha \right]}_{:=\delta_k} \Vert V_k - V^* \Vert_{\infty}\,. 
\end{aligned}
\end{equation}
Convergence of the sequence $\left\{ \Vert V_k - V^*\Vert_{\infty} \right\}$ to zero follows from the fact that $\Vert V_{1} - V^* \Vert_{\infty} \,\leq \,\delta_0\,\Vert V_{0} - V^* \Vert_{\infty}$ and $\delta_0 < 1$, which is an implication of the assumption that $V_0 \in \mathcal{B}(V^*, \omega)$. Starting from~\eqref{eq: ineq_delta} and using these inequalities recursively lead to conclude that
\begin{equation}\label{eq: linear_contraction}
\Vert V_{k} - V^*\Vert_{\infty} \leq \,\delta_0^k\,\Vert V_0 - V^*\Vert_{\infty}\,.
\end{equation}
This means that, under the considered assumptions, the sequence $\left\{ \Vert V_k - V^*\Vert_{\infty} \right\}$ converges to zero at least linearly with rate $\delta_0$. To obtain the local contraction rate we take the limit for $k\rightarrow \infty$ of~\eqref{eq: ineq_delta} as follow
\begin{equation}
\begin{aligned}
\lim_{k \rightarrow \infty} \frac{\Vert V_{k+1} - V^* \Vert_{\infty}}{\Vert V_{k} - V^* \Vert_{\infty}} = \lim_{k \rightarrow \infty} \frac{C_{\min}}{1 - \gamma} \Vert V_k - V^* \Vert_{\infty} + \frac{(1+\gamma)}{(1-\gamma)}\alpha 
= \frac{(1+\gamma)}{(1-\gamma)}\alpha < 1\,,
\end{aligned}
\end{equation}
where the second equality follows from~\eqref{eq: linear_contraction} and the strict inequality follows from the assumption that $\alpha \in (0, \frac{1-\gamma}{1+\gamma})$. If, in addition, $\lim_{k\rightarrow \infty} \alpha_k = 0$, then we can repeat similar steps starting from~\eqref{eq: ineq_0delta} and conclude local Q-superlinear convergence.
\end{proof}
While the convergence results of OPI (see for instance Proposition~\ref{prop: convergence_OPI}) fails in capturing the improvement in convergence rate that results from running more inner iterations, and, consequently, from solving with higher accuracy the policy evaluation step, the results of Theorem~\ref{th: iPI_localconvergence} show that smaller values of the $\alpha$-parameter, which translates into solving with a higher accuracy the policy evaluation step, lead to a better local convergence rate of the method.

While it is possible to show local contraction with relatively simple steps, proving global convergence of a general iPI method is not equally simple. Intuitively, from the convergence results of OPI in Proposition~\ref{prop: convergence_OPI} (or also Proposition  in~\cite{DB_book}), one may expect that, with no additional assumptions on the structural properties of the underlying MDP, for any $\gamma\in (0,1)$  there exists a non-empty set of positive $\alpha$ for which we can get global convergence guarantees for a general iPI method. As we discuss in the following, despite its intuitiveness, proving that is quite challenging.
Differently from the global convergence proof of OPI (see Proposition~\ref{prop: convergence_OPI}), we can not exploit the fundamental properties of the Bellman operators tout court as, depending on the specific selection, the inner solver may not be a contraction in the infinity-norm. 

We start the discussion on global convergence of iPI methods by illustrating possible technical complications. Then, by introducing additional assumptions on the structure of the underlying problem, we prove global convergence of a general iPI method. 

\begin{proposition}[]\label{prop: upper_bound}
Consider a general infinite-horizon discounted MDP with finite spaces and let $\left\{ V_k \right\}$ and $\left\{ \pi_{k+1} \right\}$ be the sequences generated by a general iPI method as described in Algorithm~\ref{alg:iPI}, where $V_0 \in \mathbb{R}^n$. Then for any $k\geq 0$
\begin{equation}\label{eq: ineq_iPI_wrong}
\Vert V_{k+1} - V^* \Vert_{\infty} \leq \frac{(1+\gamma)}{(1-\gamma)}\alpha \Vert V_k - V^* \Vert_{\infty} + \Vert V^{\pi_{k+1}} - V^* \Vert_{\infty}\,.
\end{equation}
\end{proposition}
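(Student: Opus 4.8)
The plan is to insert the exact policy-evaluation value $V^{\pi_{k+1}}$ as an intermediate point and split the error by the triangle inequality,
\begin{equation*}
\Vert V_{k+1} - V^* \Vert_{\infty} \leq \Vert V_{k+1} - V^{\pi_{k+1}} \Vert_{\infty} + \Vert V^{\pi_{k+1}} - V^* \Vert_{\infty}\,.
\end{equation*}
Since the second term already appears verbatim in the claimed bound, everything reduces to showing that the first term is bounded by $\tfrac{1+\gamma}{1-\gamma}\alpha\,\Vert V_k - V^*\Vert_{\infty}$.

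First I would exploit that $V^{\pi_{k+1}}$ solves \eqref{eq:PE_LS} exactly, i.e. $(I-\gamma P^{\pi_{k+1}})V^{\pi_{k+1}} = g^{\pi_{k+1}}$. Left-multiplying $V_{k+1} - V^{\pi_{k+1}}$ by the coefficient matrix produces precisely the inner residual, so that
\begin{equation*}
V_{k+1} - V^{\pi_{k+1}} = (I-\gamma P^{\pi_{k+1}})^{-1}\bigl[(I-\gamma P^{\pi_{k+1}})V_{k+1} - g^{\pi_{k+1}}\bigr]\,.
\end{equation*}
Taking infinity norms and invoking the bound $\Vert (I-\gamma P^{\pi})^{-1}\Vert_{\infty} \leq 1/(1-\gamma)$ recalled earlier yields $\Vert V_{k+1}-V^{\pi_{k+1}}\Vert_{\infty} \leq \tfrac{1}{1-\gamma}\Vert g^{\pi_{k+1}} - (I-\gamma P^{\pi_{k+1}})V_{k+1}\Vert_{\infty}$.

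Next I would apply the stopping condition \eqref{eq:simplified_stopping_condition}, which $V_{k+1}$ satisfies by construction, to replace the residual at $V_{k+1}$ by $\alpha_k$ times the residual at $V_k$. The key observation is that, because $\pi_{k+1}$ is greedy with respect to $V_k$, one has $T V_k = T_{\pi_{k+1}} V_k$, so the inner residual $g^{\pi_{k+1}} - (I-\gamma P^{\pi_{k+1}})V_k$ coincides up to sign with the Bellman residual $r(V_k)$. Using $r(V^*)=0$ together with the global $(1+\gamma)$-Lipschitz continuity of $r$ in the infinity-norm, I would then bound $\Vert r(V_k)\Vert_{\infty} = \Vert r(V_k)-r(V^*)\Vert_{\infty} \leq (1+\gamma)\Vert V_k - V^*\Vert_{\infty}$. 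Chaining these estimates and using $\alpha_k \leq \alpha$ gives the required bound on $\Vert V_{k+1}-V^{\pi_{k+1}}\Vert_{\infty}$, and combining with the triangle inequality delivers \eqref{eq: ineq_iPI_wrong}.

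The derivation is essentially a clean concatenation of three previously established facts, so I do not expect a serious analytical obstacle; the only point requiring care is the identification of the inner-solver residual at $V_k$ with the Bellman residual $r(V_k)$, which hinges on $\pi_{k+1}$ being greedy. It is worth emphasising, however, that while this inequality is correct, it is not directly usable for establishing global convergence: the term $\Vert V^{\pi_{k+1}} - V^* \Vert_{\infty}$ measures the gap between the exact value of the greedy policy and the optimum, and this quantity is not controlled by $\Vert V_k - V^*\Vert_{\infty}$ without further structural assumptions on the MDP. This limitation is precisely the difficulty that the subsequent analysis must overcome.
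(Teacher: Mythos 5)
Your proposal is correct and follows essentially the same route as the paper's proof: the same triangle-inequality split through $V^{\pi_{k+1}}$, the same bound $\Vert (I-\gamma P^{\pi_{k+1}})^{-1}\Vert_{\infty}\leq 1/(1-\gamma)$ applied to the inner residual at $V_{k+1}$, the stopping condition, and the $(1+\gamma)$-Lipschitz continuity of $r$ (the paper writes the paper's first term as $\Vert V_k + \Delta V_k - V^{\pi_{k+1}}\Vert_{\infty}$, which is exactly your $\Vert V_{k+1}-V^{\pi_{k+1}}\Vert_{\infty}$). Your closing remark about why the bound cannot by itself yield global convergence also matches the discussion the paper gives immediately after the proposition.
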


\begin{proof}
In the following, we use $J_k = I - \gamma P^{\pi_{k+1}}$ and $\Delta V_k = V_{k+1} - V_k$. We start by algebraically manipulating $V_{k+1} - V^*$ and then we use the fundamental properties of the infinity-norm, the structural properties of the Bellman residual function and the stopping condition to obtain the final result as follow
\begin{equation*}\label{eq:inequality_globalconvergence}
\begin{aligned}
\Vert V_{k+1} - V^* \Vert_{\infty} &= \Vert V_{k} + \Delta V_k - V^* \Vert_{\infty}\\
&= \Vert V_{k} + \Delta V_k - V^{\pi_{k+1}} + V^{\pi_{k+1}}  - V^* \Vert_{\infty}\\
&\leq \Vert V_{k} + \Delta V_k  - V^{\pi_{k+1}} \Vert_{\infty} + \Vert V^{\pi_{k+1}}  - V^* \Vert_{\infty}\\
&= \Vert J_k^{-1}g^{\pi_{k+1}}\! - J_k^{-1}g^{\pi_{k+1}} + J_k^{-1}J_kV_{k} + J_k^{-1}J_k\Delta V_k  - J_k^{-1}J_kV^{\pi_{k+1}} \Vert_{\infty} \\
&\phantom{=\,} + \Vert V^{\pi_{k+1}}  - V^* \Vert_{\infty}\\
&= \Vert J_k^{-1}\left( g^{\pi_{k+1}}\! - J_k V^{\pi_{k+1}} \right) + J_k^{-1}\left( J_k V_k - g^{\pi_{k+1}} + J_k \Delta V_k \right) \Vert_{\infty}\\
&\phantom{=\,} + \Vert V^{\pi_{k+1}}  - V^* \Vert_{\infty}\\
&= \Vert J_k^{-1}\left( J_k V_k - g^{\pi_{k+1}} + J_k \Delta V_k \right) \Vert_{\infty} + \Vert V^{\pi_{k+1}}  - V^* \Vert_{\infty}\\
&\leq \Vert J_k^{-1} \Vert_{\infty} \Vert J_k V_k - g^{\pi_{k+1}} + J_k \Delta V_k \Vert_{\infty} + \Vert V^{\pi_{k+1}}  - V^* \Vert_{\infty}\\
&\overset{(a)}{\leq} \frac{1}{1-\gamma} \Vert J_k V_k - g^{\pi_{k+1}} + J_k \Delta V_k \Vert_{\infty} + \Vert V^{\pi_{k+1}}  - V^* \Vert_{\infty} \\
&\leq \frac{1}{1-\gamma}\alpha \Vert r(V_k) - r(V^*) \Vert_{\infty} + \Vert V^{\pi_{k+1}}  - V^* \Vert_{\infty} \\
&\overset{(b)}{\leq} \frac{1+\gamma}{1-\gamma}\alpha \Vert V_k - V^* \Vert_{\infty} + \Vert V^{\pi_{k+1}}  - V^* \Vert_{\infty}\,,  
\end{aligned}
\end{equation*}
where $(a)$ and $(b)$ are obtained by deploying Lemma 3 and Lemma 2 in~\cite{gargiani_2023}, respectively.
\end{proof}

Starting from~\eqref{eq: ineq_iPI_wrong} it is very tempting to proceed by deploying the convergence results from policy iteration and, in particular, Inequality~\eqref{eq: PI_improvement_per_iteration} in Proposition~\ref{prop: PI_convergence}, as $V^{\pi_{k+1}}$ is obtained by applying a full policy iteration step (policy improvement and policy evaluation) starting from $V_k$. That would allow one to conclude a sufficient condition on $\alpha$, similar to the one in Proposition~\ref{th: iPI_localconvergence}, and that guarantees global convergence of iPI methods.  
Unfortunately, without additional assumptions, for the following upper-bound to hold
\begin{equation}\label{eq:wrong_ineqPI}
\Vert V^{\pi_{k+1}} - V^* \Vert_{\infty} \leq \gamma\,\Vert V_{k} - V^* \Vert_{\infty}\,,
\end{equation}
$V_k$ would need to be the cost associated to a policy $\pi \in \Pi$. Because of the inexactness in the policy evaluation step, this condition is not necessarily verified, therefore we can not deploy~\eqref{eq:wrong_ineqPI}.

Without introducing additional assumptions, a tight upper-bound on $\Vert V^{\pi_{k+1}} - V^* \Vert_{\infty}$ is given by Proposition 2.3.3 in~\cite{DB_book}. In particular, by deploying
\begin{equation}\label{eq:2.3.3Bertsekas}
\Vert V^{\pi_{k+1}} - V^* \Vert_{\infty} \leq \frac{2\gamma}{1-\gamma}\Vert V_k - V^* \Vert_{\infty}\,
\end{equation} 
in~\eqref{eq: ineq_iPI_wrong}, we obtain the following upper-bound
\begin{equation}
\Vert V_{k+1} - V^* \Vert_{\infty} \leq \frac{(1+\gamma)\alpha + 2\gamma}{1 - \gamma} \Vert V_{k} - V^* \Vert_{\infty}
\end{equation}
which allows one to conclude global convergence if $\gamma < 1/3$ and $\alpha \in (0, (1 - 3\gamma)/(1 + \gamma))$. These results are unsatisfactory as they are only valid for a limited range of discount factor values. By exploiting also the contractivity of the Bellman operator, in the next proposition we provide a similar sufficient condition to guarantee global convergence of a general iPI method, but for a slightly wider range of discount factor values.
\begin{proposition}\label{prop: global_gamma}
Consider a general infinite-horizon discounted MDP with finite spaces and let $\left\{ V_k \right\}$ be the sequence of iterates generated by a general iPI method as described in Algorithm~\ref{alg:iPI}. If $\gamma \in (0, \, -1+\sqrt{2})$ and $\alpha \in (0, \, (1 - 2\gamma - \gamma^2)/(1 + \gamma))$, then $(\alpha + \gamma)(1 + \gamma) + \gamma<1$ and for any $k\geq 0$
\begin{equation}
    \Vert V_{k+1} - V^* \Vert_{\infty} \leq ((\alpha + \gamma)(1+\gamma) + \gamma)\Vert  V_k - V^*\Vert_{\infty}\,.
\end{equation}
\end{proposition}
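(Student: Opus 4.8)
The statement bundles a purely algebraic claim with a contraction estimate, so I would dispatch the algebra first. Rearranging $(\alpha+\gamma)(1+\gamma)+\gamma<1$ gives $(\alpha+\gamma)(1+\gamma)<1-\gamma$, i.e. $\alpha<\frac{1-\gamma}{1+\gamma}-\gamma=\frac{1-2\gamma-\gamma^2}{1+\gamma}$, which is exactly the stated admissible range for $\alpha$; this interval is non-empty precisely when $1-2\gamma-\gamma^2>0$, i.e. when $\gamma<-1+\sqrt{2}$, the positive root of $\gamma^2+2\gamma-1$. Hence the hypotheses are equivalent to $(\alpha+\gamma)(1+\gamma)+\gamma<1$, and only the contraction inequality requires real work.

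For the contraction estimate the plan is to peel off the pure policy-improvement contraction of the Bellman operator and treat the inexact evaluation separately. Writing $J_k=I-\gamma P^{\pi_{k+1}}$ and using that $\pi_{k+1}$ is greedy with respect to $V_k$, so that $T_{\pi_{k+1}}V_k=TV_k$, I would split
\begin{equation*}
\Vert V_{k+1}-V^*\Vert_{\infty}\leq \Vert V_{k+1}-T_{\pi_{k+1}}V_k\Vert_{\infty}+\Vert T_{\pi_{k+1}}V_k-V^*\Vert_{\infty}.
\end{equation*}
The second term is $\Vert TV_k-TV^*\Vert_{\infty}\leq\gamma\Vert V_k-V^*\Vert_{\infty}$ by $\gamma$-contractivity of $T$ (Proposition~\ref{prop: gamma_contractivity}), which accounts for the isolated $+\gamma$ in the target rate. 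For the first term I would use the residual identity $V_{k+1}-T_{\pi_{k+1}}V_k=\gamma P^{\pi_{k+1}}(V_{k+1}-V_k)-\rho_{k+1}$, where $\rho_{k+1}=g^{\pi_{k+1}}-J_kV_{k+1}$ is the policy-evaluation residual of the inexact iterate and satisfies $T_{\pi_{k+1}}V_{k+1}-V_{k+1}=\rho_{k+1}$. The stopping condition~\eqref{eq:simplified_stopping_condition} bounds $\Vert\rho_{k+1}\Vert_{\infty}\leq\alpha\Vert r(V_k)\Vert_{\infty}$, and the global Lipschitz bound $\Vert r(V_k)\Vert_{\infty}=\Vert r(V_k)-r(V^*)\Vert_{\infty}\leq(1+\gamma)\Vert V_k-V^*\Vert_{\infty}$ (\cite[Lemma 2]{gargiani_2023}) converts this into the $\alpha(1+\gamma)\Vert V_k-V^*\Vert_{\infty}$ contribution to the rate.

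It then remains to control $\gamma\Vert P^{\pi_{k+1}}(V_{k+1}-V_k)\Vert_{\infty}\leq\gamma\Vert V_{k+1}-V_k\Vert_{\infty}$ and to show that it supplies the final $\gamma(1+\gamma)\Vert V_k-V^*\Vert_{\infty}$ term, which amounts to a standalone infinity-norm bound $\Vert V_{k+1}-V_k\Vert_{\infty}\leq(1+\gamma)\Vert V_k-V^*\Vert_{\infty}$ on the inexact Newton step. This is the crux and the main obstacle. The naive estimate $V_{k+1}-V_k=-J_k^{-1}\left(r(V_k)+\rho_{k+1}\right)$ combined with $\Vert J_k^{-1}\Vert_{\infty}\leq 1/(1-\gamma)$ (\cite[Lemma 3]{gargiani_2023}) only yields $\frac{1+\alpha}{1-\gamma}\Vert r(V_k)\Vert_{\infty}$; likewise, routing the step through $V^*$, or invoking the worst-case bound~\eqref{eq:2.3.3Bertsekas} on $\Vert V^{\pi_{k+1}}-V^*\Vert_{\infty}$, reintroduces a factor $1/(1-\gamma)$ and collapses the argument back to the weaker regime $\gamma<1/3$ already obtained from~\eqref{eq:2.3.3Bertsekas}.

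The improvement to $\gamma<-1+\sqrt{2}$ must therefore come from exploiting the $\gamma$-contractivity of $T$ together with the two-sided residual estimate $(1-\gamma)\Vert V_k-V^*\Vert_{\infty}\leq\Vert r(V_k)\Vert_{\infty}\leq(1+\gamma)\Vert V_k-V^*\Vert_{\infty}$ directly on the step, rather than bounding the evaluation error and the greedy-improvement error separately and applying $J_k^{-1}$ to full-size vectors. I would expect the decisive step to be precisely this tight infinity-norm estimate of $V_{k+1}-V_k$ (equivalently of the Newton correction $V^{\pi_{k+1}}-V_k$) that keeps the discount structure intact and avoids the $1/(1-\gamma)$ amplification; once it is in place, summing the three contributions yields the claimed rate $(\alpha+\gamma)(1+\gamma)+\gamma$, and the algebraic step of the first paragraph certifies it is strictly below one.
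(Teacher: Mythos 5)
Your reduction of the hypotheses to $(\alpha+\gamma)(1+\gamma)+\gamma<1$ is correct and matches the paper, and your opening decomposition is in fact the paper's own: since $V_{k+1}-T_{\pi_{k+1}}V_k=\Delta V_k+r(V_k)$, splitting off $T_{\pi_{k+1}}V_k-V^*=TV_k-TV^*$ and invoking $\gamma$-contractivity is exactly how the isolated $+\gamma$ arises there too. The divergence, and the gap, is in how the remaining term $\Delta V_k+r(V_k)$ is handled. You regroup it as $\left[(I-\gamma P^{\pi_{k+1}})\Delta V_k+r(V_k)\right]+\gamma P^{\pi_{k+1}}\Delta V_k$, bound the bracket by $\alpha\Vert r(V_k)\Vert_{\infty}$ via the stopping condition, and are then left needing $\Vert\Delta V_k\Vert_{\infty}\leq(1+\gamma)\Vert V_k-V^*\Vert_{\infty}$. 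That estimate is not merely unproved in your write-up; it is false in general. Already for exact policy evaluation, $\Delta V_k=V^{\pi_{k+1}}-V_k$, and the standard two-state example that makes~\eqref{eq:2.3.3Bertsekas} tight (perturb $V^*$ by $\pm\epsilon$ so that the greedy policy picks a self-loop of cost $2\gamma\epsilon$) gives $\Vert\Delta V_k\Vert_{\infty}=\frac{1+\gamma}{1-\gamma}\Vert V_k-V^*\Vert_{\infty}>(1+\gamma)\Vert V_k-V^*\Vert_{\infty}$. So the ``tight infinity-norm estimate of the Newton correction'' you are hoping for does not exist, and the argument cannot be closed along this route.

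The paper instead regroups the other factor: it writes $\Delta V_k+r(V_k)=\left[\Delta V_k+(I-\gamma P^{\pi_{k+1}})r(V_k)\right]+\gamma P^{\pi_{k+1}}r(V_k)$, attributes the bound $\alpha\Vert r(V_k)\Vert_{\infty}$ on the bracket to the stopping condition, and bounds the leftover by $\gamma\Vert r(V_k)-r(V^*)\Vert_{\infty}\leq\gamma(1+\gamma)\Vert V_k-V^*\Vert_{\infty}$ via Lipschitz continuity of $r$, so that $\Vert\Delta V_k\Vert_{\infty}$ never needs to be controlled. You should be aware, however, that the quantity the stopping condition actually controls is $\Vert(I-\gamma P^{\pi_{k+1}})\Delta V_k+r(V_k)\Vert_{\infty}$ (your $\Vert\rho_{k+1}\Vert_{\infty}$), not $\Vert\Delta V_k+(I-\gamma P^{\pi_{k+1}})r(V_k)\Vert_{\infty}$; the two differ by $\gamma P^{\pi_{k+1}}(\Delta V_k-r(V_k))$. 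The same exact-evaluation example above yields $\Vert V_{k+1}-V^*\Vert_{\infty}$ arbitrarily close to $\frac{2\gamma}{1-\gamma}\Vert V_k-V^*\Vert_{\infty}$, which exceeds $(\gamma(1+\gamma)+\gamma)\Vert V_k-V^*\Vert_{\infty}$ for every $\gamma\in(0,1)$, so the obstruction you identified is genuine and is also the delicate point of the paper's step $(a)$. In short: your diagnosis of where the difficulty sits is accurate, but your proposal is incomplete, the missing lemma is false, and the resolution is not a sharper bound on the step $\Delta V_k$.
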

\begin{proof}
We derive an upper-bound on $\Vert V_{k+1} - V^* \Vert_{\infty}$ by deploying the properties of the infinity-norm, the stopping condition and the structural properties of the Bellman residual function and the Bellman operators. This leads to the following upper-bound
\begin{equation}
\begin{aligned}
&\Vert V_{k+1} - V^* \Vert_{\infty} = \Vert V_{k+1}  - V_k + V_k - TV_k + TV_k - V^* \Vert_{\infty}\\
&= \Vert \Delta V_k + r(V_k) + TV_k - V^* \Vert_{\infty}\\
&= \Vert \Delta V_k + \left( I - \gamma P^{\pi_{k+1}} \right)r(V_k) + \gamma P^{\pi_{k+1}}r(V_k) + TV_k - V^* \Vert_{\infty}\\
&\leq \Vert \Delta V_k + \left( I - \gamma P^{\pi_{k+1}} \right)r(V_k) \Vert_{\infty} +  \Vert \gamma P^{\pi_{k+1}}r(V_k) \Vert_{\infty} +  \Vert TV_k - V^* \Vert_{\infty}\\
&\overset{(a)}{\leq} \alpha\Vert r(V_k) \Vert_{\infty} +  \Vert \gamma P^{\pi_{k+1}}\left(r(V_k) - r(V^*)\right) \Vert_{\infty} +  \Vert TV_k - TV^* \Vert_{\infty}\\
&\overset{\phantom{(c)}}{\leq} \alpha\Vert r(V_k) \Vert_{\infty} +  \gamma\Vert  P^{\pi_{k+1}} \Vert_{\infty} \Vert\left(r(V_k) - r(V^*)\right) \Vert_{\infty} +  \Vert TV_k - TV^* \Vert_{\infty}\\
&\overset{(b)}{\leq} \alpha(1+\gamma)\Vert V_k - V^* \Vert_{\infty} +  \gamma(1+\gamma)\Vert V_k - V^* \Vert_{\infty}+  \gamma\Vert V_k - V^* \Vert_{\infty}\\
&= ((\alpha + \gamma)(1+\gamma) + \gamma)\Vert V_k - V^* \Vert_{\infty} \,,
\end{aligned}
\end{equation}
where $(a)$ follows from the stopping condition and $(b)$ from Lemma 2 in~\cite{gargiani_2023} and Proposition~\ref{prop: gamma_contractivity}. The final result follows from observing that $(\alpha + \gamma)(1+\gamma) + \gamma \in (0,1)$ if and only if $\gamma \in (0, -1 + \sqrt{2})$ and $\alpha \in (0, (1 - 2\gamma - \gamma^2)/(1+\gamma))$.
\end{proof}

Going back to Inequality~\eqref{eq: ineq_iPI_wrong}, we now assume that the sequence of iterates $\left\{ V_k \right\}$ generated by a general iPI method is such that $V_k \geq TV_k$ for $k \geq 0$.
This requirement has a strong relation with the concept of \textit{region of decreasing} as introduced in~\cite[Section 5.2]{yuchao_phd}. In particular, it corresponds to assuming that $\left\{ V_k \right\}$ is contained in this subset. 
The assumption is also comparable to some sort of policy improvement requirement as it implies that $V_k \geq V^{\pi_{k+1}}$. 
Indeed, by definition of greedy-policy, we have that $TV_k = T_{\pi_{k+1}}V_k$. Because of the monotonicity of the $T_{\pi}$-operator for any $\pi\in \Pi$, we have that $V_k\geq T_{\pi_{k+1}}V_k$ implies that $T_{\pi_{k+1}}V_k \geq V^{\pi_{k+1}}$. In addition, by definition of optimal cost, $V^{\pi}\geq V^*$ for any $\pi\in\Pi$. Putting these pieces together we have that 
\begin{equation}
TV_k \geq V^{\pi_{k+1}} \geq V^*\,.
\end{equation}
By subtracting the term $V^*$ using the fact that that $V^* = TV^*$, we obtain that
\begin{equation}\label{eq:ineq_TVk}
TV_k  - TV^* \geq V^{\pi_{k+1}} - V^* \geq 0\,,
\end{equation}
which implies the following
\begin{equation}\label{eq:ineq_TVk}
\Vert TV_k  - TV^* \Vert_{\infty} \geq \Vert V^{\pi_{k+1}} - V^* \Vert_{\infty}\,.
\end{equation}
By using~\eqref{eq:ineq_TVk} in~\eqref{eq: ineq_iPI_wrong} and exploiting the fact that the $T$-operator is $\gamma$-contractive in the infinity-norm, we obtain the following upper-bound 
\begin{equation}\label{eq:ineq_conv_policyimprovement}
\Vert V_{k+1} - V^* \Vert_{\infty} \leq  \frac{(1+\gamma)\alpha}{1-\gamma} \Vert V_{k} - V^* \Vert_{\infty} + \gamma \Vert V_{k} - V^* \Vert_{\infty}\,.
\end{equation}
From~\eqref{eq:ineq_conv_policyimprovement} we can then conclude global Q-linear convergence of a general iPI method with rate $\frac{(1 + \gamma)\alpha}{1 - \gamma} + \gamma$ if $\alpha \in (0,\, (1-\gamma)^2/(1+\gamma))$. 
Having an assumption on the generated sequence of iterates is not ideal as it is not possible to verify it a priori. It would therefore be more practical to modify the algorithm in order to enforce this condition, but this would obviously radically change the general iPI scheme.

In the following, we discuss global convergence of iPI methods. In particular, first we show global convergence to a neighborhood of the solution for the same interval of $\alpha$-values for which local contraction is guaranteed. Then, by introducing an extra requirement on the stage-cost bound, we show that there exists a non-empty interval of $\alpha$-values for which global convergence to the solution is guaranteed. Differently from  Proposition~\ref{prop: global_gamma}, these results hold for any $\gamma\in(0,1)$.
\begin{theorem}[global convergence to a neighborhood of $V^*$]\label{th: global_conv_iPI_ball}
Consider a general infinite-horizon discounted MDP with finite spaces and let $\left\{ V_k \right\}$ be the sequence of iterates generated by a general iPI method as described in Algorithm~\ref{alg:iPI}. If $\alpha \in (0,\,(1-\gamma)/(1+\gamma))$, then, for any $V_0 \in \mathbb{R}^n$, $\left\{V_k\right\}$ asymptotically converges to $\mathcal{B}(V^*, 2R/((1-\gamma)(1-\zeta)))$, where $\zeta = \frac{1+\gamma}{1-\gamma}\alpha$.
\end{theorem}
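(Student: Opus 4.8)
The plan is to build directly on the bound already established in Proposition~\ref{prop: upper_bound}, namely
\begin{equation*}
\Vert V_{k+1} - V^* \Vert_{\infty} \leq \zeta \Vert V_k - V^* \Vert_{\infty} + \Vert V^{\pi_{k+1}} - V^* \Vert_{\infty}\,,
\end{equation*}
with $\zeta = \frac{1+\gamma}{1-\gamma}\alpha$, and to observe that the hypothesis $\alpha \in (0,(1-\gamma)/(1+\gamma))$ is precisely what guarantees $\zeta \in (0,1)$. The only term in this recursion not yet expressed through $\Vert V_k - V^*\Vert_{\infty}$ is $\Vert V^{\pi_{k+1}} - V^* \Vert_{\infty}$, so the entire content of the proof is to control that term by a uniform, $k$-independent constant.

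First I would establish the uniform bound. Since the stage cost takes values in $[-R,R]$, the cost of any policy is a discounted infinite sum of stage costs, and the geometric series gives $\Vert V^{\pi}\Vert_{\infty} \leq R/(1-\gamma)$ for every $\pi \in \Pi$ (equivalently, $V^{\pi} = (I-\gamma P^{\pi})^{-1}g^{\pi}$ together with the bound $\Vert (I-\gamma P^{\pi})^{-1}\Vert_{\infty}\leq 1/(1-\gamma)$ recalled in Section~\ref{sec3}). Applying this to $\pi_{k+1}$ and to an optimal policy, the triangle inequality yields
\begin{equation*}
\Vert V^{\pi_{k+1}} - V^* \Vert_{\infty} \leq \Vert V^{\pi_{k+1}}\Vert_{\infty} + \Vert V^*\Vert_{\infty} \leq \frac{2R}{1-\gamma}
\end{equation*}
for all $k \geq 0$. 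This is the step that makes the argument work with no structural assumptions on the MDP and for every $\gamma \in (0,1)$: however inexact the inner solve is, the policy cost appearing in the bound is automatically bounded.

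Substituting this constant into the recursion produces the scalar affine recurrence $a_{k+1} \leq \zeta\, a_k + \frac{2R}{1-\gamma}$ for $a_k := \Vert V_k - V^*\Vert_{\infty}$. Unrolling it and summing the resulting geometric series gives
\begin{equation*}
\Vert V_k - V^* \Vert_{\infty} \leq \zeta^k \Vert V_0 - V^* \Vert_{\infty} + \frac{2R}{1-\gamma}\sum_{j=0}^{k-1}\zeta^j \leq \zeta^k \Vert V_0 - V^* \Vert_{\infty} + \frac{2R}{(1-\gamma)(1-\zeta)}\,.
\end{equation*}
Letting $k \to \infty$ and using $\zeta \in (0,1)$, the first term vanishes and one obtains $\limsup_{k\to\infty}\Vert V_k - V^*\Vert_{\infty} \leq 2R/((1-\gamma)(1-\zeta))$, i.e.\ asymptotic convergence to the claimed ball.

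I do not anticipate a serious obstacle: the whole difficulty is isolating the uniform bound on $\Vert V^{\pi_{k+1}} - V^* \Vert_{\infty}$, after which the statement reduces to the elementary analysis of a contracting affine scalar recurrence. The only point requiring mild care is interpreting ``converges to $\mathcal{B}(V^*,\cdot)$'': the $\limsup$ estimate shows that the iterates eventually enter any ball of radius exceeding the stated one, which is the intended meaning, so the strict-versus-nonstrict radius convention is immaterial here.
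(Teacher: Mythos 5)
Your proof is correct, and it takes a genuinely different and more economical route than the paper's. The paper does not simply plug a uniform constant into Proposition~\ref{prop: upper_bound}; instead it introduces an auxiliary \emph{exact} PI sequence $\{\pi^{\text{PI}}_{k}\}$ started from the same $V_0$, splits $\Vert V^{\pi_{k+1}} - V^*\Vert_{\infty}$ through $V^{\pi^{\text{PI}}_{k+1}}$, bounds the cross term $\Vert J_{\pi}^{-1}(g^{\pi_{k+1}} - g^{\pi^{\text{PI}}_{k+1}})\Vert_{\infty}$ by $2R/(1-\gamma)$ via the stage-cost bound, and controls the remaining term by the $\gamma$-contraction of exact PI, so that the unrolled recursion contains three terms (a $\zeta^{k+1}$ term, a mixed $\gamma^{k+1}\sum_j(\zeta/\gamma)^j$ term, and the geometric series in $\zeta$) and the limit requires a case analysis on $\zeta<\gamma$, $\zeta>\gamma$, $\zeta=\gamma$. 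You bypass all of this by observing that $\Vert V^{\pi}\Vert_{\infty}\le R/(1-\gamma)$ for every policy, hence $\Vert V^{\pi_{k+1}} - V^*\Vert_{\infty}\le 2R/(1-\gamma)$ uniformly in $k$, which turns Proposition~\ref{prop: upper_bound} into a scalar affine recurrence $a_{k+1}\le \zeta a_k + 2R/(1-\gamma)$ whose fixed-point analysis immediately gives the claimed ball. Both arguments land on the same radius $2R/((1-\gamma)(1-\zeta))$; yours is shorter, avoids the comparison-sequence bookkeeping and the three-way limit case analysis, and your finite-$k$ bound is in fact no weaker than the paper's (the paper's intermediate bound carries an additional nonnegative decaying term). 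What the paper's construction buys is the explicit exhibition of the transient $\gamma^{k+1}\Vert V^{\pi_0}-V^*\Vert_{\infty}$ contribution, which makes visible how the exact-PI contraction would enter if the constant offset were absent, but it contributes nothing to the asymptotic statement being proved.
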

\begin{proof}
In the following, we denote with $\left\{ \pi_{k+1} \right\}_{k \geq 0}$ and $\left\{ \pi^{\text{\tiny{PI}}}_k \right\}_{k\geq 0}$ the sequences of greedy policies that are generated by iPI and exact PI starting from $V_0$ and $\tilde{V}$, respectively. Let $\bar{V}\in\mathbb{R}^n$, we assume that $V_0 = \tilde{V} = \bar{V}$ and that the first policy for both sequences is initialized with the same greedy policy with respect to $\bar{V}$. Therefore, $\pi_1 = \pi^{\text{PI}}_0$ while for $k>0$ the equality does not necessarily hold since in general $V_k \neq V^{\pi_k}$. We also use $J_{\pi_{k+1}}$ to denote $I - \gamma P^{\pi_{k+1}}$ and $J_{\pi^{\text{PI}}_{k+1}}$ to denote $I - \gamma P^{\pi^{\text{PI}}_{k+1}}$. Following similar steps as in the proofs of Proposition~\ref{prop: upper_bound} and~\ref{prop: global_gamma}, we upper-bound $\Vert V_{k+1} - V^*\Vert_{\infty}$ by using the fundamental properties of the infinity-norm, the structural properties of the Bellman residual function and the stopping-condition. In particular, we obtain the following upper-bound 
\begin{equation}\label{eq: upperbound_iPI_first}
\begin{aligned}
&\Vert V_{k+1} - V^* \Vert_{\infty} = \Vert V_k - V^* + \Delta V_k \Vert_{\infty} \\
&= \Vert J_{\pi_{k+1}}^{-1}\left(J_{\pi_{k+1}} V_k - g^{\pi_{k+1}} + J_{\pi_{k+1}} \Delta V_k \right) + J_{\pi_{k+1}}^{-1}g^{\pi_{k+1}} + V^{\pi^{\text{PI}}_{k+1}} - V^* - V^{\pi^{\text{PI}}_{k+1}} \Vert_{\infty} \\
& \leq  \Vert J_{\pi_{k+1}}^{-1}\left(J_{\pi_{k+1}} V_k - g^{\pi_{k+1}} + J_{\pi_{k+1}} \Delta V_k \right) \Vert_{\infty}  + \Vert J_{\pi_{k+1}}^{-1}g^{\pi_{k+1}} - V^{\pi^{\text{PI}}_{k+1}} \Vert_{\infty} \\
& \phantom{\leq}\,\, + \Vert V^{\pi^{\text{PI}}_{k+1}} - V^*  \Vert_{\infty} \\
& =  \Vert J_{\pi_{k+1}}^{-1}\left(J_{\pi_{k+1}} V_k - g^{\pi_{k+1}} + J_{\pi_{k+1}} \Delta V_k \right) \Vert_{\infty}  + \Vert J_{\pi_{k+1}}^{-1}g^{\pi_{k+1}} - J_{\pi^{\text{PI}}_{k+1}}^{-1} g^{\pi^{\text{PI}}_{k+1}} \Vert_{\infty} \\
& \phantom{\leq}\,\, + \Vert  V^{\pi^{\text{PI}}_{k+1}} - V^*  \Vert_{\infty} \\
& \leq  \frac{1+\gamma}{1-\gamma}\alpha\Vert V_k - V^* \Vert_{\infty}  + \max_{\pi \,\,\in\,\, \left\{\pi_{k+1}, \, \pi^{\text{PI}}_{k+1}\right\}}\Vert J_{\pi}^{-1} ( g^{\pi_{k+1}} -  g^{\pi^{\text{PI}}_{k+1}}) \Vert_{\infty} \\
& \phantom{\leq}\,\, + \Vert  V^{\pi^{\text{PI}}_{k+1}} - V^*  \Vert_{\infty} \\
& \leq  \frac{1+\gamma}{1-\gamma}\alpha\Vert V_k - V^* \Vert_{\infty}  + \max_{\pi \,\,\in\,\, \left\{\pi_{k+1}, \, \pi^{\text{PI}}_{k+1}\right\}}\Vert J_{\pi}^{-1}\Vert_{\infty} \Vert  g^{\pi_{k+1}} -  g^{\pi^{\text{PI}}_{k+1}} \Vert_{\infty} \\
& \phantom{\leq}\,\, + \Vert  V^{\pi^{\text{PI}}_{k+1}} - V^*  \Vert_{\infty} \\
& \leq  \frac{1+\gamma}{1-\gamma}\alpha\Vert V_k - V^* \Vert_{\infty}  + \frac{1}{1-\gamma} \Vert  g^{\pi_{k+1}} -  g^{\pi^{\text{PI}}_{k+1}} \Vert_{\infty} + \Vert  V^{\pi^{\text{PI}}_{k+1}} - V^*  \Vert_{\infty} \\
& \overset{(a)}{\leq}  \frac{1+\gamma}{1-\gamma}\alpha\Vert V_k - V^* \Vert_{\infty}   + \Vert  V^{\pi^{\text{PI}}_{k+1}} - V^*  \Vert_{\infty} + \frac{2R}{1-\gamma}\\
&\overset{(b)}{\leq} \frac{1+\gamma}{1-\gamma}\alpha\Vert V_k - V^* \Vert_{\infty}   + \gamma^{k+1}\Vert  V^{\pi_0} - V^*  \Vert_{\infty} + \frac{2R}{1-\gamma}\,,
\end{aligned}
\end{equation}
where Inequality $(a)$ follows from the fact that, for any $\pi\in\Pi$, $g^{\pi} : \mathcal{S}\times\mathcal{A} \rightarrow [-R, \,R]$, and Inequality $(b)$ follows from Proposition~\ref{prop: PI_convergence}.

By applying Inequality~\eqref{eq: upperbound_iPI_first} recursively, we obtain the following upper-bound
\begin{equation}\label{eq: upperbound_recursive}
\begin{aligned}
\Vert V_{k+1} - V^* \Vert_{\infty} \leq\,\, & \zeta^{k+1} \Vert V_0 - V^* \Vert_{\infty} + \Vert V^{\pi_0} - V^* \Vert_{\infty}  \gamma^{k+1} \sum_{j=0}^k \left(\frac{\zeta}{\gamma} \right)^j \\
&\quad\, + \frac{2R}{1 - \gamma}\sum_{j=0}^k \zeta^j\,,
\end{aligned}
\end{equation}
where $\zeta = \frac{1+\gamma}{1-\gamma}\alpha$. 
We now take the limit for $k\rightarrow \infty$ of~\eqref{eq: upperbound_recursive}. Since $\alpha \in (0, (1-\gamma)/(1+\gamma))$, then $\zeta < 1$ and therefore $\lim_{k\rightarrow \infty} \zeta^{k+1} \Vert {V}_0 - V^* \Vert_{\infty} = 0$ and $\lim_{k\rightarrow\infty} \frac{2R}{1-\gamma}\sum_{j=0}^k \zeta^j = \frac{2R}{(1-\gamma)(1-\zeta)} $. To study the limit of the second term in the RHS of~\eqref{eq: upperbound_recursive}, we distinguish between three scenarios: $\zeta < \gamma$, $\zeta > \gamma$ and $\zeta = \gamma$. In the first scenario, by using the properties of the geometric series with common ratio strictly less than $1$, we can directly conclude that $\lim_{k\rightarrow \infty} \gamma^{k+1} \sum_{j=0}^k \left( \frac{\zeta}{\gamma}\right)^j = 0 \cdot \frac{1}{1 - \frac{\zeta}{\gamma}} = 0$. For the second scenario, we can apply a similar argument starting from the observation that $\gamma^{k+1}\sum_{j=0}^k \left( \frac{\zeta}{\gamma} \right)^j$ can be equivalently rewritten as $\gamma\zeta^k \sum_{j=0}^k \left( \frac{\gamma}{\zeta} \right)^j$. Finally, when $\zeta = \gamma$, $ \gamma^{k+1}\sum_{j=0}^k \left( \frac{\zeta}{\gamma} \right)^j = (k+1)\gamma^{k+1}$. By using l'H$\hat{\text{o}}$pital's rule, we can conclude that $\lim_{k\rightarrow \infty} (k+1)\,\gamma^{k+1} = \lim_{k\rightarrow \infty}  \frac{\gamma^{k-1}}{k} = 0$. Therefore, by combining these derivations, we obtain the following upper-bound 
\begin{equation}
\lim_{k\rightarrow \infty} \Vert V_{k+1} - V^* \Vert_{\infty} \leq  \frac{2R}{(1 - \gamma)(1-\zeta)}\,,
\end{equation}
from which we can conclude that the sequence of iterates generated by a general iPI method asymptotically converges to $\mathcal{B}(V^*, \, 2R/((1 - \gamma)(1-\zeta)) )$. 
\end{proof}

\begin{corollary}
Consider the same conditions of Theorem~\ref{th: global_conv_iPI_ball}. If $R < \frac{(1-\gamma)^{3/2}\delta}{2}$, then $\left\{ \Vert V_k - V^* \Vert_{\infty} \right\}$ asymptotically converges to zero for any $V_0 \in \mathbb{R}^n$.
\end{corollary}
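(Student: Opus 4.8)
The plan is to combine the two convergence results already established for iPI: the global ``convergence to a ball'' result of Theorem~\ref{th: global_conv_iPI_ball} and the local Q-linear result of Theorem~\ref{th: iPI_localconvergence}. The key idea is that the extra smallness hypothesis on the stage-cost bound $R$ forces the attractor ball produced by the global theorem to sit strictly inside the region of attraction of the local theorem; once the tail of the sequence has entered that region, local convergence takes over and drives $\{\Vert V_k - V^* \Vert_\infty\}$ to zero for \emph{any} starting point $V_0$.

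Concretely, I would proceed as follows. First, invoke Theorem~\ref{th: global_conv_iPI_ball}: since $\alpha \in (0,(1-\gamma)/(1+\gamma))$, the iterates satisfy $\limsup_{k\to\infty}\Vert V_k - V^* \Vert_\infty \le \rho$, where $\rho = \tfrac{2R}{(1-\gamma)(1-\zeta)}$ and $\zeta = \tfrac{1+\gamma}{1-\gamma}\alpha < 1$. Second, recall from Theorem~\ref{th: iPI_localconvergence} that $\mathcal{B}(V^*,\omega)$, with $\omega = \tfrac{(1-\gamma)-(1+\gamma)\alpha}{C_{\min}}$, is a region of attraction on which the iterates converge Q-linearly with rate $\zeta$. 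Substituting $C_{\min} = \tfrac{2R}{(1-\gamma)\delta^2}$ from Lemma~\ref{lemma: strong_semismoothness} and using the identity $(1-\gamma)-(1+\gamma)\alpha = (1-\gamma)(1-\zeta)$ gives the explicit radius
\[
\omega = \frac{(1-\gamma)^2(1-\zeta)\delta^2}{2R}.
\]
The heart of the argument is then the nesting $\mathcal{B}(V^*,\rho)\subset \mathcal{B}(V^*,\omega)$, i.e. $\rho<\omega$.

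Writing out $\rho<\omega$ and cross-multiplying (all factors are positive) turns it into a quadratic-in-$R$ inequality of the form $4R^2 < (1-\gamma)^3(1-\zeta)^2\delta^2$. This is the step that produces both the $(1-\gamma)^{3/2}$ power and the linear dependence on $\delta$ recorded in the hypothesis: the smallness of $R$ is precisely what counteracts the opposing monotonicities in $R$ of the two radii, since $\rho$ grows with $R$ while $\omega$ shrinks with $R$ through $C_{\min}$. Verifying that the stated bound $R < \tfrac{(1-\gamma)^{3/2}\delta}{2}$ secures this nesting is the quantitative core of the proof. Once $\rho<\omega$ holds, I would fix $K$ large enough that $V_K \in \mathcal{B}(V^*,\omega)$ (possible because the $\limsup$ lies strictly below $\omega$), observe that the tail $\{V_k\}_{k\ge K}$ is itself an iPI sequence with constant forcing parameter $\alpha\in(0,(1-\gamma)/(1+\gamma))$ and therefore satisfies the hypotheses of Theorem~\ref{th: iPI_localconvergence} when initialised at $V_K$, and conclude $\Vert V_k - V^* \Vert_\infty \to 0$.

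The main obstacle is this quantitative nesting step: because $\rho$ and $\omega$ depend on $R$ in opposite directions, the comparison is genuinely quadratic rather than linear in $R$, and one must track the constants carefully (including the dependence on $\zeta$ through $1-\zeta$) to land on the stated $(1-\gamma)^{3/2}\delta$ threshold. A secondary, more conceptual point that needs care is the passage from ``asymptotic convergence to the ball $\mathcal{B}(V^*,\rho)$'' to ``entry into the open region of attraction'': the $\limsup$ characterisation only guarantees eventual entry when $\rho$ is \emph{strictly} smaller than $\omega$, which is why the hypothesis is stated with a strict inequality and why restarting the local analysis from a tail index $K$ is legitimate.
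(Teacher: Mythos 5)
Your overall strategy is the same as the paper's: use Theorem~\ref{th: global_conv_iPI_ball} to drive the iterates into the ball of radius $\rho = \frac{2R}{(1-\gamma)(1-\zeta)}$, require that this ball be nested inside the region of attraction $\mathcal{B}(V^*,\omega)$ of Theorem~\ref{th: iPI_localconvergence}, and then let the local Q-linear contraction finish the job. Your explicit treatment of the hand-off (the $\limsup$ lying strictly below $\omega$ guarantees that some tail iterate $V_K$ enters the region of attraction, after which the local theorem applies to the restarted sequence) is actually spelled out more carefully than in the paper, which leaves that step implicit.

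However, the step you defer as ``the quantitative core'' is precisely where the argument as you have set it up does not close, and where it diverges from what the paper actually proves. Your nesting condition $4R^2 < (1-\gamma)^3(1-\zeta)^2\delta^2$ is correct, but it is equivalent to $R < \frac{(1-\gamma)^{3/2}(1-\zeta)\delta}{2}$, which carries an extra factor $(1-\zeta)\in(0,1)$ relative to the stated hypothesis $R < \frac{(1-\gamma)^{3/2}\delta}{2}$. For a fixed $R>0$ satisfying the stated hypothesis, the nesting $\rho<\omega$ therefore \emph{fails} whenever $\alpha$ is close enough to $(1-\gamma)/(1+\gamma)$ (i.e.\ $\zeta$ close to $1$), so the hypothesis on $R$ alone does not ``secure the nesting'' for every $\alpha$ admissible under Theorem~\ref{th: global_conv_iPI_ball}. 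The paper resolves this by treating the nesting inequality as a quadratic in $\alpha$, namely $(1+\gamma)^2\alpha^2 - 2(1-\gamma)(1+\gamma)\alpha + (1-\gamma)^2 - 2RC_{\min} \geq 0$, whose admissible solution set is $\alpha \leq \alpha_1 = \frac{(1-\gamma)-\sqrt{2RC_{\min}}}{1+\gamma}$; the hypothesis $R < \frac{(1-\gamma)^{3/2}\delta}{2}$ is exactly the condition $\sqrt{2RC_{\min}}<1-\gamma$ that makes $\alpha_1>0$, i.e.\ that makes this interval of forcing parameters non-empty. To complete your proof you must either carry out this computation and add the restriction $\alpha\in(0,\alpha_1]$ to your conclusion, or accept the $\zeta$-dependent threshold $R < \frac{(1-\gamma)^{3/2}(1-\zeta)\delta}{2}$ in place of the stated one; as written, the claim that the stated bound on $R$ implies $\rho<\omega$ for the given $\alpha$ is false.
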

\begin{proof}

We deploy the results of Theorem~\ref{th: iPI_localconvergence} and~\ref{th: global_conv_iPI_ball} and derive a condition on $\alpha$ such that the estimated radius of the global region of convergence is smaller than the radius of the local region of attraction. This boils down to solving the following inequality in $\alpha$
\begin{equation}\label{eq: ineq_balls}
\frac{2R}{(1-\gamma)(1-\zeta)} \leq \frac{(1-\gamma) - (1 + \gamma)\alpha}{C_{\min}}\,.
\end{equation}
With some algebraic manipulation, we can rewrite~\eqref{eq: ineq_balls} as follow
\begin{equation}\label{eq:ineq_alpha}
(1+\gamma)^2\alpha^2 - 2(1-\gamma)(1+\gamma)\alpha + (1-\gamma)^2 - 2RC_{\min} \geq 0\,.
\end{equation}
Inequality~\eqref{eq:ineq_alpha} is verified for $\alpha \leq \alpha_1$ and $\alpha \geq \alpha_2$, where
\begin{equation}
\alpha_{1,2} = \frac{(1-\gamma) \pm \sqrt{2RC_{\min}}}{(1+\gamma)}\,.
\end{equation}  
The second interval can not be accepted since to deploy the local contraction results we need $\alpha \in (0, (1-\gamma)/(1+\gamma))$ and $\alpha_1 \geq (1-\gamma)/(1+\gamma)$ by construction since $R, C_{\min} \geq 0$. The first interval is valid as it is always contained in $(0, (1-\gamma)/(1+\gamma))$ if $R < (1-\gamma)^{3/2}\delta/2$, where the latter requirement indeed ensures that $\sqrt{2RC_{\min}}<(1-\gamma)$. 
\end{proof}


Algorithm~\ref{alg:iPI} is well-posed if the stopping condition for the inexact policy evaluation step can be verified by the iterative linear solver in a finite number of iterations. The following proposition provides guarantees that the stopping condition is verified in a finite number of iterations whenever the iterative linear solver generates a sequence of residuals (iterates) that is linearly contractive to zero (the solution) in some norm. 

\begin{proposition}\label{prop: iPI_stoppingcondition}
    Consider a general infinite-horizon discounted MDP with finite spaces and the linear system for the evaluation of the cost associated to a policy $\pi\in\Pi$. 
Let $z,\,\alpha\in(0,1)$ and $C_*\geq 1$ be constants, $\Vert \cdot \Vert_*$ an arbitrary norm and $\Phi^{\pi}:\mathbb{R}^n \rightarrow \mathbb{R}^n$ the function with $\Phi^{\pi}(V) = g^{\pi} - \left( I - \gamma P^{\pi} \right)V $. Consider an iterative solver that generates a sequence of iterates $\left\{\theta_i\right\}$ such that
$\Vert \Phi^{\pi}(\theta_i) \Vert_* \leq z^i\Vert \Phi^{\pi}(\theta_0) \Vert_*
$ or
$\Vert \theta_i - V^{\pi} \Vert_* \leq z^i\Vert \theta_0 - V^{\pi} \Vert_*
$ for all $i\geq 0$ and $\theta_0 \in \mathbb{R}^n$.
Then 
\begin{equation}
\Vert \Phi^{\pi}(\theta_i) \Vert_{\infty} \leq \alpha\,\Vert \Phi^{\pi}(\theta_0) \Vert_{\infty} 
\end{equation}
for all $i\geq \lceil  \log_z\frac{\alpha}{C_*}\frac{1-\gamma}{1+\gamma}\rceil$.
\end{proposition}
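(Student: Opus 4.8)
The plan is to convert the solver's linear contraction, which holds only in the arbitrary norm $\Vert \cdot \Vert_*$, into an infinity-norm contraction of the \emph{residual} $\Phi^{\pi}$, and then solve for the smallest iteration index $i$ at which the stopping threshold $\alpha\Vert \Phi^{\pi}(\theta_0)\Vert_{\infty}$ is met. The cornerstone is a two-sided relationship between residual and error in the infinity-norm. Since $g^{\pi} = (I - \gamma P^{\pi})V^{\pi}$, one has the identity $\Phi^{\pi}(V) = (I - \gamma P^{\pi})(V^{\pi} - V)$. Because $P^{\pi}$ is row-stochastic, $\Vert I - \gamma P^{\pi}\Vert_{\infty} \leq 1 + \gamma$, while the bound $\Vert (I - \gamma P^{\pi})^{-1}\Vert_{\infty} \leq 1/(1-\gamma)$ recalled earlier from~\cite{gargiani_2023} gives the reverse estimate. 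Together they yield the sandwich
\begin{equation*}
(1-\gamma)\Vert V - V^{\pi}\Vert_{\infty} \leq \Vert \Phi^{\pi}(V)\Vert_{\infty} \leq (1+\gamma)\Vert V - V^{\pi}\Vert_{\infty}
\end{equation*}
for every $V \in \mathbb{R}^n$, which is precisely what produces the factor $\frac{1-\gamma}{1+\gamma}$ in the final threshold.

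With this in hand I would treat the two hypotheses in parallel, using the equivalence of $\Vert \cdot \Vert_*$ and $\Vert \cdot \Vert_{\infty}$ through the constant $C_*$ to move between the two norms. In the residual-contraction case the passage is direct: bound $\Vert \Phi^{\pi}(\theta_i)\Vert_{\infty}$ by $C_*\Vert \Phi^{\pi}(\theta_i)\Vert_*$, insert the contraction $\Vert \Phi^{\pi}(\theta_i)\Vert_* \leq z^i\Vert \Phi^{\pi}(\theta_0)\Vert_*$, and convert back to the infinity-norm. In the iterate-contraction case I would first apply the upper half of the sandwich to pass from $\Vert \Phi^{\pi}(\theta_i)\Vert_{\infty}$ to $(1+\gamma)\Vert \theta_i - V^{\pi}\Vert_{\infty}$, switch to $\Vert \cdot \Vert_*$ via $C_*$, invoke the error contraction $\Vert \theta_i - V^{\pi}\Vert_* \leq z^i\Vert \theta_0 - V^{\pi}\Vert_*$, switch back, and finally apply the lower half of the sandwich to rewrite $\Vert \theta_0 - V^{\pi}\Vert_{\infty}$ in terms of $\Vert \Phi^{\pi}(\theta_0)\Vert_{\infty}$. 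This chain produces the estimate $\Vert \Phi^{\pi}(\theta_i)\Vert_{\infty} \leq \frac{1+\gamma}{1-\gamma}C_* z^i\Vert \Phi^{\pi}(\theta_0)\Vert_{\infty}$, whose prefactor dominates the one from the residual case.

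It then remains to impose $\frac{1+\gamma}{1-\gamma}C_* z^i \leq \alpha$ and solve for $i$. Rearranging gives $z^i \leq \frac{\alpha}{C_*}\frac{1-\gamma}{1+\gamma}$, and taking $\log_z$ --- which reverses the inequality since $z \in (0,1)$ --- together with rounding up to the nearest integer yields the stated bound $i \geq \lceil \log_z \frac{\alpha}{C_*}\frac{1-\gamma}{1+\gamma}\rceil$; because this threshold is the larger of the two arising from the two cases (the factor $\frac{1-\gamma}{1+\gamma}<1$ shrinks the argument, and $\log_z$ is decreasing), it simultaneously certifies the residual-contraction case. The main obstacle I anticipate is the bookkeeping of the norm-equivalence constant: one must fix a normalization of $\Vert \cdot \Vert_*$, say $\Vert x \Vert_* \leq \Vert x \Vert_{\infty} \leq C_*\Vert x \Vert_*$, so that exactly one factor of $C_*$ rather than $C_*^2$ survives each chain and matches the form of the threshold; keeping the residual-to-error and error-to-residual conversions on the correct side of each inequality is the other place where care is required.
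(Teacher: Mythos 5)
Your proposal is correct and matches the paper's own argument essentially step for step: the same two-sided residual--error estimate via $\Vert I-\gamma P^{\pi}\Vert_{\infty}\leq 1+\gamma$ and $\Vert (I-\gamma P^{\pi})^{-1}\Vert_{\infty}\leq 1/(1-\gamma)$, the same single factor of $C_*$ from norm equivalence, and the same observation that the iterate-contraction case yields the binding threshold $\lceil \log_z\frac{\alpha}{C_*}\frac{1-\gamma}{1+\gamma}\rceil$, which then covers the residual-contraction case as well. No gaps.
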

\begin{proof}
Because of the equivalence of norms on finite-dimensional spaces, we know that there exists $C_{*}\geq 1$ such that, if $\Vert  \Phi^{\pi}(\theta_i) \Vert_{*} \leq z^i \,\Vert \Phi^{\pi}(\theta_0) \Vert_{*}$, then $\Vert \Phi^{\pi}(\theta_i)  \Vert_{\infty} \leq z^i \, C_*\,\Vert \Phi^{\pi}(\theta_0) \Vert_{\infty}$. Analogously, if $\Vert \theta_{i} - V^{\pi} \Vert_{*} \leq z^i\,\Vert \theta_{0} - V^{\pi} \Vert_{*}$, then $\Vert \theta_{i} - V^{\pi} \Vert_{\infty} \leq z^i\,C_*\,\Vert \theta_{0} - V^{\pi} \Vert_{\infty}$.
From these relations we can easily see that, in the first scenario, the condition $\Vert \Phi^{\pi}(\theta_i) \Vert_{\infty} \leq \alpha\Vert \Phi^{\pi}(\theta_0) \Vert_{\infty}$ holds if $z^{i} C_* \leq \alpha$, which leads to $i \geq \lceil \log_z \left( {\alpha}/{C_*}\right) \rceil$. To find a lower-bound on the number of iterations such that $\Vert \Phi^{\pi}(\theta_i)\Vert_{\infty} \leq \alpha  \Vert \Phi^{\pi}(\theta_0)\Vert_{\infty}$ for the second scenario, we start by upper-bounding the infinity-norm of $\Phi^{\pi}(\theta_i)$
\begin{equation}\label{eq: uppberbound_i}
\begin{aligned}
\Vert \Phi^{\pi}(\theta_i) \Vert_{\infty} &= \Vert g^{\pi} - \left( I - \gamma P^{\pi} \right)\theta_i \Vert_{\infty}\\
&= \Vert  \left( I - \gamma P^{\pi} \right)\left( \left( I - \gamma P^{\pi} \right)^{-1} g^{\pi} - \theta_i\right) \Vert_{\infty}\\
&\overset{(a)}{\leq} \Vert I - \gamma P^{\pi} \Vert_{\infty} \Vert \left( I - \gamma P^{\pi} \right)^{-1} g^{\pi} - \theta_i \Vert_{\infty}\\
&= \Vert I - \gamma P^{\pi} \Vert_{\infty} \Vert  V^{\pi} - \theta_i \Vert_{\infty}\\
&\overset{(b)}{\leq} z^i\,C_*\,\Vert I - \gamma P^{\pi} \Vert_{\infty} \Vert \theta_0 - V^{\pi} 
\Vert_{\infty}\\
&= z^i\,C_*\,\Vert I - \gamma P^{\pi} \Vert_{\infty} \Vert \left(I-\gamma P^{\pi}\right)^{-1}\left(\left(I-\gamma P^{\pi}\right)\theta_0 - g^{\pi}\right) \Vert_{\infty} \\
&\overset{(c)}{\leq}  z^i\,C_*\,\Vert I - \gamma P^{\pi} \Vert_{\infty} \Vert \left(I - \gamma P^{\pi}\right)^{-1} \Vert_{\infty} \Vert \left(I-\gamma P^{\pi}\right)\theta_0 - g^{\pi} \Vert_{\infty}  \\
&=  z^i\,C_*\,\Vert I - \gamma P^{\pi} \Vert_{\infty} \Vert \left(I - \gamma P^{\pi}\right)^{-1} \Vert_{\infty} \Vert \Phi^{\pi}(\theta_0) \Vert_{\infty}  \\
&\overset{(d)}{\leq} z^i \, C_*\, \frac{1+\gamma}{1-\gamma}\, \Vert \Phi^{\pi}(\theta_0) \Vert_{\infty} \,,
\end{aligned}
\end{equation}
where Inequalities $(a)$ and $(c)$ follow from the application of the triangle inequality, Inequality $(b)$ from the convergence properties of the iterative linear solver in the infinity-norm and Inequality $(d)$ from the upper-bounds on the infinity-norm of the matrix $I-\gamma P^{\pi}$ and its inverse. From~\eqref{eq: uppberbound_i} it follows that $\Vert \Phi^{\pi}(\theta_i) \Vert_{\infty} \leq \alpha \Vert \Phi^{\pi}(\theta_0) \Vert_{\infty}$ if $z^{i}\, C_*\, \frac{1+\gamma}{1-\gamma}\leq \alpha$, which leads to $i\geq \lceil \log_z \left( \alpha/C_* \cdot\, (1-\gamma)/(1+\gamma)\right)  \rceil$.  Therefore we can conclude that, since $(1-\gamma)/(1+\gamma)<1$ and $z<1$, in both scenarios $i\geq \lceil \log_z \left( \alpha/C_* \cdot\, (1-\gamma)/(1+\gamma)\right)  \rceil$ is a sufficient number of iterations to guarantee that $\Vert \Phi^{\pi}(\theta_i) \Vert_{\infty} \leq \alpha\Vert \Phi^{\pi}(\theta_0) \Vert_{\infty}$.
\end{proof}

In the context of iPI, as described in Algorithm~\ref{alg:iPI}, at the $k$-th iteration we consider the linear system associated to policy $\pi_{k+1}$ and $\theta_0$ is initialized with the current outer iterate $V_k$ (step 5 in Algorithm~\ref{alg:iPI}). The approximate solution $\theta_i$ generated by the inner solver after $i>0$ iterations is then used to update the outer iterate (step 11 in Algorithm~\ref{alg:iPI}). The results of Proposition~\ref{prop: iPI_stoppingcondition} ensure that iPI methods are well-posed provided an inner solver which is $z$-contractive in some arbitrary norm, since the stopping condition of the inner loop is verified in at most $\lceil  \log_z\frac{\alpha}{C_*}\frac{1-\gamma}{1+\gamma}\rceil$ inner iterations.
      
\subsection{Iterative Linear Solvers for Policy Evaluation}\label{sec: iterative_LS}

Inexact policy iteration methods require at every iteration the approximate solution of a linear system of the following form
\begin{equation}\label{eq: policy_eval}
\left( I - \gamma P^{\pi} \right) \Delta \theta = -r^{\pi}(\bar{V})\quad\quad \pi \in \Pi\,,
\end{equation}
where $\bar{V}\in\mathbb{R}^n$, $\Delta \theta = \theta - \bar{V}$ and the approximation level is regulated by the stopping condition in~\eqref{eq:simplified_stopping_condition}. Notice that, given a $\pi\in\Pi$,~\eqref{eq: policy_eval} is equivalent to $(I - \gamma P^{\pi})\theta = g^{\pi}$.

%
In the choice of the solver for a linear system of equations, the structural properties of the coefficient matrix play a fundamental role. In particular, for any $\pi\in\Pi$, the coefficient matrix in~\eqref{eq: policy_eval} has eigenvalues contained in the circle with center $(1,0)$ and radius $\gamma$ in the complex plane~\cite[Lemma 1]{gargiani_2023}. Consequently, the coefficient matrix is always non-singular, while we can not rely on symmetry unless the underlying MDP enjoys a specific structure. Its symmetric component is 
\begin{equation}\label{eq: symmetric_component}
H^{\pi, \,\gamma} = I - \frac{\gamma}{2}\left( P^{\pi} + {P^{\pi}}^{\top} \right) = I - \gamma P^{\pi}_s\,.
\end{equation}  

An approximate solution of~\eqref{eq: policy_eval} can therefore  be provided by any linear and non-linear iterative method for linear systems with a non-singular coefficient matrix~\cite{saad, iterative_solutions}.
For a general iPI method, the choice of the inner solver should though be tailored to the specific structure of~\eqref{eq: policy_eval} and, as underlined in Proposition~\ref{prop: iPI_stoppingcondition},  it is of crucial importance in order to obtain fast convergence. In this work we consider Richardson's method~\cite[Chapter 3.2.1]{iterative_solutions}, steepest descent~\cite[Chapter 9.2]{iterative_solutions}, the minimal residual method~\cite[Chapter 9.4]{iterative_solutions} and GMRES~\cite{gmres}. For each of these methods we analyze the convergence properties and computational costs when applied to solve~\eqref{eq: policy_eval}.
 


\subsubsection{Richardson's Method}\label{subsec:richardson}
The iteration of Richardson's method for~\eqref{eq: policy_eval} is 
\begin{equation}\label{eq: RICH_iteration}
\Delta {\theta}_{i+1} = \left( I - \frac{1}{\nu}\left( I - \gamma P^{\pi} \right) \right) \Delta \theta_i - \frac{1}{\nu} r^{\pi}(\bar{V})\,,
\end{equation}
with $\nu >0$ and $\Delta {\theta}_i = {\theta}_i - \bar{V}$ for any $i\geq 0$ and $\bar{V}
\in\mathbb{R}^n$. For compactness, we use $M_{\text{Rich},\,\nu} = I - \frac{1}{\nu}\left( I - \gamma P^{\pi} \right)$ to denote its iteration matrix.

\begin{proposition}\label{prop: VI_Rich}
Consider a general infinite-horizon discounted MDP with finite spaces and the linear system for the evaluation of the cost associated to a policy $\pi\in\Pi$. VI for policy evaluation is the instance of Richardson's method obtained by setting $\nu =1$.
\end{proposition}
\begin{proof}
Consider an arbitrary ${\theta}_i \in \mathbb{R}^n$ and we denote with ${\theta}_{i+1}^{\text{VI}}$ and $\Delta {\theta}_{i+1}^{\text{Rich}, \, \nu}$ the VI for policy evaluation and Richardson's updates, respectively.
Recall that the iteration of VI for policy evaluation starting from ${\theta}_i$ is
\begin{equation}
{\theta}_{i+1}^{\text{VI}} = T_{\pi} {\theta}_i\,.
\end{equation}
We now consider the Richardson's iteration starting from $\Delta {\theta}_i = {\theta}_i - \bar{V}$ with $\nu =1$ 
\begin{equation}\label{eq: proof_virich}
\begin{aligned}
\Delta {\theta}_{i+1}^{\text{Rich},\,1} &= \gamma P^{\pi} \Delta {\theta}_i - r^{\pi}(\bar{V})\\
&= g^{\pi} + \gamma P^{\pi} {\theta}_i - \bar{V}\\
&= T_{\pi}{\theta}_i - \bar{V}\,. 
\end{aligned}
\end{equation}
By simplifying the $\bar{V}$-terms on both sides of~\eqref{eq: proof_virich} we obtain that ${\theta}_{i+1}^{\text{Rich},\,1} = {\theta}_{i+1}^{\text{VI}}$. 
\end{proof}
Richardson's method can be therefore interpreted as a generalization of the value iteration method for policy evaluation. In particular, the $\nu$-parameter plays a fundamental role in determining its convergence properties.
We now study the values of $\nu$ for which Richardson's iterations enjoy monotone convergence in the infinity-norm (see Section 2.2.5 in~\cite{iterative_solutions} for a definition of monotone convergence with respect to a norm).
\begin{proposition}[monotone convergence in the infinity-norm]\label{prop: Rich_monotone_conv}
Consider a general infinite-horizon discounted MDP with finite spaces and the linear system for the evaluation of the cost associated to a policy $\pi\in\Pi$. Let $\left\{ {\theta}_{i}\right\}$ be the sequence of iterates generated by~\eqref{eq: RICH_iteration} starting from $V_0 \in \mathbb{R}^n$. If $\nu >  \frac{1+\gamma}{2}$, then for any $i\geq 0$
\begin{equation}
\Vert {\theta}_{i+1} - V^{\pi} \Vert_{\infty} \leq\, h(\nu, \,\gamma)\, \Vert {\theta}_{i} - V^{\pi} \Vert_{\infty}\,,
\end{equation}
where  $h(\nu, \gamma) = \frac{\vert \nu - 1 \vert}{\nu} + \frac{\gamma}{\nu} <1$.
\end{proposition}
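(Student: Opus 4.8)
The plan is to reduce the claim to a bound on the induced infinity-norm of the Richardson iteration matrix $M_{\text{Rich},\,\nu} = I - \frac{1}{\nu}(I - \gamma P^{\pi})$. First I would observe that $V^{\pi}$ is the fixed point of the affine recursion~\eqref{eq: RICH_iteration}: writing everything in terms of $\Delta\theta_i = \theta_i - \bar{V}$ and using $r^{\pi}(\bar{V}) = (I - \gamma P^{\pi})\bar{V} - g^{\pi}$, one checks that the stationary point $\Delta\theta^*$ satisfies $(I-\gamma P^{\pi})(\Delta\theta^* + \bar{V}) = g^{\pi}$, hence $\theta^* = \Delta\theta^* + \bar{V} = V^{\pi}$. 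Subtracting this fixed-point identity from~\eqref{eq: RICH_iteration} cancels the constant term $-\frac{1}{\nu}r^{\pi}(\bar{V})$ and yields the clean error recursion
\begin{equation*}
\theta_{i+1} - V^{\pi} = M_{\text{Rich},\,\nu}\,(\theta_i - V^{\pi})\,,
\end{equation*}
so that $\Vert \theta_{i+1} - V^{\pi}\Vert_{\infty} \leq \Vert M_{\text{Rich},\,\nu}\Vert_{\infty}\,\Vert \theta_i - V^{\pi}\Vert_{\infty}$ and it remains only to show $\Vert M_{\text{Rich},\,\nu}\Vert_{\infty} \leq h(\nu,\gamma)$.

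Next I would evaluate this norm as the maximal absolute row sum. Expanding $M_{\text{Rich},\,\nu} = \frac{\nu-1}{\nu} I + \frac{\gamma}{\nu} P^{\pi}$, row $i$ has off-diagonal entries $\frac{\gamma}{\nu}p^{\pi}_{ij}$, which are nonnegative since $\gamma,\nu \geq 0$ and $P^{\pi}$ is nonnegative, and diagonal entry $\frac{\nu-1}{\nu} + \frac{\gamma}{\nu}p^{\pi}_{ii}$. Applying the triangle inequality to the diagonal term and then using row-stochasticity of $P^{\pi}$, the absolute row sum is bounded by
\begin{equation*}
\frac{\vert \nu-1\vert}{\nu} + \frac{\gamma}{\nu}p^{\pi}_{ii} + \sum_{j\neq i}\frac{\gamma}{\nu}p^{\pi}_{ij} = \frac{\vert \nu-1\vert}{\nu} + \frac{\gamma}{\nu}\sum_{j}p^{\pi}_{ij} = \frac{\vert \nu-1\vert}{\nu} + \frac{\gamma}{\nu} = h(\nu,\gamma)\,,
\end{equation*}
uniformly over $i$, giving $\Vert M_{\text{Rich},\,\nu}\Vert_{\infty} \leq h(\nu,\gamma)$.

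Finally I would verify $h(\nu,\gamma) < 1$ under the hypothesis $\nu > \frac{1+\gamma}{2}$ by splitting on the sign of $\nu - 1$. For $\nu \geq 1$ one has $h(\nu,\gamma) = \frac{\nu - 1 + \gamma}{\nu} = 1 - \frac{1-\gamma}{\nu} < 1$ because $\gamma < 1$, so the bound holds for every $\nu \geq 1$ automatically. For $\frac{1+\gamma}{2} < \nu < 1$ one has $h(\nu,\gamma) = \frac{1 - \nu + \gamma}{\nu}$, and the inequality $h < 1$ is equivalent to $1 + \gamma < 2\nu$, i.e.\ precisely to $\nu > \frac{1+\gamma}{2}$; note $\frac{1+\gamma}{2} < 1$, so the threshold indeed falls inside this regime. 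The only subtle point is the sign bookkeeping around $\nu = 1$, where the diagonal entry changes sign — the triangle-inequality step in the norm estimate is exactly what absorbs this, so no genuine obstacle arises and the argument is otherwise elementary.
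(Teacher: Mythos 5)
Your proposal is correct and follows essentially the same route as the paper: reduce to the bound $\Vert M_{\text{Rich},\,\nu}\Vert_{\infty}\leq h(\nu,\gamma)$ via the triangle inequality and row-stochasticity of $P^{\pi}$, then check that $h(\nu,\gamma)<1$ exactly when $\nu>\frac{1+\gamma}{2}$. The only difference is that you spell out explicitly the fixed-point identification $\theta^*=V^{\pi}$ and the resulting error recursion, which the paper delegates to a cited theorem on linear iterative methods.
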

\begin{proof}
A sufficient condition for monotone convergence in the infinity-norm is 
$
\Vert M_{\text{Rich},\,\nu} \Vert_{\infty} < 1\,
$~\cite[Th. 2.19]{iterative_solutions}.
We start by computing an upper-bound on the infinity-norm of the iteration matrix 
\begin{equation}\label{eq: upper-bound_CN}
\Vert M_{\text{Rich},\,\nu} \Vert_{\infty} \leq \underbrace{\frac{\vert \nu - 1 \vert}{\nu} + \frac{\gamma}{\nu}}_{:= h(\nu,\,\gamma)}\,,
\end{equation}
where the inequality follows from the application of the triangle-inequality. 
We now study the values of $\nu$ for which $h(\nu,\,\gamma)<1$. 
It is straightforward to verify that, since $\gamma\in(0,1)$, $h(\nu,\,\gamma) < 1$ if and only if $\nu > \frac{1+\gamma}{2}$. 
\end{proof}
The results of Proposition~\ref{prop: Rich_monotone_conv} allow one to have an upper-bound on the infinity-norm of the error for some fixed value of $m$. From this analysis, it is though not possible to conclude that there exists values of $\nu$ which result into a faster monotone convergence than value iteration for policy evaluation ($\nu=1$). It is also straightforward to see that 
\begin{equation}
1 = \arg\min_{\nu > \frac{1+\gamma}{2}} h(\nu,\, \gamma)\,,
\end{equation}
and $h(1,\,\gamma) = \gamma$. A necessary and sufficient condition on convergence of Richardson's method for policy evaluation can be formulated by studying the values of $\nu$ for which $\rho\left(M_{\text{Rich},\, \nu}\right)<1$~\cite[Th. 2.16]{iterative_solutions}. In the literature, the spectral radius of the iteration matrix of a linear iteration method is called \textit{convergence rate} since it determines the asymptotic convergence rate of the method. Differently from the contraction number~\cite[Section 2.2.5]{iterative_solutions}, the convergence rate only allows for asymptotic statements concerning the convergence of the error. 
In particular, $\rho(M_{\text{Rich},\,\nu})$ is a tight measure of the asymptotic convergence rate of Richardson's method. 
In the following theorem, we characterize the class of MDPs for which it is always possible to select $\nu$ in order to achieve asymptotic acceleration with respected to value iteration for policy evaluation, \textit{i.e.} $\rho(M_{\text{Rich},\,\nu}) < \gamma$, and the interval of $\nu$-values which leads to asymptotic acceleration. These results indicate that $\rho(M_{\text{Rich}\,,1}) = \gamma$ which suggests that for $\nu=1$ the upper-bound on the contraction number in~\eqref{eq: upper-bound_CN} is tight.
\begin{theorem}[accelerated convergence rate]\label{th: Rich_acceleration}
Consider a general infinite-horizon discounted MDP with finite spaces and the linear system for the evaluation of the cost associated to a policy $\pi\in\Pi$. If the MDP is regular, there always exists a non-empty interval $\left( \underline{\nu},\, 1 \right)$, with
\begin{equation}\label{eq: lower_bound_Rich_interval}
\underline{\nu} := \max_{\lambda\in\Lambda(P^{\pi})} \frac{\left(1 - \gamma \text{Re}\left\{ \lambda \right\} \right) - \gamma \sqrt{(\gamma^2 - 1)\text{Im}^2\left\{ \lambda \right\} + \left( 1 - \gamma \text{Re}\left\{ \lambda \right\} \right)^2  }}{1 - \gamma^2}\,,
\end{equation}
such that $\rho\left( M_{\text{Rich},\,\nu} \right) < \gamma$ for all $\nu \in (\underline{\nu},\,1)$.
\end{theorem}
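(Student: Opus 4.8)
The plan is to reduce the spectral-radius condition to a family of per-eigenvalue inequalities and then treat each one as a quadratic in $\nu$. Since $M_{\text{Rich},\nu} = I - \frac{1}{\nu}(I - \gamma P^{\pi})$ is an affine function of $P^{\pi}$, its eigenvalues are exactly $\mu(\lambda) = 1 - \frac{1}{\nu}(1 - \gamma\lambda)$ as $\lambda$ ranges over $\Lambda(P^{\pi})$, so that $\rho(M_{\text{Rich},\nu}) < \gamma$ holds if and only if $|\mu(\lambda)| < \gamma$ for every $\lambda \in \Lambda(P^{\pi})$. Writing $\lambda = \text{Re}\{\lambda\} + i\,\text{Im}\{\lambda\}$, splitting $\mu(\lambda)$ into real and imaginary parts, and clearing the positive factor $\nu^2$, I would rewrite $|\mu(\lambda)|^2 < \gamma^2$ as the quadratic inequality
\begin{equation*}
(1-\gamma^2)\nu^2 - 2b\,\nu + b^2 + \gamma^2\,\text{Im}^2\{\lambda\} < 0, \qquad b := 1 - \gamma\,\text{Re}\{\lambda\},
\end{equation*}
whose leading coefficient $1-\gamma^2$ is positive because $\gamma \in (0,1)$.

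Next I would establish that this quadratic always has a non-empty solution interval. Its discriminant is a positive multiple of $b^2 + (\gamma^2-1)\text{Im}^2\{\lambda\}$, and I would show this is non-negative by using $\text{Im}^2\{\lambda\} \leq 1 - \text{Re}^2\{\lambda\}$ (valid since $|\lambda|\leq 1$ for the stochastic matrix $P^{\pi}$) together with the identity $b^2 - (1-\gamma^2)(1 - \text{Re}^2\{\lambda\}) = (\text{Re}\{\lambda\} - \gamma)^2 \geq 0$. Consequently the solution set is the open interval $(\nu_-(\lambda), \nu_+(\lambda))$ whose endpoints are precisely the expression in~\eqref{eq: lower_bound_Rich_interval}, taken with the minus and plus sign respectively.

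The heart of the argument is then to position the point $\nu = 1$ relative to all these intervals. I would prove two facts. First, $\nu_+(\lambda) \geq 1$ for every eigenvalue: after substituting $b$ and squaring the appropriate side, this reduces exactly to $|\lambda|^2 \leq 1$, which holds for all eigenvalues of $P^{\pi}$. Second, $\nu_-(\lambda) < 1$ for every eigenvalue, and this is where regularity is indispensable. By Proposition~\ref{prop: rhoA_greater1}, primitivity of $P^{\pi}$ forces $\lambda_1 = 1$ to be the unique eigenvalue of modulus $1$, while every other eigenvalue satisfies $|\lambda| < 1$. For $\lambda = 1$ a direct computation gives $\nu_-(1) = \frac{1-\gamma}{1+\gamma} < 1$ and $\nu_+(1) = 1$ (which is exactly why the upper endpoint of the interval is $1$, the dominant eigenvalue being the binding constraint there); for the remaining eigenvalues the strict inequality $|\lambda| < 1$ upgrades the identity above to a strict one, yielding $\nu_-(\lambda) < 1$. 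Taking the maximum over the finitely many eigenvalues gives $\underline{\nu} = \max_{\lambda} \nu_-(\lambda) < 1$, so $(\underline{\nu}, 1)$ is non-empty, and for any $\nu$ in it the chain $\nu_-(\lambda) \leq \underline{\nu} < \nu < 1 \leq \nu_+(\lambda)$ holds simultaneously for all $\lambda$, whence $|\mu(\lambda)| < \gamma$ for every eigenvalue and therefore $\rho(M_{\text{Rich},\nu}) < \gamma$.

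The main obstacle I anticipate is the careful bookkeeping of the two sign cases in the squaring steps for $\nu_\pm$, and in particular pinning down where regularity is truly needed: without primitivity a non-principal eigenvalue on the unit circle (for instance $\lambda = -1$, as in~\eqref{eq:example_A}) produces $\nu_-(\lambda) = 1$, which would collapse $(\underline{\nu},1)$ to the empty set. Isolating $\lambda = 1$ as the \emph{unique} unit-modulus eigenvalue is precisely what pushes $\underline{\nu}$ strictly below $1$ and delivers the claimed asymptotic acceleration over value iteration for policy evaluation.
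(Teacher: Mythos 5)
Your proposal is correct and follows essentially the same route as the paper's proof: you reduce $\rho(M_{\text{Rich},\nu})<\gamma$ to the same per-eigenvalue quadratic inequality in $\nu$, establish non-negativity of the same discriminant via the unit-circle bound $\text{Im}^2\{\lambda\}\leq 1-\text{Re}^2\{\lambda\}$, intersect the resulting intervals, and use primitivity to push $\underline{\nu}$ strictly below $\bar{\nu}=1$. The only cosmetic differences are that you verify $\nu_+(\lambda)\geq 1$ directly by reducing it to $\vert\lambda\vert^2\leq 1$ where the paper argues by contradiction, and you make explicit the values $\nu_-(1)=(1-\gamma)/(1+\gamma)$ and the $\lambda=-1$ degeneracy.
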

\begin{proof}
We start by characterizing $\rho(M_{\text{Rich},\,\nu})$ as follow
\begin{equation}
\begin{aligned}
\rho(M_{\text{Rich},\,\nu}) &= \rho\left( I - \frac{1}{\nu}\left( I - \gamma P^{\pi} \right) \right)\\
&= \max_{\lambda \in \Lambda(P^{\pi})} \Big\vert 1 - \frac{1}{\nu}\left( 1 - \gamma \lambda \right) \Big\vert\,.
\end{aligned}
\end{equation}
Since $\rho(P^{\pi}) = 1$, the choice $\nu=1$ leads to $\rho(M_{\text{Rich}\,,1}) = 1$.
In order to obtain an accelerated rate, there has to exist $\underline{\nu},\,\bar{\nu} \in \mathbb{R}$ with $\underline{\nu} < \bar{\nu}$ such that  $\rho(M_{\text{Rich},\,\nu}) < \gamma$ for all $\nu \in (\underline{\nu},\,\bar{\nu})$. We therefore study the values of $\nu$ for which
\begin{equation}\label{eq: condition_alpha}
\max_{\lambda \in \Lambda(P^{\pi})} \Big\vert 1 - \frac{1}{\nu}\left( 1 - \gamma \lambda \right) \Big\vert < \gamma\,.
\end{equation}
\eqref{eq: condition_alpha} is equivalent to 
\begin{equation}
\Big\vert 1 - \frac{1}{\nu}\left( 1 - \gamma \lambda \right) \Big\vert < \gamma \quad \forall\, \lambda \in \Lambda(P^{\pi})\,, 
\end{equation}
which can be equivalently reformulated as follow
\begin{equation}\label{eq: condition2_alpha}
\left( \frac{\nu-1}{\nu} + \frac{\gamma}{\nu}\text{Re}\left\{ \lambda\right\} \right)^2 + \left( \frac{\gamma}{\nu} \text{Im}\left\{ \lambda\right\} \right)^2 < \gamma^2 \quad \forall\,\lambda \in \Lambda(P^{\pi})\,.
\end{equation}
After some basic algebraic manipulations,~\eqref{eq: condition2_alpha} becomes 
\begin{equation}\label{eq: quadratic_ineqs}
\nu^2 (1-\gamma^2) + 2\nu (-1 + \gamma \text{Re}\left\{ \lambda\right\}) + 1 + \gamma^2 \vert \lambda \vert^2  - 2\gamma \text{Re}\left\{ \lambda\right\} < 0 \quad \forall\,\lambda \in \Lambda(P^{\pi})\,.
\end{equation}
To solve the quadratic inequalities in~\eqref{eq: quadratic_ineqs}, we first compute their discriminant as a function of $\lambda\in \Lambda(P^{\pi})$
\begin{equation}\label{eq: discriminant}
\Delta(\lambda) = 4\gamma^2 \left((\gamma^2-1)\text{Im}^2\left\{\lambda \right\} +  \left(  1 - \gamma \text{Re}\left\{ \lambda \right\}\right)^2 \right)\,.
\end{equation}
From~\eqref{eq: discriminant} with some simple algebraic manipulations and since $\gamma\in(0,1)$, we can conclude that 
\begin{equation}\label{eq: discriminant_positive}
\Delta(\lambda) \geq 0 \quad \forall\,\lambda \in \Lambda(P^{\pi}) \iff (1-\gamma^2)\text{Im}^2\left\{\lambda \right\}\leq {(1-\gamma\text{Re}\left\{ \lambda \right\})^2} \quad \forall\,\lambda \in \Lambda(P^{\pi})\,.
\end{equation}
We know that, since $P^{\pi}$ is row-stochastic, its eigenvalues lie in the unit circle in the complex plane, \textit{i.e.},
\begin{equation}\label{eq: eig_incircle}
\text{Im}^2\left\{ \lambda \right\} + \text{Re}^2\left\{ \lambda \right\} \leq 1 \quad \forall\,\lambda \in \Lambda(P^{\pi})\,.
\end{equation}
By multiplying left and right hand-sides in~\eqref{eq: eig_incircle} by $(1-\gamma^2)$ and with some simple algebraic manipulations we obtain that the following condition holds for all $\lambda\in\Lambda(P^{\pi})$
\begin{equation}\label{eq: condition_gamma}
(1-\gamma^2) \text{Im}^2\left\{ \lambda \right\} \leq (1-\gamma^2)\left(1 -   \text{Re}^2\left\{ \lambda \right\} \right)\,.
\end{equation}
Since $(1-\gamma^2)\left(1 -   \text{Re}^2\left\{ \lambda \right\} \right) \leq \left( 1 - \gamma \text{Re}\left\{ \lambda \right\} \right)^2$, we can conclude that~\eqref{eq: condition_gamma} is more stringent that the right condition in~\eqref{eq: discriminant_positive} . Consequently, the right condition in~\eqref{eq: discriminant_positive} is always verified and therefore $\Delta(\lambda) \geq 0$ for all $\lambda \in P^{\pi}$.
We now solve the inequalities in~\eqref{eq: quadratic_ineqs} and for each $\lambda\in P^{\pi}$ we obtain an interval $\left( \nu_1(\lambda), \,\nu_2(\lambda)  \right)$ where
\begin{equation}
\nu_{1,2}(\lambda) = \frac{(1-\gamma \text{Re}\left\{ \lambda \right\}) \pm \gamma \sqrt{(\gamma^2 - 1)\text{Im}^2\left\{ \lambda \right\} + (1-\gamma \text{Re}\left\{ \lambda\right\})^2}}{(1-\gamma^2)}\,.
\end{equation}
The solution of~\eqref{eq: quadratic_ineqs} is therefore obtained by intersecting these intervals as follow
\begin{equation}
\nu \in \bigcap_{\lambda\in\Lambda(P^{\pi})} \left( \nu_1(\lambda),\,\nu_2(\lambda)\right)  = \left( \underline{\nu},\,\bar{\nu} \right)\,,
\end{equation}
where
$\underline{\nu} = \max_{\lambda\in\Lambda(P^{\pi})} \nu_1(\lambda)$ and $\bar{\nu} = \min_{\lambda\in\Lambda(P^{\pi})} \nu_2(\lambda)$.

We now show that $\bar{\nu} = 1$. We first show that $\bar{\nu} \geq 1$ with a \textit{reduction ad absurdum}. We assume that there exists $\hat{\lambda}\in\Lambda(P^{\pi})$ such that $\nu_2(\hat{\lambda})<1$. This would imply that the following inequality holds
\begin{equation}\label{eq: condition_alphabar}
\gamma \sqrt{(\gamma^2 - 1)\text{Im}^2\left\{ \hat{\lambda} \right\} + (1-\gamma \text{Re}\left\{ \hat{\lambda}\right\})^2} < \gamma\left( -\gamma + \text{Re}\left\{ \hat{\lambda} \right\} \right)\,.
\end{equation}
We now distinguish two scenarios: $\text{Re}\left\{\hat{\lambda} \right\} \leq \gamma$ and $\text{Re}\left\{\hat{\lambda} \right\} > \gamma$. In the first case, since the left hand-side in~\eqref{eq: condition_alphabar} is always non-negative, we can directly conclude that the original assumption is absurd and therefore $\bar{\nu}\geq 1$. For the second scenario, we can square both sides in~\eqref{eq: condition_alphabar} to obtain the following equivalent condition
\begin{equation}\label{eq: condition_alphabar2}
(\gamma^2 - 1) \text{Im}^2\left\{ \hat{\lambda} \right\} < (1-\gamma^2)\left( \text{Re}^2\left\{ \hat{\lambda}\right\} - 1\right)\,.
\end{equation} 
With some algebraic manipulations~\eqref{eq: condition_alphabar2} can be reformulated as follow
\begin{equation}
(1-\gamma^2) (\vert \hat{\lambda} \vert^2 - 1) > 0\,,
\end{equation} 
which is verified if and only if $\vert \hat{\lambda} \vert > 1$. Since all eigenvalue of $P^{\pi}$ have modulus not greater than 1, also for this second scenario we can conclude that the original assumption is absurd and therefore $\bar{\nu} \geq 1$. Finally, since row-stochastic matrices have always an eigenvalue at $(1, 0)$  and $\nu_2(1) = 1$, we can conclude that $\bar{\nu}=1$. With a similar argument, we can prove that $\underline{\nu} \leq 1$ and that the equality is achieved if and only if $\vert\lambda\vert^2 =1$ and $\text{Re}\left\{ \lambda \right\} \leq \gamma$ with $\lambda\in \Lambda(P^{\pi})$. While for a general MDP these conditions can be verified since $P^{\pi}$ could have multiple eigenvalues with modulus 1, for the case of regular MDPs we can conclude that $\underline{\nu}<1$ since 1 is the only eigenvalue on the unit circle and all the other eigenvalues lie strictly inside. We can therefore conclude that for a general MDP it may not be possible to select $\nu$ to obtain an accelerated rate with respect to VI since there could a policy $\pi\in\Pi$ for which $\underline{\nu}= \bar{\nu} = 1$, while for regular MDPs $(\underline{\nu},\,\bar{\nu})$ is never empty since $\underline{\nu} < \bar{\nu}=1$ and $\rho(M_{\text{Rich},\,\nu})<\gamma$ for all $\nu\in (\underline{\nu},\,\bar{\nu})$.   
\end{proof}
The results of Theorem~\ref{th: Rich_acceleration} show that value iteration for policy evaluation is not always optimal in the sense of contraction rate. For the class of regular MDPs for any $\pi\in\Pi$ the optimal $\nu$ in the sense of contraction rate is strictly less than 1. In particular, given the policy evaluation task associated to any $\pi\in\Pi$ of a regular MDP, it is always possible to achieve a faster convergence rate than $\gamma$ by setting $\nu$ in an appropriate interval which depends on the spectrum of the transition probability matrix and the discount factor. The benchmarks in Figure~\ref{fig:Richardson_policyevaluation} corroborate these results. As displayed in the figure for a regular MDP with discount factor $\gamma=0.9$, by setting $\nu=0.7$ we obtain a considerable improvement in terms of convergence rate with respect to value iteration for policy evaluation, while for $\nu>1$ we observe monotone convergence but with a slower rate. 

Richardson's iteration can also be used in combination with a non-singular preconditioning matrix $D\in\mathbb{R}^{n\times n}$. This leads to the following update
\begin{equation}\label{eq: preconditioned_Rich}
\Delta {\theta}_{i+1} = \Delta {\theta}_i - \frac{1}{\nu} D^{-1}\left( \left(I - \gamma P^{\pi} \right)\Delta {\theta}_i - g^{\pi} \right) \,.
\end{equation}
Notice that, if $\nu=1$ and $D = \text{diag}\left( I - \gamma P^{\pi} \right)$, then we recover the Jacobi iteration~\cite[Section 3.2.2]{iterative_solutions}. Finally, starting from~\eqref{eq: preconditioned_Rich} and with $D = \text{diag}\left( I - \gamma P^{\pi} \right)$, if the states are not processed all at once, but in a sequential manner making use of the interim results, then we recover the SOR iteration~\cite[Section 3.2.4]{iterative_solutions}, and, if we set $\alpha=1$, the Gauss-Seidel iteration~\cite[Section 3.2.3]{iterative_solutions}. The use of interim results may lead to a faster convergence in terms of number of iterations, as proved for instance in~\cite{gargiani_minibatch} and also confirmed by the empirical results displayed in Figure~\ref{fig:Richardson_policyevaluation}. From a computational viewpoint, SOR and Gauss-Seidel iterations are not suitable for parallel systems since they are inherently sequential, while Richardson and Jacobi iterations are fully parallelizable. The batch-versions of these latter algorithms constitute a solution to trade-off the two aspects and therefore may result into more efficient methods when a parallel system is deployed~\cite{gargiani_minibatch}.

Unfortunately, the evaluation of~\eqref{eq: lower_bound_Rich_interval} requires spectral data of the transition probability matrix whose computation is generally more expensive than the solution of the original problem. Therefore, practically, in order to enjoy the acceleration in convergence rate for regular MDPs, it is necessary to develop heuristic schemes in order to adjust $\nu$ avoiding the computation of $\Lambda(P^{\pi})$. In the following, we analyze alternative methods which adopt exact line-search strategies in order to select the step-size.      
\begin{figure}
    \centering
    \includegraphics[scale=0.6]{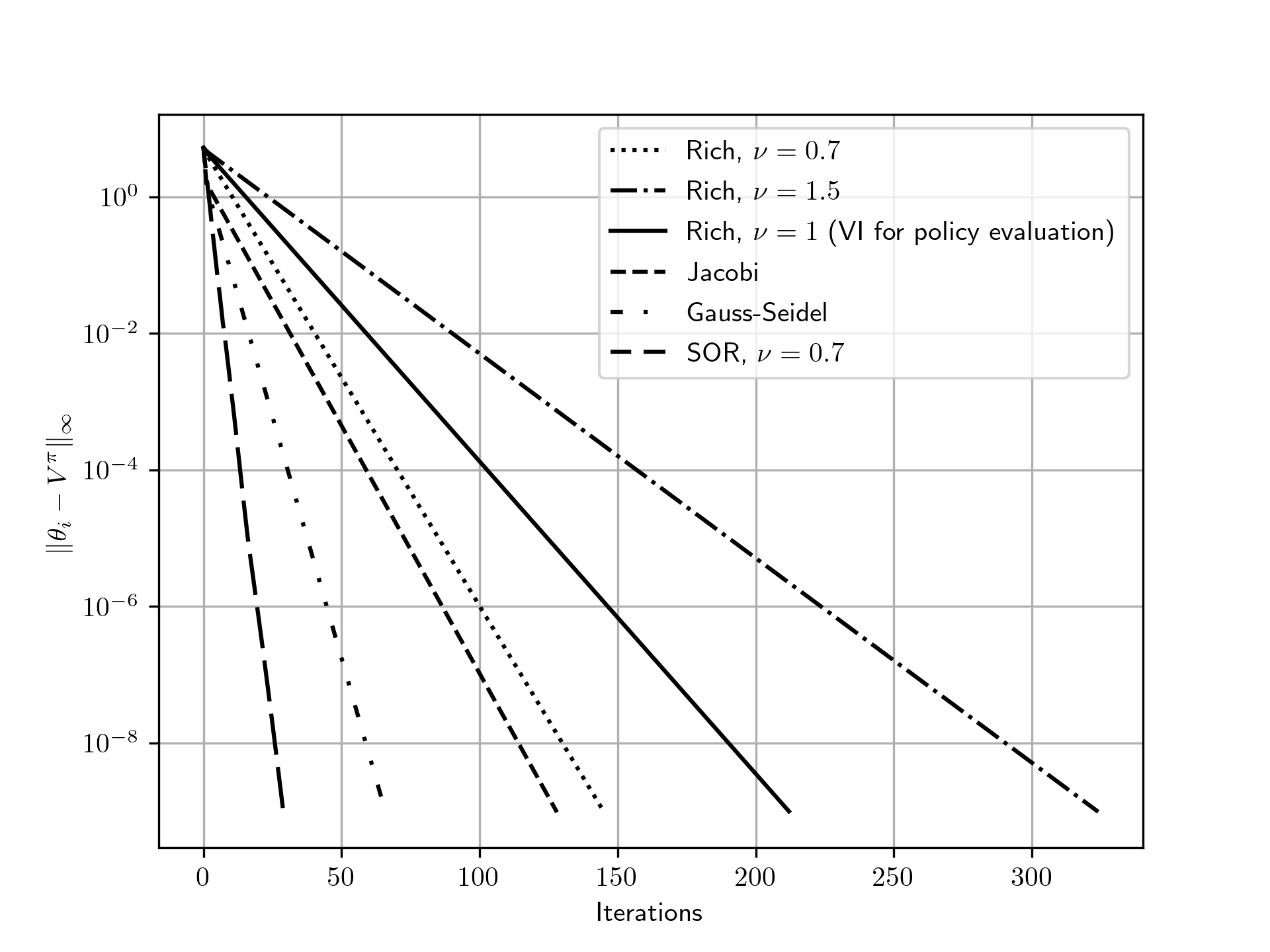}
    \caption{We consider a regular discounted MDP with $\gamma=0.9$, $n=100$ and $m=50$ and we solve the policy evaluation task for a random policy $\pi\in\Pi$ with Richardson's method with different values of $\nu$, Jacobi, Gauss-Seidel and SOR. The figure displays the distance in infinity-norm of the iterates from the solution versus number of iterations.}
    \label{fig:Richardson_policyevaluation}
\end{figure}

\subsubsection{Steepest Descent}\label{subsec:steepest_descent}
Instead of solving directly~\eqref{eq: linear_system_PE}, we can equivalently minimize a quadratic function of the following form
\begin{equation}\label{eq: quadratic_functional}
q^{\pi,\,\gamma}(\Delta \theta; \Xi,\, c) = \frac{1}{2}\langle \Xi\left(\left( I - \gamma P^{\pi}\right) \Delta \theta + r^{\pi}(\bar{V})\right),\,  \left( I - \gamma P^{\pi}\right) \Delta \theta + r^{\pi}(\bar{V})\rangle + c\,,
\end{equation} 
where $\Xi \succ 0$ and $c$ is an arbitrary constant. The minimization of~\eqref{eq: quadratic_functional} yields $V^{\pi}$ since any quadratic function of this form has a unique minimum which coincides with the solution of~\eqref{eq: linear_system_PE}~\cite[Lemma 9.3]{iterative_solutions}.
In order to minimize~\eqref{eq: quadratic_functional}, we can design an iteration of the following form
\begin{equation}\label{eq: general_iter_QP}
\Delta {\theta}_{i+1} = \Delta {\theta}_i + \eta_i\, d_i\,,
\end{equation}
where $d_i \in \mathbb{R}^n$ is a selected search-direction and $\eta_i > 0$ is the optimal step-size obtained with exact line-search. These are the main principles behind the steepest descent method, where $\Xi=I$, $c=0$ and $d_i = -\nabla \,q^{\pi,\,\gamma}(\Delta {\theta}_i; I,\, 0)$. In particular, with the choice of this quadratic function and descent-direction, iteration~\eqref{eq: general_iter_QP} becomes
\begin{equation}\label{eq: steepest_descent_iteration}
\Delta {\theta}_{i+1} = \Delta {\theta}_i - \eta_i \left(I - \gamma P^{\pi} \right)^\top \left( \left(I - \gamma P^{\pi} \right) \Delta {\theta}_i + r^{\pi}\left( \bar{V} \right) \right)\,,
\end{equation}
where $\eta_i = \frac{\langle -r^{\pi}(\bar{V}) - \left( I - \gamma P^{\pi} \right)\Delta {\theta}_i ,\, - \left( I - \gamma P^{\pi} \right) \nabla q^{\pi,\,\gamma}(\Delta {\theta}_i; I,\,0)\rangle}{\langle  \left( I - \gamma P^{\pi} \right) \nabla q^{\pi,\,\gamma}(\Delta {\theta}_i; I,\,0),\,\, \left( I - \gamma P^{\pi} \right) \nabla q^{\pi,\,\gamma}(\Delta {\theta}_i; I,\,0)  \rangle}$.
\bigskip

The following proposition characterizes the convergence properties of iteration~\eqref{eq: steepest_descent_iteration}.
\vspace{-0.5cm}
\begin{proposition}
Consider a general infinite-horizon discounted MDP with finite spaces and the linear system for the evaluation of the cost associated to a policy $\pi\in\Pi$. The sequence $\left\{ {\theta}_i \right\}$ generated by steepest descent~\eqref{eq: steepest_descent_iteration}  with $V_0 \in \mathbb{R}^n$ enjoys global Q-linear convergence to $\Delta V^* = V^{\pi} - \bar{V}$ with contraction number $\phi(\gamma, \,P^{\pi}) = \dfrac{\kappa(\left( I - \gamma P^{\pi} \right)^\top \left( I - \gamma P^{\pi} \right)) - 1}{\kappa( \left( I - \gamma P^{\pi} \right)^\top \left( I - \gamma P^{\pi} \right)) + 1}$ in the ${\left( I - \gamma P^{\pi}\right)^\top \left( I - \gamma P^{\pi}\right)}$-norm, \textit{i.e.}, for any $V_0\in \mathbb{R}^n$ and $i\geq 0$
\begin{equation}
\Vert {\theta}_{i+1} - V^{\pi}  \Vert_{\left( I - \gamma P^{\pi} \right)^\top \left( I - \gamma P^{\pi} \right)} \, \leq \, \phi(\gamma,\,P^{\pi})\Vert {\theta}_i - V^{\pi} \Vert_{\left( I - \gamma P^{\pi} \right)^\top \left( I - \gamma P^{\pi} \right)}\,.
\end{equation} 

\end{proposition}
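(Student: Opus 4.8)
The plan is to recognize that steepest descent applied to $q^{\pi,\,\gamma}(\cdot\,; I,\,0)$ is exactly the classical Cauchy steepest descent method for a strictly convex quadratic whose Hessian is the symmetric positive-definite matrix $B := (I-\gamma P^{\pi})^\top(I-\gamma P^{\pi})$, and then to invoke the standard convergence estimate for steepest descent on such a quadratic.

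First I would set $A := I - \gamma P^{\pi}$ and note that $A$ is nonsingular, since its spectrum lies in the disk of center $(1,0)$ and radius $\gamma<1$; hence $B = A^\top A \succ 0$. Expanding the functional~\eqref{eq: quadratic_functional} with $\Xi = I$ and $c=0$ gives
\[
q^{\pi,\,\gamma}(\Delta\theta; I,\,0) = \tfrac{1}{2}\langle B\,\Delta\theta,\,\Delta\theta\rangle + \langle A^\top r^{\pi}(\bar{V}),\,\Delta\theta\rangle + \tfrac{1}{2}\langle r^{\pi}(\bar{V}),\,r^{\pi}(\bar{V})\rangle,
\]
a strictly convex quadratic in $\Delta\theta$ with constant Hessian $B$. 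Its unique stationary point solves the normal equations $B\,\Delta\theta = -A^\top r^{\pi}(\bar{V})$; using $r^{\pi}(\bar{V}) = A\bar{V} - g^{\pi}$ together with the nonsingularity of $A^\top$, this minimizer is $A\,\Delta\theta = g^{\pi} - A\bar{V}$, i.e. $\Delta\theta^\star = A^{-1}g^{\pi} - \bar{V} = V^{\pi} - \bar{V} = \Delta V^*$. This confirms that minimizing $q^{\pi,\,\gamma}$ recovers $V^{\pi}$ and identifies the target of the iteration.

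Next, since the search direction in~\eqref{eq: steepest_descent_iteration} is $d_i = -\nabla q^{\pi,\,\gamma}(\Delta\theta_i; I,\,0)$ and $\eta_i$ is the exact line-search step, the iteration is precisely Cauchy steepest descent for the quadratic above. I would therefore apply the classical convergence theorem for steepest descent on a symmetric positive-definite system~\cite[Chapter 9.2]{iterative_solutions}, which yields Q-linear contraction in the $B$-energy norm with factor $(\kappa(B)-1)/(\kappa(B)+1)$, i.e. exactly $\phi(\gamma,\,P^{\pi})$ with $B = A^\top A$. The translation of norms is immediate: writing $x_i = \Delta\theta_i$ and $x^\star = \Delta V^*$, one has $x_i - x^\star = \theta_i - V^{\pi}$, so the $B$-norm of $x_i - x^\star$ equals $\Vert \theta_i - V^{\pi}\Vert_{(I-\gamma P^{\pi})^\top(I-\gamma P^{\pi})}$, giving the stated inequality; global convergence follows because the estimate holds from any $V_0 \in \mathbb{R}^n$.

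The bulk of the argument is the clean reduction and the bookkeeping that the exact-line-search direction and step coincide with the textbook method; the only genuinely nontrivial analytic ingredient is the Kantorovich inequality underlying the $(\kappa-1)/(\kappa+1)$ rate, which I would take as a black box from the cited reference rather than reprove. The main point to be careful about is that the relevant condition number is $\kappa(A^\top A)$ and \emph{not} $\kappa(A)$, because with $\Xi = I$ steepest descent effectively solves the normal-equation system associated with $A^\top A$; keeping this distinction explicit is precisely what produces the squared-operator $(I-\gamma P^{\pi})^\top(I-\gamma P^{\pi})$ in both the contraction factor and the energy norm.
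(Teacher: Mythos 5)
Your proposal is correct and follows essentially the same route as the paper: both reduce the iteration to classical steepest descent on the normal-equation system with SPD matrix $(I-\gamma P^{\pi})^{\top}(I-\gamma P^{\pi})$, identify the minimizer with $V^{\pi}-\bar{V}$, and invoke the standard Kantorovich-based energy-norm contraction estimate (the paper cites Theorem 5.2 of Saad) to obtain the $(\kappa-1)/(\kappa+1)$ rate. Your explicit expansion of the quadratic and verification that the relevant condition number is $\kappa(A^{\top}A)$ rather than $\kappa(A)$ is just a more detailed rendering of the same argument.
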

\begin{proof}
Each iteration~\eqref{eq: steepest_descent_iteration} minimizes $q^{\pi,  \, \gamma}(\Delta \theta; I,\, 0)$ over all the vectors of the form $\Delta \theta - \eta \nabla q^{\pi,  \, \gamma}(\Delta \theta; I,\, 0)$. This is also equivalent to minimizing
over the same family of vectors the following quadratic function
\begin{equation}
f(x) = \frac{1}{2} \Vert \Delta \theta - \Delta {\theta}^* \Vert^2_{\left(  I - \gamma P^{\pi} \right)^\top \left( I - \gamma P^{\pi}\right)} \,,
\end{equation}
where $\Delta {\theta}^* = V^{\pi} - \bar{V} = -\left( I - \gamma P^{\pi} \right)^{-1} r^{\pi}(\bar{V})$. Since $\left(I - \gamma P^{\pi}  \right)$ is non-singular for any $\pi \in \Pi$, then this implies that $\left(I - \gamma P^{\pi}  \right)^\top \left(I - \gamma P^{\pi}  \right)\succ 0$. We can therefore utilize the results of Theorem 5.2 in~\cite{saad} to conclude the proof, where $A = \left( I - \gamma P^{\pi} \right)^\top \left( I - \gamma P^{\pi} \right)$ and $b = -\left(I - \gamma P^{\pi} \right)^\top r^{\pi}(\bar{V}) $.
 
\end{proof}

\subsubsection{Minimal Residual Method}\label{subsec:minres}
The minimal residual method minimizes the same quadratic function as steepest descent, but $d_i = -r^{\pi}(\bar{V}) - \left( I - \gamma P^{\pi} \right) \Delta {\theta}_i$. This results in the following iteration
\begin{equation}\label{eq:MR_iteration}
\Delta {\theta}_{i+1} = \Delta {\theta}_i + \eta_i \left( -r^{\pi}(\bar{V}) - \left( I - \gamma P^{\pi} \right)\Delta {\theta}_i \right)\,,
\end{equation}
where $\eta_i = \frac{\langle\left( I - \gamma P^{\pi} \right) \left( -r^{\pi}(\bar{V}) - \left( I - \gamma P^{\pi} \right)\Delta {\theta}_i \right),\, -r^{\pi}(\bar{V}) - \left( I - \gamma P^{\pi} \right)\Delta {\theta}_i \rangle}{\langle\left( I - \gamma P^{\pi} \right) \left( -r^{\pi}(\bar{V}) - \left( I - \gamma P^{\pi} \right)\Delta {\theta}_i \right),\,\left( I - \gamma P^{\pi} \right) \left( -r^{\pi}(\bar{V}) - \left( I - \gamma P^{\pi} \right)\Delta {\theta}_i \right)\rangle}$.
Notice that if we select a constant step-size $\eta>0$ in iteration~\eqref{eq:MR_iteration} we retrieve Richardson's iteration where $\nu = \frac{1}{\eta}$ in~\eqref{eq: RICH_iteration}. While in general the selection of the $\nu$-parameter in Richardson's method requires knowledge of the spectrum of the coefficient matrix, the minimal residual method adjusts this value on the fly based on local information using exact line search.    
Convergence results from the literature rely on the assumption that $H^{\pi,\,\gamma} \succ 0$. In particular, under this assumption, the 2-norm of the residual vectors of iteration~\eqref{eq:MR_iteration} converges to zero with contraction number at least $\kappa_{\text{MR}} = \sqrt{1 - \frac{\lambda^2_{\min}(H^{\pi\,,\gamma})}{\sigma^2_{\max}(I - \gamma P^{\pi})}}$~\cite[Th. 5.3]{saad}. As proved by the following lemma, in the considered problem setting the assumption $H^{\pi\,,\gamma}\succ 0$ is not necessarily verified for all values of $\gamma \in (0,1)$. 
\begin{figure}
    \centering
    \includegraphics[scale=0.6]{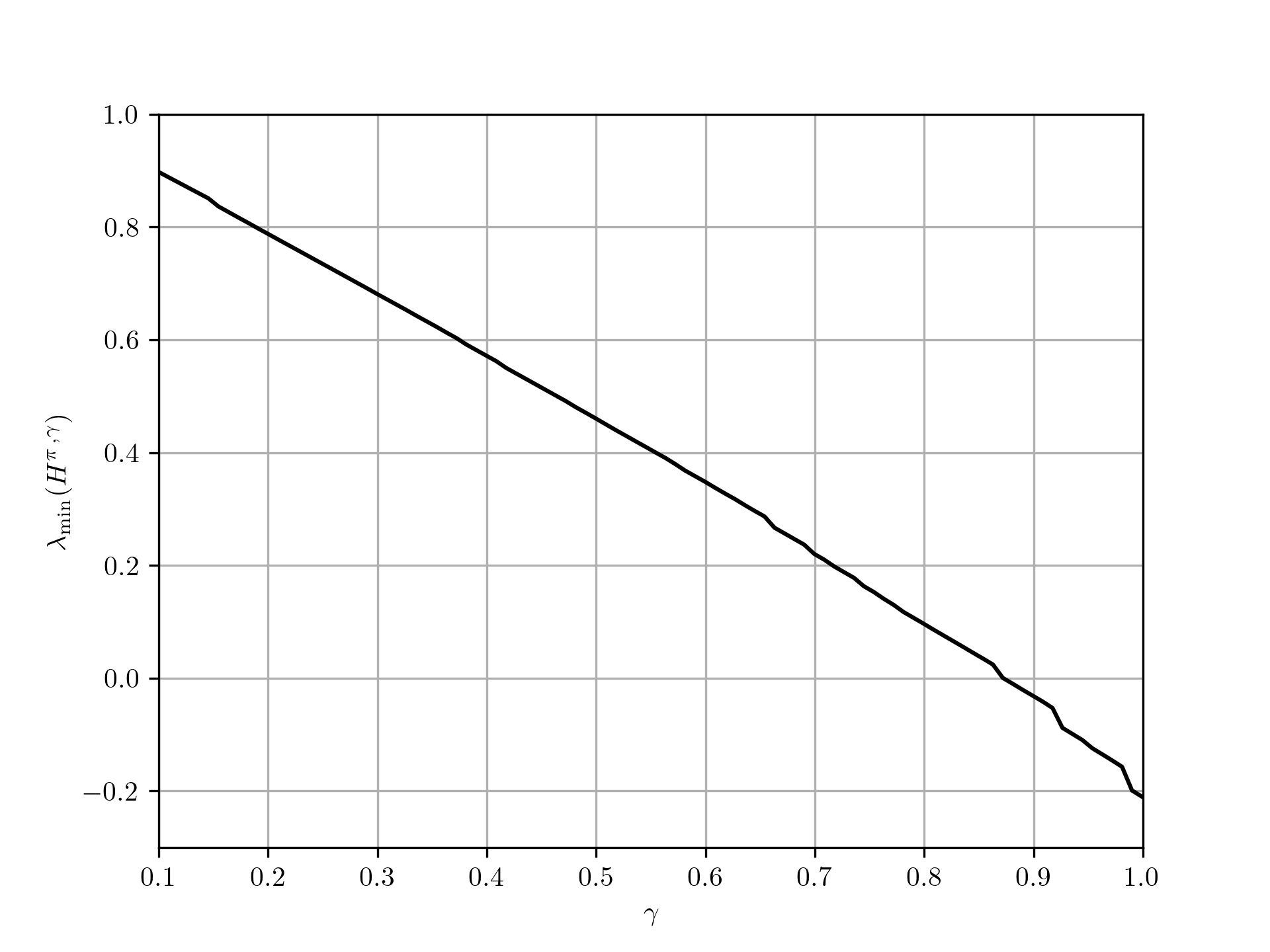}
    \caption{We consider an MDP with $m=500$, $n=40$ and different values of discount factor. We plot the minimum eigenvalue of $H^{\pi,\,\gamma}$ as a function of $\gamma$. In line with the results of Lemma~\ref{lemma: PD_H}, for high-values of the discount factor $\lambda_{\min}(H^{\pi,\,\gamma})<0$. For all tested values of $\gamma$ in the plot, the minimal residual method has achieved convergence to the solution. The convergence of some configurations in terms of 2-norm of the residuals is plotted in Figure~\ref{fig:MinRes_contraction_rate}.}
    \label{fig:MinRes_criteria}
\end{figure}

\begin{lemma}\label{lemma: PD_H}
Consider a general infinite-horizon discounted MDP with finite spaces. For any $\pi\in\Pi$, $1/\lambda_{\max}\left( P_s^{\pi} \right)\in (0,1]$ and $H^{\pi\,,\gamma}$ as defined in~\eqref{eq: symmetric_component} is positive definite if and only if $\gamma < 1/\lambda_{\max}\left( P_s^{\pi} \right)$.
\end{lemma}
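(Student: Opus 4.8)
The plan is to exploit that $H^{\pi,\gamma} = I - \gamma P^{\pi}_s$ is \emph{symmetric} (being the difference of the identity and the symmetric matrix $P^{\pi}_s$), so that its positive definiteness can be read off directly from its spectrum. First I would observe that, since $P^{\pi}_s$ is symmetric, it admits an orthonormal eigenbasis with real eigenvalues, and the eigenvalues of $H^{\pi,\gamma}$ are exactly $\{1 - \gamma\mu : \mu \in \Lambda(P^{\pi}_s)\}$. Because $\gamma > 0$, the map $\mu \mapsto 1 - \gamma\mu$ is strictly decreasing, so $\lambda_{\min}(H^{\pi,\gamma}) = 1 - \gamma\,\lambda_{\max}(P^{\pi}_s)$. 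Hence $H^{\pi,\gamma} \succ 0$ if and only if $1 - \gamma\,\lambda_{\max}(P^{\pi}_s) > 0$, which, once we know $\lambda_{\max}(P^{\pi}_s) > 0$, is equivalent to $\gamma < 1/\lambda_{\max}(P^{\pi}_s)$.

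The remaining and substantive step is to locate $\lambda_{\max}(P^{\pi}_s)$, both to legitimize the division and to establish $1/\lambda_{\max}(P^{\pi}_s) \in (0,1]$. The key observation is that $P^{\pi}$ is row-stochastic, so $P^{\pi}e = e$ with $e$ the all-ones vector. Evaluating the Rayleigh quotient of $P^{\pi}_s$ at $e$ yields
\begin{equation*}
\frac{e^\top P^{\pi}_s\, e}{e^\top e} = \frac{\tfrac{1}{2}\left(e^\top P^{\pi} e + e^\top {P^{\pi}}^{\top} e\right)}{e^\top e} = \frac{\tfrac{1}{2}\left(e^\top e + e^\top e\right)}{e^\top e} = 1,
\end{equation*}
using $e^\top P^{\pi} e = e^\top e$ and $e^\top {P^{\pi}}^{\top} e = (P^{\pi}e)^\top e = e^\top e$. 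By the variational (Rayleigh--Ritz) characterization of the largest eigenvalue of a symmetric matrix, $\lambda_{\max}(P^{\pi}_s) \geq 1$. In particular $\lambda_{\max}(P^{\pi}_s)$ is strictly positive and finite, so $1/\lambda_{\max}(P^{\pi}_s) \in (0,1]$, which is the first claim.

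Combining the two steps completes the proof: $H^{\pi,\gamma} \succ 0 \iff 1 - \gamma\,\lambda_{\max}(P^{\pi}_s) > 0 \iff \gamma < 1/\lambda_{\max}(P^{\pi}_s)$, with the threshold guaranteed to lie in $(0,1]$. I expect the only delicate point to be the Rayleigh-quotient lower bound $\lambda_{\max}(P^{\pi}_s) \geq 1$; everything else is routine symmetric-matrix spectral theory. It is worth stressing that $P^{\pi}_s$ need \emph{not} be stochastic (its column sums are unconstrained), so its largest eigenvalue can strictly exceed $1$, with the endpoint $1/\lambda_{\max}(P^{\pi}_s) = 1$ attained exactly when $P^{\pi}$ is symmetric; this is precisely what makes the stated threshold genuinely sharper than the naive bound $\gamma < 1$.
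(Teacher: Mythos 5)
Your proof is correct, and its core is the same as the paper's: both reduce positive definiteness of the symmetric matrix $H^{\pi,\gamma}=I-\gamma P^{\pi}_s$ to the sign of $\lambda_{\min}(H^{\pi,\gamma})=1-\gamma\,\lambda_{\max}(P^{\pi}_s)$, giving the equivalence with $\gamma<1/\lambda_{\max}(P^{\pi}_s)$. Where you genuinely diverge is the substantive sub-step $\lambda_{\max}(P^{\pi}_s)\geq 1$, which legitimizes the division and yields $1/\lambda_{\max}(P^{\pi}_s)\in(0,1]$. The paper obtains it from the general Bendixson-type inequality $\mathrm{Re}\{\lambda\}\leq\lambda_{\max}(A_s)$ for every eigenvalue $\lambda$ of a real matrix $A$ (citing Theorem 1.20 in Saad), combined with the fact that $1\in\Lambda(P^{\pi})$ for a row-stochastic matrix. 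You instead evaluate the Rayleigh quotient of $P^{\pi}_s$ at the all-ones vector $e$ and invoke Rayleigh--Ritz. Your route is more elementary and self-contained: it needs only $P^{\pi}e=e$ and the variational characterization of $\lambda_{\max}$ for symmetric matrices, with no external spectral localization theorem. The paper's route, by contrast, records the stronger structural fact that $\lambda_{\max}(P^{\pi}_s)$ dominates the real part of \emph{every} eigenvalue of $P^{\pi}$, which is reused conceptually elsewhere in the surrounding discussion. Both arguments are valid and give the same conclusion.

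One side remark in your closing paragraph is incorrect, although it does not affect the lemma: the endpoint $\lambda_{\max}(P^{\pi}_s)=1$ is attained exactly when $P^{\pi}$ is \emph{doubly} stochastic, not when it is symmetric. Equality in your Rayleigh bound forces $e$ to be a maximizing eigenvector, i.e.\ $P^{\pi}_s e=e$, which is equivalent to ${P^{\pi}}^{\top}e=e$. For instance, the cyclic permutation matrix on three states is doubly stochastic and non-symmetric, yet its symmetric part has eigenvalues $\{1,-1/2,-1/2\}$, so $\lambda_{\max}(P^{\pi}_s)=1$.
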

\begin{proof}
Recall that, since for any $\pi\in\Pi$ $P^{\pi}$ is row-stochastic, its eigenvalues lie in the circle with radius 1 and center $(0, 0)$ in the complex plane. Consequently, $\text{Re}\left\{ \lambda \right\} \leq 1$ for all $\lambda\in\Lambda(P^{\pi})$ and $(1,0)\in \Lambda\left( P^{\pi} \right)$. 
Since $H^{\pi\,,\gamma}$ is symmetric by construction, studying its positive definiteness boils down to studying if the following condition hold
\begin{equation}\label{eq:conditions_Pd_iR}
\lambda_{\min}\left(I - \frac{\gamma}{2} \left( P^{\pi} + {P^{\pi}}^\top \right)  \right) > 0 \,.
\end{equation} 
Since
\begin{equation}
\lambda_{\min}\left(I - \frac{\gamma}{2} \left( P^{\pi} + {P^{\pi}}^\top \right)  \right) = 1 - \gamma \lambda_{\max}\left( P_s^{\pi} \right)\,,
\end{equation}
\eqref{eq:conditions_Pd_iR} can also be reformulated as follow
\begin{equation}
 1 - \gamma \lambda_{\max}\left( P_s^{\pi} \right) > 0\,. 
\end{equation}
We can therefore conclude that $H^{\pi\,,\gamma}\succ 0$ if and only if $\gamma < \frac{1}{\lambda_{\max}(P^{\pi}_s)}$. 
Finally, since $\text{Re}\left\{ \lambda \right\} \leq \lambda_{\max}\left( P^{\pi}_s \right)$ for all $\lambda \in \Lambda\left( P^{\pi} \right)$~\cite[Th. 1.20]{saad}, we can conclude that $1/\lambda_{\max}(P^{\pi}_s) \in (0, 1]$.
\end{proof}
The results of Lemma~\ref{lemma: PD_H} suggest that, in general, $H^{\pi,\,\gamma}\nsucc 0$ for large values of the discount factor. Empirical evidence though shows that iteration~\eqref{eq:MR_iteration} enjoys convergence also when $H^{\pi\,,\gamma}\nsucc 0$, indicating that the positive definite assumption can be restrictive. In particular, we conduct a benchmark using iteration~\eqref{eq:MR_iteration} to compute the cost associated with a policy $\pi$ for an MDP with $n=500$, $m=40$ and different values of discount factor. Convergence to $V^{\pi}$ is always achieved, even for high values of $\gamma$ for which, as displayed in Figure~\ref{fig:MinRes_criteria}, $\lambda_{\min}\left( H^{\pi\,,\gamma} \right)<0$.
Finally, even when $H^{\pi\,,\gamma}\succ 0$, the estimate on the contraction rate provided by the literature tends to be very loose and therefore not representative of the performance. An example of this is depicted in Figure~\ref{fig:MinRes_contraction_rate}, where we use $\hat{\kappa}_{\text{MR}}$ to denote the estimated contraction rate. 

\begin{figure}\centering
\includegraphics[scale=0.6]{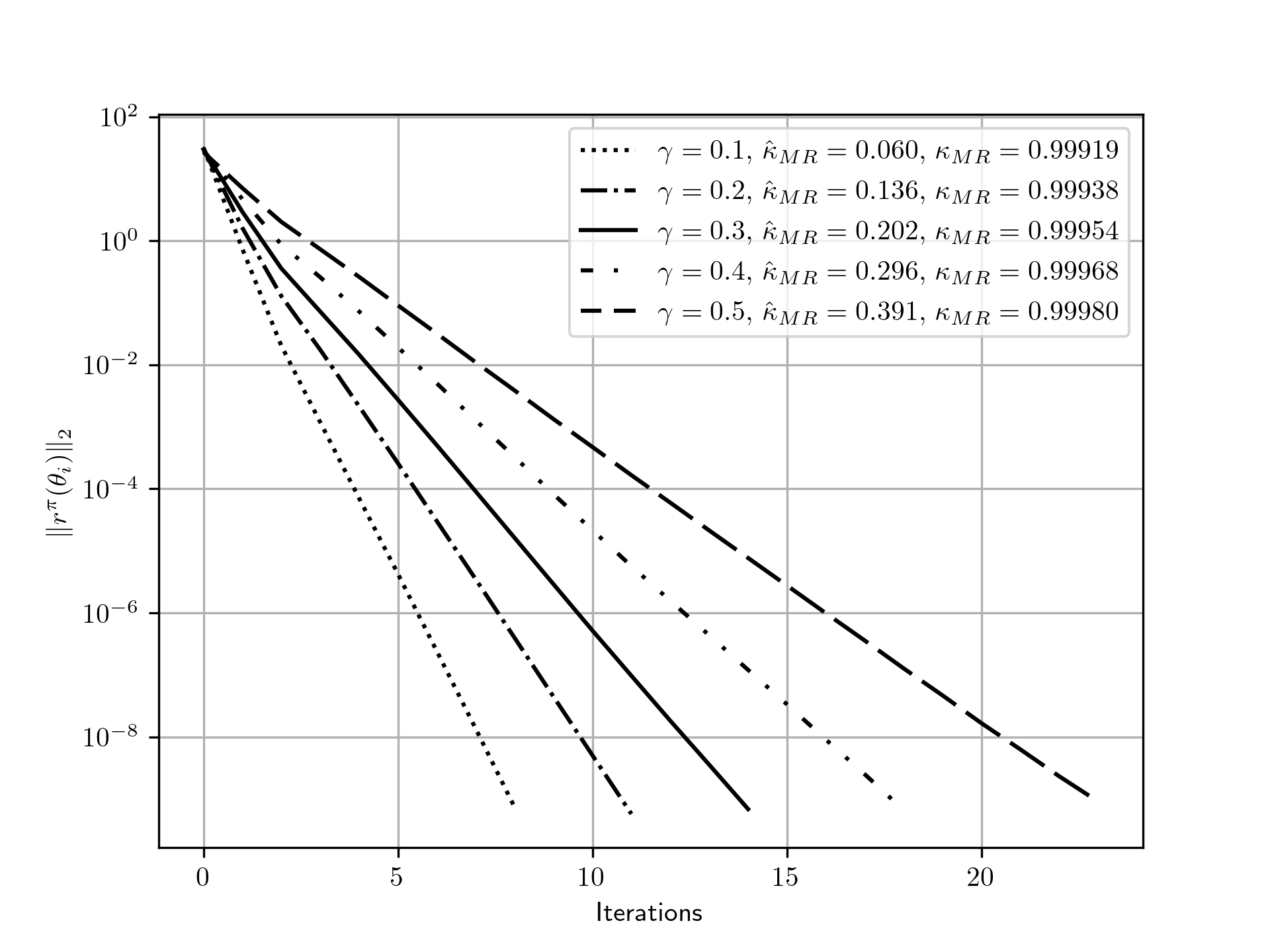}
\caption{We consider the same MDP and policy $\pi$ as in Figure~\ref{fig:MinRes_criteria} and vary the discount factor. We solve iteratively the resulting  policy evaluation tasks with the minimal residual method. For each benchmark we report in the legend $\kappa_{\text{MR}}$ and the empirically estimated contraction rate $\hat{\kappa}_{\text{MR}}$ and we plot the 2-norm of the residuals versus iteration number. }
\label{fig:MinRes_contraction_rate}
\end{figure}

\begin{figure}
    \centering
    \includegraphics[scale=0.6]{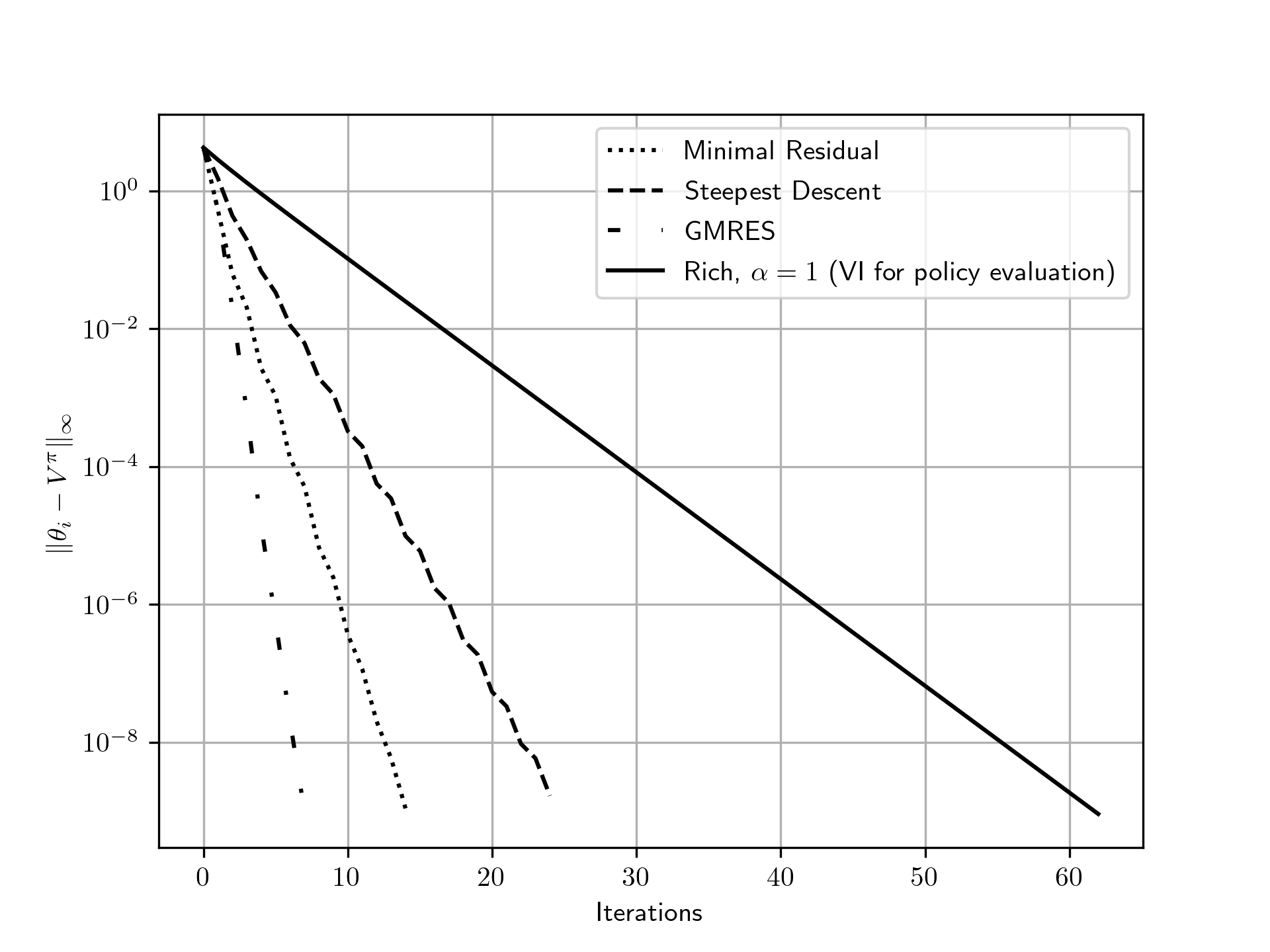}
    \caption{We consider a discounted MDP with $\gamma=0.7$, $n=100$ and $m=50$ and we solve the policy evaluation task for a random policy $\pi\in\Pi$ with Richardson's method with $\nu=1$, the minimal residual method, steepest descent and GMRES. The figure displays the distance in infinity-norm of the iterates from the solution versus number of iterations.}
    \label{fig:MinRes_criteria1}
\end{figure}

\begin{figure}
    \centering
    \includegraphics[scale=0.6]{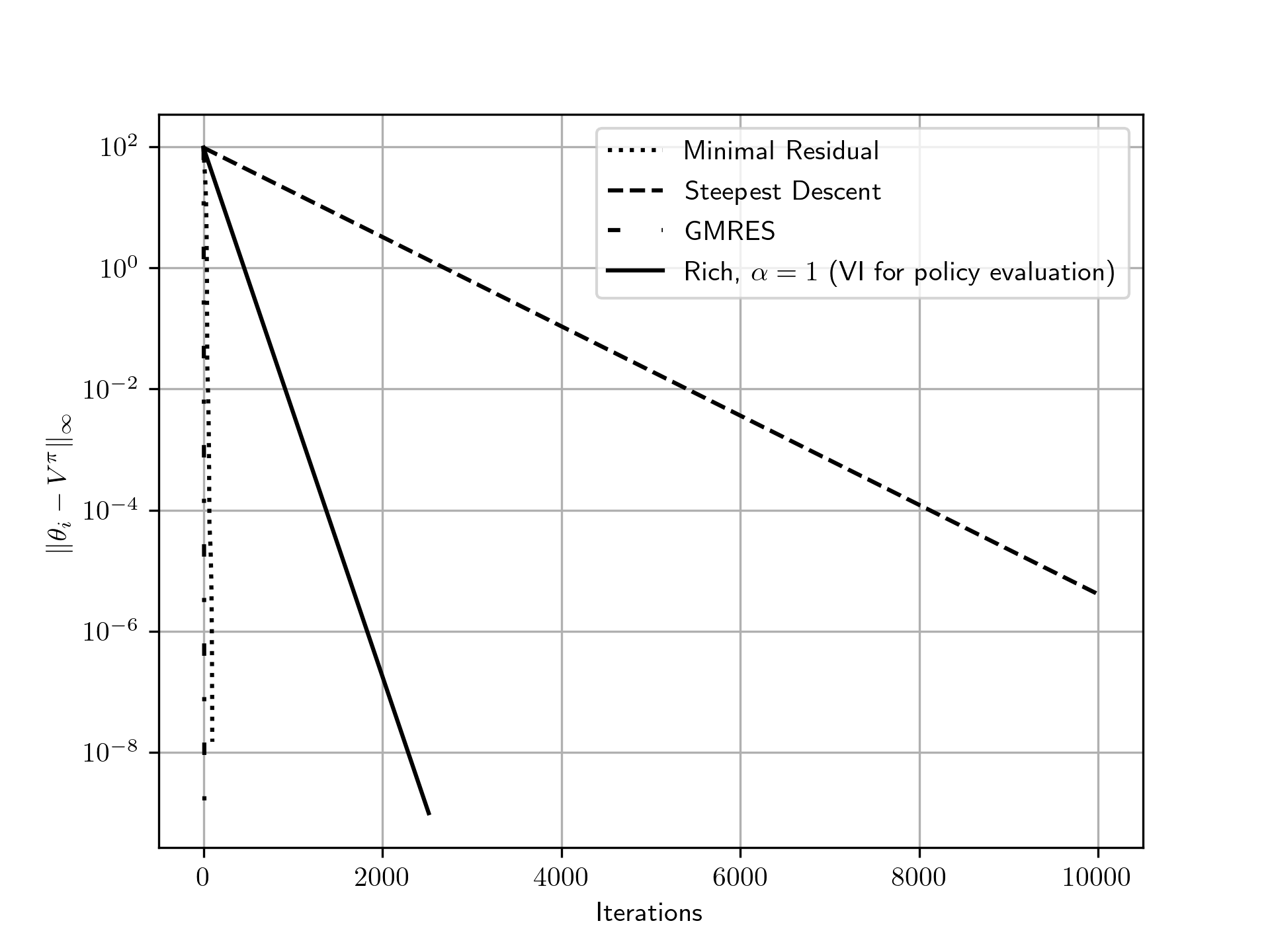}
    \caption{We consider the same  discounted MDP as in Figure~\ref{fig:MinRes_criteria1} but with $\gamma=0.99$ and we solve the policy evaluation task for a random policy $\pi\in\Pi$ with Richardson's method with $\nu=1$, the minimal residual method, steepest descent and GMRES. The figure displays the distance in infinity-norm of the iterates from the solution versus number of iterations.}
    \label{fig:MinRes_criteria2}
\end{figure}

\begin{figure}
    \centering
    \includegraphics[scale=0.6]{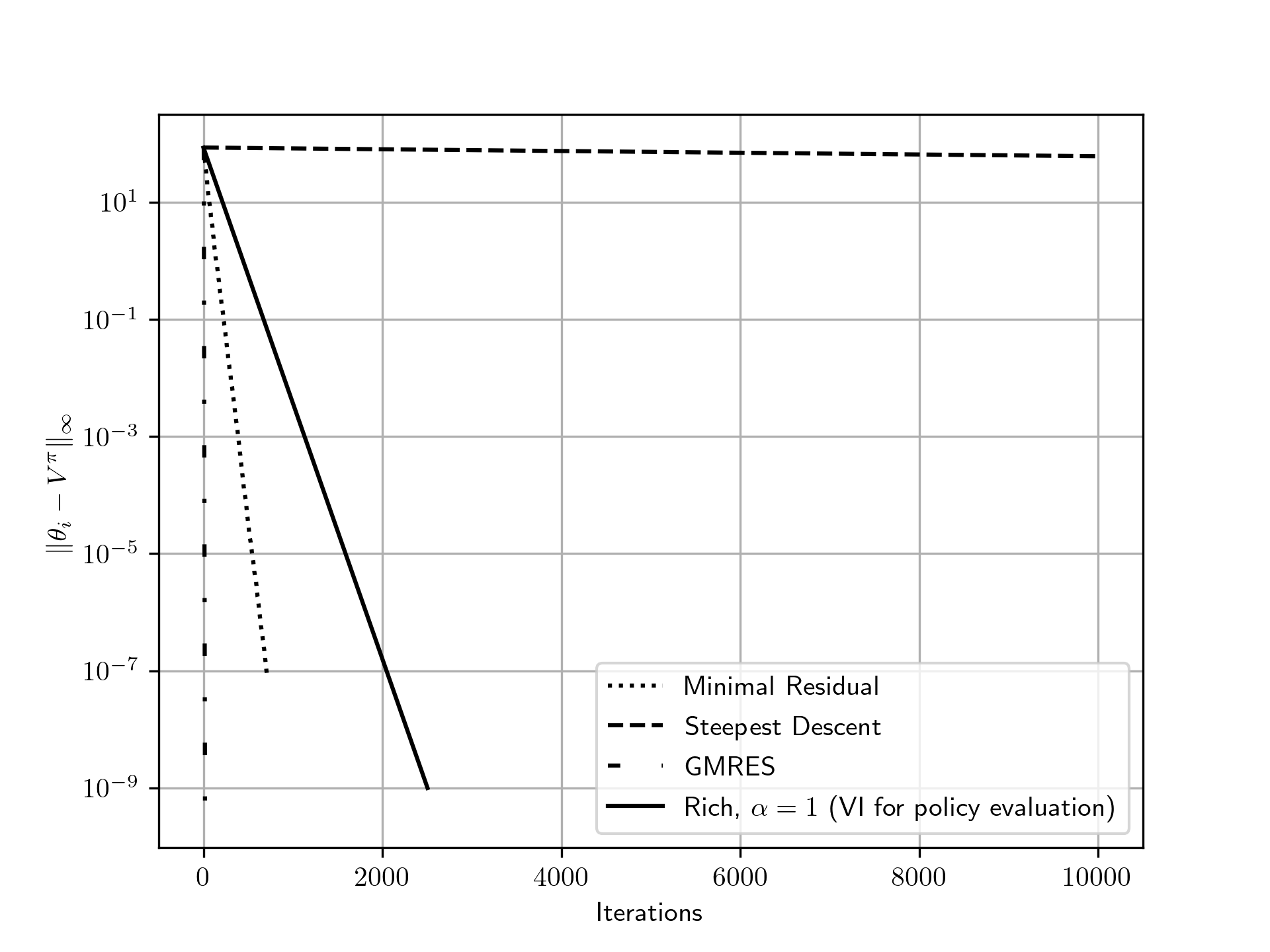}
    \caption{We consider a discounted MDP with $\gamma=0.99$, $n=100$ and $m=50$ and we solve the policy evaluation task for a random policy $\pi\in\Pi$ with Richardson's method with $\nu=1$, the minimal residual method, steepest descent and GMRES. The MDP is structurally different from the one deployed for the benchmarks in Figure~\ref{fig:MinRes_criteria2}. The figure displays the distance in infinity-norm of the iterates from the solution versus number of iterations.}
    \label{fig:MinRes_criteria3}
\end{figure}

\subsubsection{GMRES}\label{subsec:gmres}
GMRES~\cite{gmres} is an iterative method for linear systems with non-singular coefficient matrices and it belongs to the class of Krylov subspace methods~\cite[Chapter 6]{saad}. Starting from an initial guess $\Delta {\theta}_0\in\mathbb{R}^n$ with residual $\Phi^{\pi}_0 = -r^{\pi}(\bar{V}) - \left( I - \gamma P^{\pi} \right) \Delta {\theta}_0$, GMRES generates a sequence $\left\{ \Delta {\theta}_{i} \right\}$ of approximate solutions to~\eqref{eq: policy_eval} with
\begin{equation}
\Delta {\theta}_i = \arg \min_{\Delta \theta} \left\{ \Vert -r^{\pi}(\bar{V}) - \left( I - \gamma P^{\pi} \right) \Delta \theta \Vert_2 \,:\, \Delta \theta \in \Delta {\theta}_0 + \mathcal{K}_i \right\}\,,
\end{equation}
where $\mathcal{K}_i = \text{span}\left\{ \Phi^{\pi}_0,\, \left( I - \gamma P^{\pi}\right)\Phi^{\pi}_0,\, \left( I - \gamma P^{\pi}\right)^2\Phi^{\pi}_0,\, \dots, \, \left( I - \gamma P^{\pi}\right)^{i-1}\Phi^{\pi}_0 \right\}$ and it is also known as the $i$-th Krylov subspace~\cite[Chapter 6]{saad}. In other words, the $i$-th iterate is the vector that minimizes the 2-norm of the residual of~\eqref{eq: policy_eval} in the affine space $\Delta {\theta}_0 + \mathcal{K}_i$. Since working directly with $\left\{\Phi^{\pi}_0,\, \left( I - \gamma P^{\pi}\right)\Phi^{\pi}_0,\, \left( I - \gamma P^{\pi}\right)^2\Phi^{\pi}_0,\, \dots, \, \left( I - \gamma P^{\pi}\right)^{i-1}\Phi^{\pi}_0 \right\}$ is not computationally efficient and assuming that $\Vert \Phi^{\pi}_0 \Vert > 0$, the method constructs an orthonormal basis $\left\{ q_1, \dots, q_i \right\}$ where $q_1 = \Phi^{\pi}_0 / \Vert \Phi^{\pi}_0 \Vert_2$ and $\left\{ q_z \right\}_{z=2}^i$ are generated with the \textit{Arnoldi's method}~\cite{Arnoldi1951ThePO}. The output of this orthonormalization algorithm is the matrix $Q_i \in \mathbb{R}^{n \times i}$, whose columns are the vectors of the orthonormal basis, and an Hessenberg matrix $H_i\in \mathbb{R}^{(i+1)\times i}$ that satisfies the following relation
\begin{equation}\label{eq: gmres_hessenberg_eq}
A Q_i = Q_{i+1} H_i\,.
\end{equation} 
Since any $\Delta \theta \in \Delta {\theta}_0 + \mathcal{K}_i$ can be rewritten as $\Delta \theta = \Delta {\theta}_0 + Q_i y$ for some $y \in \mathbb{R}^i$ and by exploiting~\eqref{eq: gmres_hessenberg_eq}, we can rewrite the residual associated to $\Delta \theta$ as a function of $y$ as follows
\begin{equation}\label{eq: gmres_xy}
\Phi_i^{\pi} = \Phi^{\pi}_0 - \left( I - \gamma P^{\pi} \right) Q_i y =  Q_{i+1}\left( \Vert \Phi_0^{\pi} \Vert_2 e_1 - H_i y \right)\,,
\end{equation}
where $e_1 = [1,0,\dots,0]^{\top} \in\mathbb{R}^{i+1}$. The $i$-th iterate is selected as the $\Delta \theta \in \Delta {\theta}_0 + \mathcal{K}_i$ which minimizes the 2-norm of~\eqref{eq: gmres_xy}, and therefore as $\Delta {\theta}_i = \Delta {\theta}_0 + Q_i \tilde{y}$ where 
\begin{equation}
\tilde{y} = \arg\min_{y} \Vert \Vert \Phi_0^{\pi} \Vert_2 e_1 - H_i y \Vert_2\,.
\end{equation}
Unlike for the conjugate gradient method, an orthonormal basis of $\mathcal{K}_i$ can not be computed with a short recurrence. Consequently, when $i$ increases, the number of stored vectors also increases like $i$ and the number of multiplications like $0.5 i^2 n$. A practical variant of GMRES consists in restarting the algorithm after every $i$ iterations.

With exact arithmetic the sequence of iterates generated by GMRES converges to the solution of~\eqref{eq: policy_eval} in at most $\ell = \sum_{\lambda\in \Lambda(I - \gamma P^{\pi})} b(\lambda) \leq n$ iterations~\cite{Campbell1996GMRESAT}. For the large-scale cases where the minimal polynomial has a high degree then GMRES may require a significant number of iterations to reach convergence. In these scenarios it becomes particularly important to characterize its convergence properties in terms of improvement per iteration, as it is often intractable to run $\ell$-iterations. In this regard, it is known in the literature that GMRES has particularly favorable convergence properties when the eigenvalues of the coefficient matrix are clustered in a circle of center $(1,0)$ and radius $\xi < 1$. In~\cite{Campbell1996GMRESAT} the authors show that in these scenarios the contraction rate is determined by the radius $\xi$ of the cluster.
These theoretical results are confirmed by extensive numerical examples, where it is possible to observe that, while for coefficient matrices with non-clustered eigenvalues the norm of the residuals stagnates and does not notably decrease up to the very last iteration, for coefficient matrices with clustered eigenvalues significant and steady progress is observed starting from the first iteration~\cite[Figure 1]{gargiani_2023}.  

Linear systems arising from policy evaluation tasks in the considered problem setting have always a coefficient matrix with eigenvalues in the circle with center $(1,0)$ and radius $\gamma$~\cite[Lemma 1]{gargiani_2023}. Therefore, GMRES operates in the favorable convergence regime. The following proposition characterizes further the convergence properties of GMRES when deployed for policy evaluation for general, ergodic and regular MDPs. 

\begin{proposition}\label{prop: GMRES_contraction}
Consider a general infinite-horizon discounted MDP with finite spaces and the linear system for the evaluation of the cost associated to a policy $\pi \in \Pi$. The sequence $\left\{ \Phi^{\pi}({\theta}_i)\right\}$ generated by GMRES satisfies the following inequality
\begin{equation}\label{eq: GMRES_generalMDP}
\Vert\Phi^{\pi}({\theta}_i)\Vert_2 \leq C_1\gamma^{i} \Vert \Phi^{\pi}({\theta}_0)\Vert_2 \quad i=1,\dots\,,
\end{equation}
where $C_1>0$ is a constant and it is independent of $i$. If the MDP is ergodic and $h_{\pi}<n$, then
\begin{equation}\label{eq: GMRES_ergodicMDP}
\Vert\Phi^{\pi}({\theta}_{h_{\pi} + i})\Vert_2 \leq C_2\vert \lambda_{h_{\pi} + 1} \vert^{i} \Vert \Phi^{\pi}({\theta}_0)\Vert_2 \quad i = h_{\pi}+1,\dots\,,
\end{equation}
where $C_2>0$ is a constant and it is independent of $i$.
\end{proposition}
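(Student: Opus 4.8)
The plan is to exploit the optimality that defines GMRES. Writing $A = I-\gamma P^{\pi}$ and denoting by $\Phi^{\pi}(\theta_i)$ the residual after $i$ inner iterations, the minimization over the affine Krylov space $\theta_0+\mathcal{K}_i$ is equivalent to the polynomial minimax problem
\begin{equation*}
\Vert \Phi^{\pi}(\theta_i)\Vert_2 = \min_{\substack{p\in\mathcal{P}_i\\ p(0)=1}}\Vert p(A)\,\Phi^{\pi}(\theta_0)\Vert_2\,,
\end{equation*}
where $\mathcal{P}_i$ is the space of polynomials of degree at most $i$. Any admissible residual polynomial (degree $\le i$, value one at the origin) thus yields an upper bound, so the whole argument reduces to selecting polynomials tailored to the spectrum of $A$, which is contained in the disk of centre $(1,0)$ and radius $\gamma$~\cite{gargiani_2023}.

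For the general bound~\eqref{eq: GMRES_generalMDP} I would use the explicit choice $p(z)=(1-z)^i$, which has degree $i$ and satisfies $p(0)=1$. Then $p(A)=(I-A)^i=(\gamma P^{\pi})^i=\gamma^i (P^{\pi})^i$, and since a power of a row-stochastic matrix is again row-stochastic we have $\Vert (P^{\pi})^i\Vert_{\infty}=1$ and $\Vert (P^{\pi})^i\Vert_1\le n$, whence $\Vert (P^{\pi})^i\Vert_2\le\sqrt{\Vert (P^{\pi})^i\Vert_1\,\Vert (P^{\pi})^i\Vert_{\infty}}\le\sqrt{n}$. Substituting into the minimax bound gives $\Vert \Phi^{\pi}(\theta_i)\Vert_2\le\gamma^i\Vert (P^{\pi})^i\Vert_2\,\Vert \Phi^{\pi}(\theta_0)\Vert_2\le \sqrt{n}\,\gamma^i\Vert \Phi^{\pi}(\theta_0)\Vert_2$, i.e. $C_1=\sqrt{n}$, independent of $i$. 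Notably this step requires only stochasticity, not diagonalizability of $P^{\pi}$.

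For the ergodic refinement~\eqref{eq: GMRES_ergodicMDP} I would split the spectrum of $P^{\pi}$ into its $h_{\pi}$ eigenvalues of modulus one, which are simple by irreducibility and Proposition~\ref{prop: irreducible_matrix_simple}, and the remaining eigenvalues of modulus at most $\vert\lambda_{h_{\pi}+1}\vert<1$. I would then take $p(z)=\psi(z)(1-z)^i$, where $\psi(z)=\prod_{j=1}^{h_{\pi}}\frac{\mu_j-z}{\mu_j}$ with $\mu_j=1-\gamma\lambda_j$ the $h_{\pi}$ boundary eigenvalues of $A$; this $\psi$ has degree $h_{\pi}$, satisfies $\psi(0)=1$ and vanishes at every $\mu_j$, so $p$ is admissible at GMRES step $h_{\pi}+i$. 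Using the spectral decomposition $P^{\pi}=\sum_{j=1}^{h_{\pi}}\lambda_j\Pi_j+B$, with $\Pi_j$ the rank-one projectors onto the simple unit-modulus eigenspaces and $\rho(B)=\vert\lambda_{h_{\pi}+1}\vert$, the factor $\psi(A)$ annihilates the boundary components while $(1-z)^i$ acts as $\gamma^i B^i$ on the complementary invariant subspace. This yields $\Vert\Phi^{\pi}(\theta_{h_{\pi}+i})\Vert_2\le \Vert\psi(A)\Pi_B\Vert_2\,\gamma^i\,\Vert B^i\Vert_2\,\Vert\Phi^{\pi}(\theta_0)\Vert_2$, where $\Pi_B=I-\sum_{j=1}^{h_{\pi}}\Pi_j$.

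The main obstacle is the final estimate of $\Vert B^i\Vert_2$: because the interior eigenvalues need not be semisimple, $B$ may be defective and $\Vert B^i\Vert_2$ may carry a polynomial Jordan factor $i^{\,b}$ on top of $\vert\lambda_{h_{\pi}+1}\vert^i$. This is precisely where the surplus factor $\gamma^i$ is used: since $\gamma<1$ we have $\sup_{i\ge 0} i^{\,b}\gamma^i<\infty$, so $i^{\,b}\gamma^i\vert\lambda_{h_{\pi}+1}\vert^i\le C\,\vert\lambda_{h_{\pi}+1}\vert^i$ and the polynomial growth is absorbed into an $i$-independent constant $C_2$. This also explains why the statement is phrased with $\vert\lambda_{h_{\pi}+1}\vert^i$ rather than the sharper $(\gamma\vert\lambda_{h_{\pi}+1}\vert)^i$. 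The remaining bookkeeping — that $\psi(A)$ and the projectors commute through the functional calculus and that $\Vert\psi(A)\Pi_B\Vert_2$ is a finite constant not depending on $i$ — is then routine.
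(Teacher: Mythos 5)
Your argument is correct, but it takes a genuinely different route from the paper's. The paper's proof is essentially a citation: after noting that the spectrum of $I-\gamma P^{\pi}$ lies in the disk of centre $(1,0)$ and radius $\gamma$, it invokes Proposition 2.2 of~\cite{Campbell1996GMRESAT}, treating the $h_{\pi}$ unit-modulus eigenvalues as outliers (each needing one iteration, since they are simple) and the remaining spectrum as a cluster of radius $\vert\lambda_{h_{\pi}+1}\vert$. You instead rederive both bounds from the GMRES residual-polynomial optimality with explicit test polynomials: $(1-z)^i$ for~\eqref{eq: GMRES_generalMDP}, which reduces everything to $\Vert(\gamma P^{\pi})^i\Vert_2\leq\sqrt{n}\,\gamma^i$ by stochasticity alone and so delivers the fully explicit, structure-free constant $C_1=\sqrt{n}$; and $\psi(z)(1-z)^i$ for~\eqref{eq: GMRES_ergodicMDP}, where your observation that the surplus factor $\gamma^i$ absorbs the polynomial Jordan growth of $\Vert B^i\Vert_2$ is exactly the right mechanism for handling possibly defective interior eigenvalues, and it explains why the attainable rate is $\vert\lambda_{h_{\pi}+1}\vert$ rather than $\gamma\vert\lambda_{h_{\pi}+1}\vert$. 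One small point you should make explicit: the GMRES minimization is over real polynomials, so $\psi$ must have real coefficients; this holds because the unit-modulus eigenvalues of the real matrix $P^{\pi}$ (equivalently the $\mu_j$) form a conjugate-closed set. What the paper's route buys is brevity and a direct link to known GMRES theory; what yours buys is a self-contained proof with explicit constants and no hidden conditioning or diagonalizability factors.
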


\begin{proof}
In the considered setting, the coefficient matrix of the linear system associated to a policy evaluation task has always eigenvalues contained in the circle of radius $\gamma$ and center $(1,0)$~\cite[Lemma 1]{gargiani_2023}. We can therefore directly use the results of Proposition 2.2 in~\cite{Campbell1996GMRESAT} to obtain 
Inequality~\eqref{eq: GMRES_generalMDP}. The same results can be used to prove Inequality~\eqref{eq: GMRES_ergodicMDP} for ergodic MDPs, where the first $h_{\pi}$ eigenvalues are treated as outliers. From Proposition~\ref{prop: irreducible_matrix_simple} and the fact that $1\leq b(\lambda) \leq a(\lambda)$, we can conclude that $h_{\pi}$ iterations are needed to process the first $h_{\pi}$ eigenvalues. After the first $h_{\pi}$ iterations, we can use Inequality (4) in~\cite[Proposition 2.2]{Campbell1996GMRESAT}, where the radius of the cluster is $\vert \lambda_{h_{\pi}+1}\vert<\gamma$.
\end{proof}
Proposition~\ref{prop: GMRES_contraction} suggests that for general MDPs the asymptotic convergence factor of GMRES is given by the radius $\gamma$ of the circle where the eigenvalues are clustered. For ergodic MDPs, GMRES takes $h_{\pi}$ iterations to process the eigenvalues of $P^{\pi}$ with modulus 1. For this phase the improvement per iteration is still described by~\eqref{eq: GMRES_generalMDP}. Then the asymptotic convergence factor is $\vert \lambda_{h_{\pi} + 1} \vert < \gamma$. Because of Proposition~\ref{prop: rhoA_greater1}, the results of Proposition~\ref{prop: GMRES_contraction} further imply that for the subclass of regular MDPs only one iteration is required to process the eigenvalue $(1,0)$ of $P^{\pi}$ and then for $i > 1$
\begin{equation}
\Vert \Phi^{\pi}({\theta}_i) \Vert_2 \leq C_2 \vert \lambda_2 \vert^i \Vert \Phi^{\pi}({\theta}_0) \Vert_2\,.
\end{equation}

As shown in Proposition~\ref{prop: GMRES_contraction}, for general and ergodic MDPs Richardson's method can not always achieve an asymptotic convergence rate faster than $\gamma$. The latter is attained with the choice $\nu=1$, which leads to value iteration for policy evaluation. These considerations and the results discussed in Proposition~\ref{prop: GMRES_contraction} advocate in favor of GMRES for ergodic MDPs in terms of better asymptotic contraction rate, especially when $h_{\pi} << n$ and $\vert \lambda_{h_{\pi} + 1} \vert << \gamma$.
\section{Numerical Evaluation}\label{sec6}
This section is dedicated to numerically evaluating the performance of iPI methods. For that we consider an MDP arising from a compartmental model commonly deployed in epidemiology to mathematically model infectious diseases~\cite{hethcote}. Details on the model are given in~\ref{subsec: SIS_model}, while in~\ref{subsec: benchmarks} we comment on the convergence and scalability performances of iPI methods when used to compute health policies for a population with large size. 
\subsection{Infectious Disease Model}\label{subsec: SIS_model}
We consider a Susceptible-Infectious-Susceptible (SIS) model to describe the evolution of an infectious disease with no immunity conferred by the previous infection~\cite{hethcote}. This model is characterized by two classes: the susceptible and the infective class. Individuals move from the susceptible class to the infective class and then back to the susceptible class upon recovery.
As in~\cite{sis_model}, we assume that the infectious period is fixed and equal to 1; that is, an individual that is infective at time $t$ remains infectious over the interval $[t, t+1]$, but she/he will recover upon diagnosis and effective treatment at time $t+1$, re-entering therefore in the susceptible class. We denote with $s(t)$ and $i(t)$ the number of individuals in the susceptible and infective class at time $t$, respectively.   Since we consider populations with a fixed-size, then
\begin{equation}\label{eq: SIS_dynamics}
s(t) + i(t) = N \quad \forall \, t\geq 0\,.
\end{equation}
From~\eqref{eq: SIS_dynamics} we obtain that only the information of a single class are needed to describe the state of the disease and we choose $s(t)$ to be the state of the model. From now on, we drop the dependency on time, unless needed.
We extend the static model in~\cite{sis_model} by adding a dynamic component which allows one to derive health policies. In particular, we design actions, introduce an action-state dependent stage-cost and make also the distribution of the driving event in~\cite{} action-dependent. For the actions, we propose $\mathcal{A} = \mathcal{A}_1 \times \mathcal{A}_2$ as action set, where $\mathcal{A}_1 = \left\{ 0,1,2,3,4 \right\}$ are the levels of hygiene measures and $\mathcal{A}_2 = \left\{ 0, 1, 2, 3 \right\}$ are the levels of social distancing imposed by the public health authorities. Considering influenza as example, levels of actions in increasing order in the first set could correspond to \textit{no measures}, \textit{frequent hand washing and disinfection}, \textit{mandatory surgical masks}, \textit{mandatory FFP2 masks} and \textit{mandatory full body protection}, respectively. Levels of actions in increasing order in the second set could correspond to \textit{no social restrictions}, \textit{mandatory social distancing}, \textit{restaurants and stores closure} and \textit{full lockdown}, respectively. As stage-cost, we consider the following multi-objective cost function
\begin{equation}
g(s, a) = w_f c_f(a) - w_q c_q(a) + w_h c_h(s) \,,
\end{equation}
where $w_f,\,w_q,\,w_h \geq 0$, $c_f:\mathcal{A}\rightarrow \mathbb{R}$ captures the financial costs and losses determined by the hygiene and social measures that are put in place, $c_q :\mathcal{A}\rightarrow [0,1]$ assigns to each action a quality of life score, and $c_h:\mathcal{S}\rightarrow \mathbb{R}$ maps the number of infected people to the medical cost incurred for their treatments. 

The driving event from the susceptible class to the infective class is the random variable $I(t)$, which represents the number of new infections occurring during the interval $[t, \,t+1]$. As in~\cite{sis_model}, we model the probability mass function for the driving event as a binomial distribution. Hence, the probability mass function for the driving event for any $a\in\mathcal{A}$ and $s\in\mathcal{S}$ is 
\begin{equation}
P\left(I(t) = i \,\vert\, s\right)=
\begin{cases}
\binom{s}{i} q(s,a)^{i} (1-q(s,a))^{s-i} ,\quad 0\leq i \leq s  \\
\,\, 0,\quad \text{else},
\end{cases}
\end{equation}
where $q : \mathcal{S}\times\mathcal{A} \rightarrow [0,1]$ with $q(s,a) = 1 - \exp\left({-\lambda(a)\beta(s)\psi(a)}\right)$ is the overall probability that a susceptible person becomes infected. The latter is a function of $\psi(a)$, $\beta(s)$ and $\lambda(a)$, which are, respectively, the probability that a susceptible person becomes infected upon contact with an infectious individual, the probability that the next interaction of a random susceptible person is with an infectious person, and the contact rate. These parameters are specific of the considered infectious disease. 

Finally, we consider $s(t) = N$ as an absorbing state since no infected individuals remain in the population. 
\subsection{Benchmarks}\label{subsec: benchmarks}
We study the empirical convergence properties of iPI methods with different inner solvers when they are used to solve infinite-horizon discounted MDPs arising from dynamic SIS-models.
For the benchmarks, we consider a dynamic SIS model with $N=10000$. Fig.~\ref{fig:SIS_transition_prob} displays the sparsity structure of the resulting transition probability matrices.

We implemented PI and iPI methods in Python, relying on numpy and scipy libraries for the dense and sparse linear algebra operations, respectively. We refer to \url{https://gitlab.ethz.ch/gmatilde/SIS_iPI} for more details on the selection of the other parameters of the model as well as for the code. All benchmarks were run on a Core(TM) i7-10750H CPU @ 2.60GHz. 

After the selection of the inner solver, we call the resulting iPI method {inexact \textit{inner solver name}-policy iteration} (i\textit{innersolvername}-PI). For instance, if Richardson's method is used as inner solver for the inexact policy evaluation step, the resulting scheme is called iRich-PI. To guarantee a finite-time execution of the code, in the case the stopping-condition is not met after $500$ inner iterations, we terminate the inner loop and update the outer solution. In addition, the overall algorithm is terminated if the suboptimality falls below a given threshold or in case the execution time exceeds 500 seconds.

As underlined by our theoretical analysis, the disocunt factor plays a fundamental role in determining the convergence properties of the inner solver and, therefore, the overall performance of an iPI scheme. Consequently, we study two different scenarios: small ($\gamma=0.1$) and large ($\gamma=0.9$) values of discount factor. In both cases and in line with the theoretical analysis, PI is the method converging in the smallest number of iterations. The inexactness of iPI methods results in a higher number of iterations required to achieve convergence to the solution, where convergence is intended within a certain tolerance. On the other hand, the CPU-time can be dramatically reduced thanks to the inexactness and if an efficient inner solver is deployed. iGMRES-PI is the fastest converging method in terms of CPU-time and it is $\times$ 1.46 and $\times$ 1.56 faster than PI in the small and large discount factor scenarios, respectively. iMinRes-PI shows also a good and stable performance across the two scenarios, while iSD-PI is competitive only in the small discount factor scenario. In the large discount factor scenario it indeed fails to achieve convergence within the given time-budget, and this is attributable to the exacerbating slow convergence rate of SD as inner solver for this specific scenario. iRich-PI performance is also not competitive since, in both scenarios, the method takes longer than PI to achieve convergence in terms of CPU-time. Similarly as for the iSD-PI method, this is ascribable to the slow convergence rate of the inner solver.
In Figure~\ref{fig:iPI_sub3} we study the impact of the discount factor on convergence in terms CPU-time of iPI methods with $\alpha = 0.1$ and PI when used to solve a dynamic SIS MDP with $N=10000$. As depicted in the plot, for low-values of discount factor iMinRes-PI is the fastest converging method, followed by iGMRES-PI, while for high-values of discount factor iGMRES-PI attains the best performance. As in the previous benchmarks, iRich-PI is not competitive and for high-values of discount 
factor it even fails to achieve convergence within the given time and inner-iterations limits. A similar consideration holds for iSD-PI, but for a larger interval of discount factor values. 
In Figure~\ref{fig:iPI_sub4} we benchmark iGMRES-PI wth $\alpha=0.1$ and PI on dynamic SIS MDPs with different population sizes and $\gamma=0.9$. As to be expected, for small-scale regimes PI tends to be the fastest converging method, but its performance does not scale well. For large-scale settings, indeed, iGMRES-PI converges up to more than $\times 3.7$ faster than PI.

From the conducted benchmarks we can conclude that, in line with the theoretical results, inexactness of iPI methods results in a slower convergence rate in terms of iterations, but a faster method in terms of CPU-time, provided an appropriate choice of the inner solver. The selection of the latter is indeed fundamental to obtain a competitive method. 

\begin{figure}
\centering
\begin{subfigure}{.5\textwidth}
  \centering
  \includegraphics[width=\linewidth]{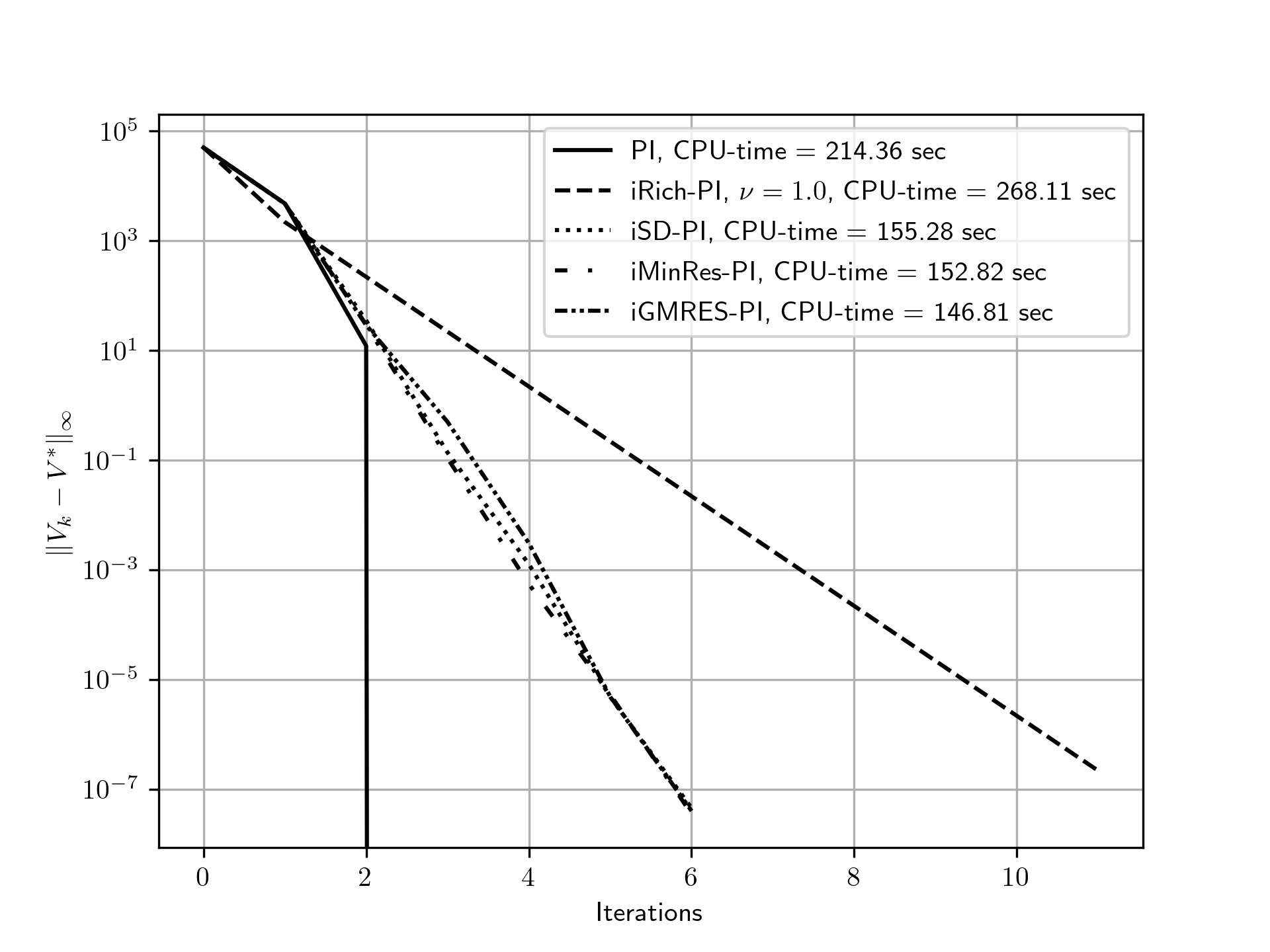}
  \caption{$\gamma=0.1$}
  \label{fig:iPI_sub1}
\end{subfigure}%
\begin{subfigure}{.5\textwidth}
  \centering
  \includegraphics[width=\linewidth]{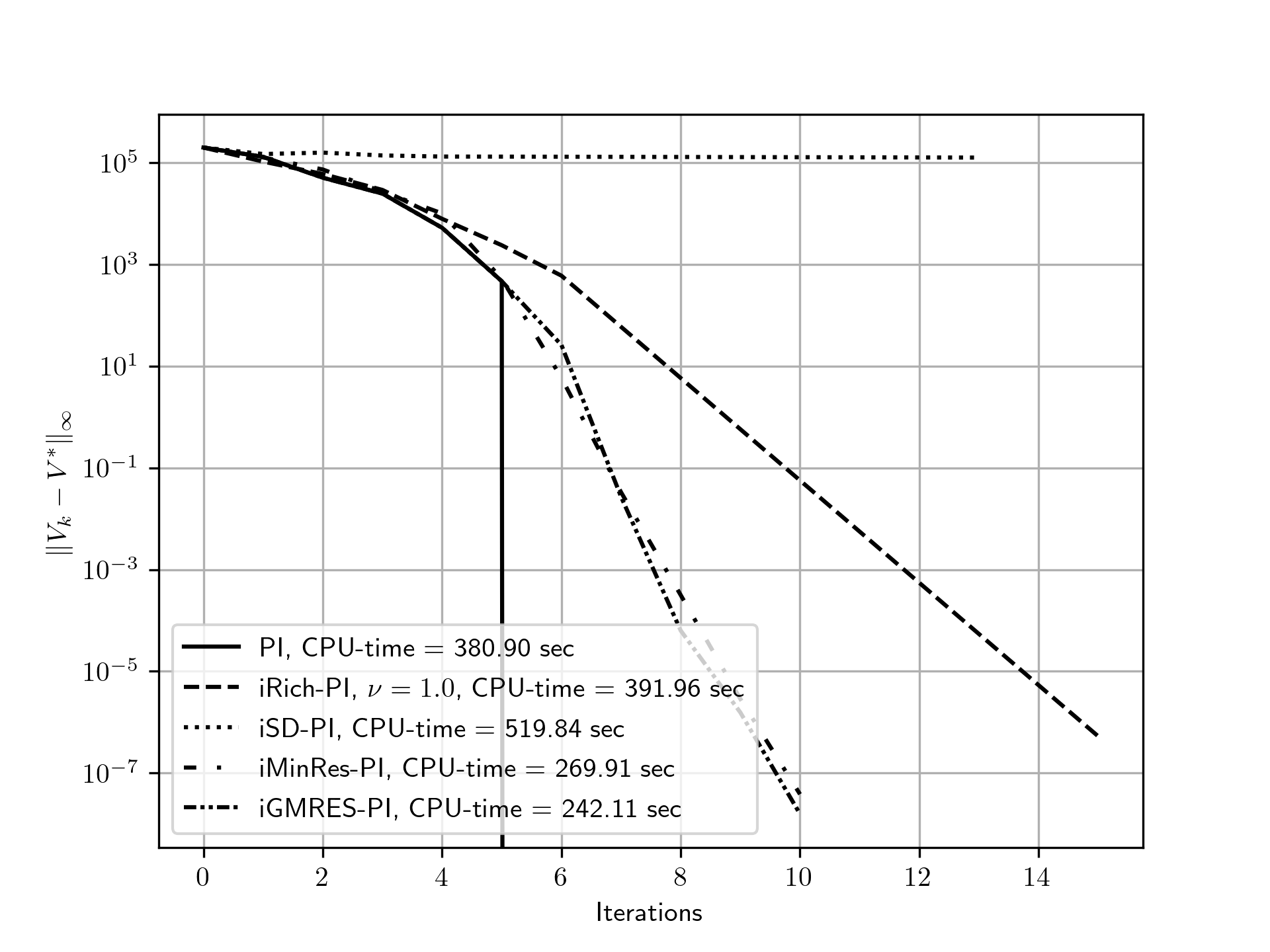}
  \caption{$\gamma=0.9$}
  \label{fig:iPI_sub2}
\end{subfigure}
\caption{We consider a SIS model with actions and population size of 10000. We solve the associated infinite-horizon discounted MDP with PI and different iPI methods with $\alpha = 0.1$. In particular, as inner solvers, we consider Richardson with $\nu=1$, steepest descent, minimal residual method and GMRES. We study the empirical convergence properties of these methods for two different values of discount factor and plot the distance in infinity-norm of the iterates from the solution versus number of iterations. In the legend, we also report the overall CPU-time in seconds.}
\label{fig:iPI_benchmarks}
\end{figure}

\begin{figure}
\centering
\begin{subfigure}{.8\textwidth}
  \centering
  \includegraphics[width=\linewidth]{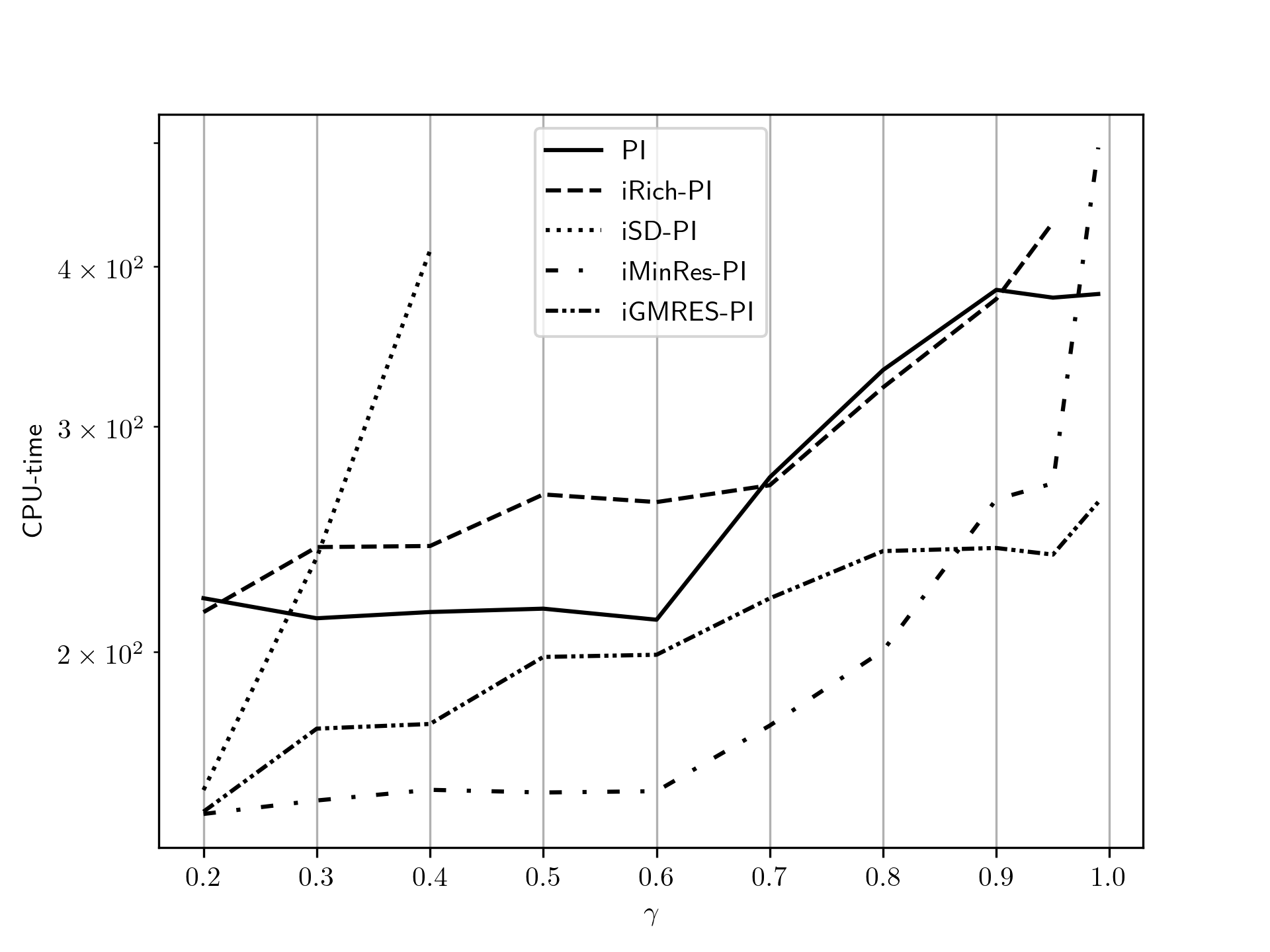}
  \caption{CPU-time versus discount factor values for a dynamic SIS MDP with $N=10000$. For the iPI methods we set $\alpha=0.1$.}
  \label{fig:iPI_sub3}
\end{subfigure}%

\begin{subfigure}{.8\textwidth}
  \centering
  \includegraphics[width=\linewidth]{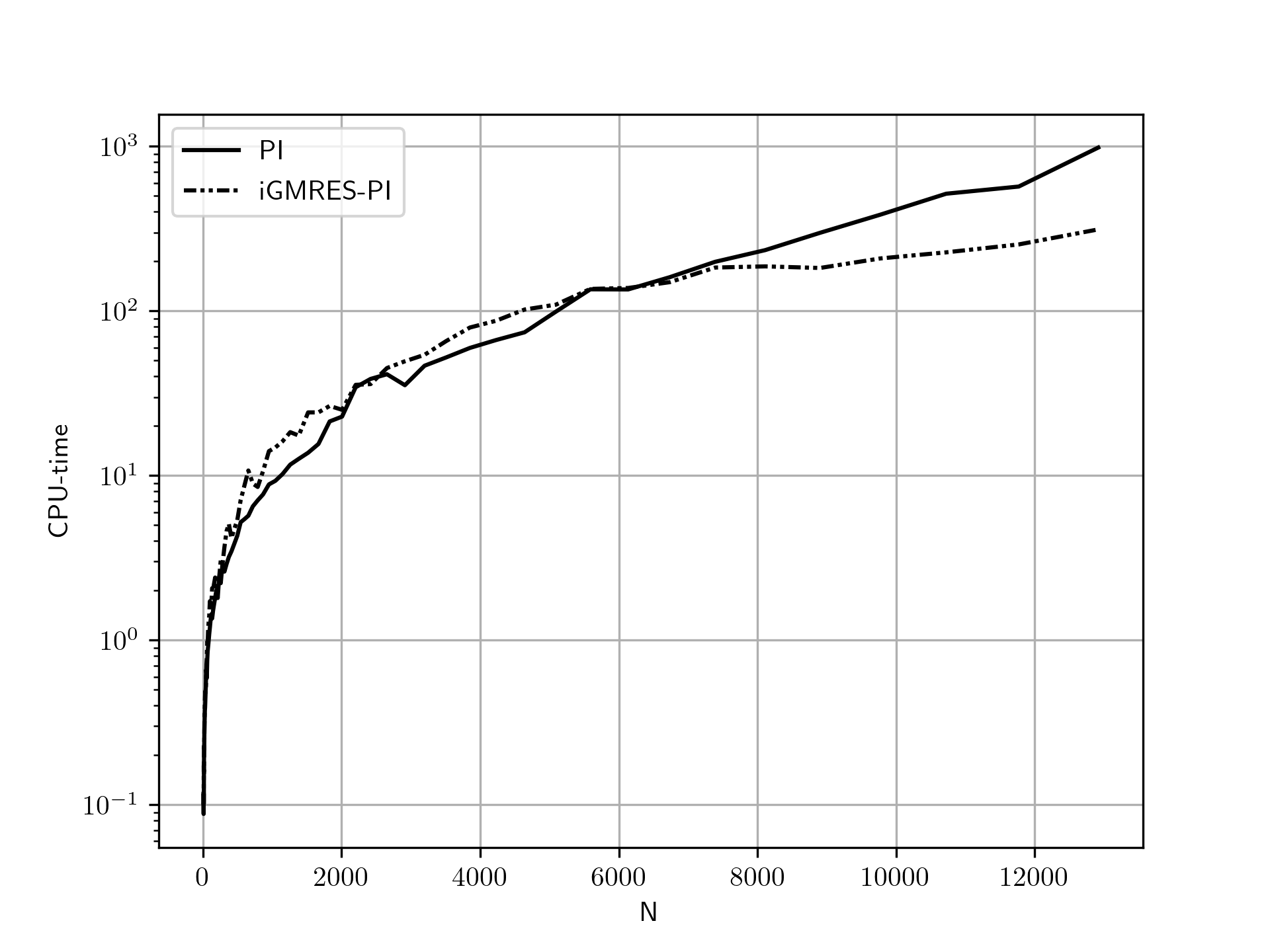}
  \caption{CPU-time versus population sizes of dynamic SIS MDPs with $\gamma=0.9$. For the iGMRES-PI we set $\alpha=0.1$.}
  \label{fig:iPI_sub4}
\end{subfigure}
\caption{}
\label{fig:iPI_cpu}
\end{figure}

\begin{figure}
    \centering
    \includegraphics[scale=0.25]{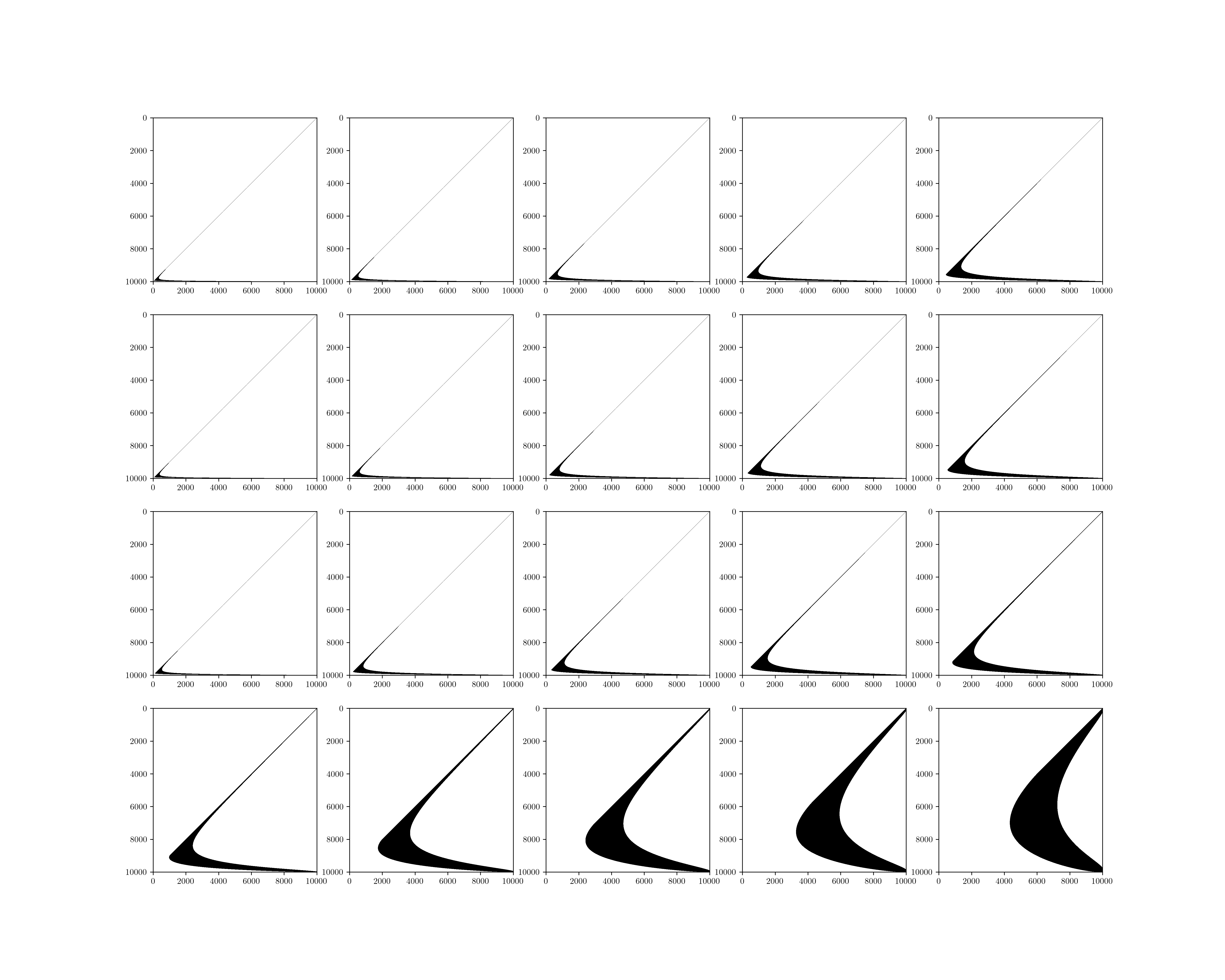}
    \caption{Graphical representation of the sparsity structure of the transition probability matrices for the dynamic SIS model deployed to generate the benchmarks in Fig.~\ref{fig:iPI_benchmarks}. White color is used for the zero entries and black color for the non-zero entries. The $(i,j)$-plot displays the transition probability matrix associated to the $(i,j)$-action pair.}
    \label{fig:SIS_transition_prob}
\end{figure}

\section{Conclusions}
We analyse a novel class of DP methods, inexact policy iteration methods, which are based on an approximate solution of the policy evaluation. After discussing their convergence properties and the importance of a good inner solver selection, we consider different iterative methods for policy evaluation and characterize their convergence behavior for the specific task considered. We conclude by showcasing the competitive performance of inexact policy iteration methods on large-scale MDPs arising from dynamic SIS models. In line with the derived theoretical results, our numerical evidence shows that value iteration as inner solver may result into a slow converging overall scheme. A much better choice is represented by inner solvers that adjust step and search direction based on locally available information, such as the minimal residual method and GMRES. 

Future extensions of the current work include an improved global convergence analysis, which should encompass the need for extra assumptions on the structural properties of the underlying MDP, as well as a distributed implementation of inexact policy iteration schemes, which would allow to overcome the memory limitations of a single machine and, consequently, to scale to potentially gigantic problem sizes making use of modern high-performance computer clusters.

\section*{Acknowledgement}
This work was supported by the European Research Council under
the Horizon 2020 Advanced under Grant 787845 (OCAL).


\bigskip
\bibliographystyle{abbrv}
\bibliography{sn-article}

\begin{thebibliography}{10}

\bibitem{DEUFLHARD1991366}
Global inexact {N}ewton methods for very large scale nonlinear problems.
\newblock {\em IMPACT of Computing in Science and Engineering}, 3(4):366--393, 1991.

\bibitem{inexactnewtonMR}
Newton-{MR}: {I}nexact {N}ewton method with minimum residual sub-problem solver.
\newblock {\em EURO Journal on Computational Optimization}, 10:100035, 2022.

\bibitem{Arnoldi1951ThePO}
W.~E. Arnoldi.
\newblock The principle of minimized iterations in the solution of the matrix eigenvalue problem.
\newblock {\em Quarterly of Applied Mathematics}, 9:17--29, 1951.

\bibitem{Bellman:1957}
R.~Bellman.
\newblock {\em Dynamic Programming}.
\newblock Princeton University Press, Princeton, NJ, USA, 1 edition, 1957.

\bibitem{nonnegative_matrices}
A.~Berman and R.~J. Plemmons.
\newblock {\em Nonnegative Matrices in the Mathematical Sciences}.
\newblock Society for Industrial and Applied Mathematics, 1994.

\bibitem{DB_book}
D.~P. Bertsekas.
\newblock {\em Dynamic Programming and Optimal Control, Vol. II}.
\newblock Athena Scientific, Belmont, Massachusets, 4th edition, 2012.

\bibitem{finance_MDP}
N.~Bäuerle and U.~Rieder.
\newblock {\em Markov Decision Processes with Applications to Finance}.
\newblock Springer Berlin Heidelberg, 2011.

\bibitem{Campbell1996GMRESAT}
S.~L. Campbell, I.~C.~F. Ipsen, C.~T. Kelley, and C.~D. Meyer.
\newblock {GMRES} and the minimal polynomial.
\newblock {\em BIT Numerical Mathematics}, 36:664--675, 1996.

\bibitem{inexact_Newton}
R.~S. Dembo, S.~C. Eisenstat, and T.~Steihaug.
\newblock Inexact newton methods.
\newblock {\em SIAM Journal on Numerical Analysis}, 19(2):400--408, 1982.

\bibitem{facchinei_bookI}
F.~Facchinei and J.-S. Pang.
\newblock {\em Finite-Dimensional Variational Inequalities and Complementarity Problems, Vol. I}.
\newblock Springer New York, NY, 1st edition, 2003.

\bibitem{facchinei_bookII}
F.~Facchinei and J.-S. Pang.
\newblock {\em Finite-Dimensional Variational Inequalities and Complementarity Problems, Vol. II}.
\newblock Springer New York, NY, 1st edition, 2003.

\bibitem{inexactgmresnewton}
A.~Flueck and H.-D. Chiang.
\newblock Solving the nonlinear power flow equations with an inexact {N}ewton method using {GMRES}.
\newblock {\em IEEE Transactions on Power Systems}, 13(2):267--273, 1998.

\bibitem{gargiani_2023}
M.~Gargiani, D.~Liao-McPherson, A.~Zanelli, and J.~Lygeros.
\newblock Inexact {GMRES} policy iteration for large-scale {M}arkov decision processes.
\newblock {\em IFAC-PapersOnLine}, 56(2):11249--11254, 2023.
\newblock 22nd IFAC World Congress.

\bibitem{gargiani_minibatch}
M.~Gargiani, A.~Martinelli, M.~R. Martinez, and J.~Lygeros.
\newblock Parallel and flexible dynamic programming via the mini-batch {B}ellman operator.
\newblock {\em IEEE Transactions on Automatic Control}, 69(1):455--462, 2024.

\bibitem{gargiani_2022}
M.~Gargiani, A.~Zanelli, D.~Liao-McPherson, T.~Summers, and J.~Lygeros.
\newblock Dynamic programming through the lens of semismooth {N}ewton-type methods.
\newblock {\em IEEE Control Systems Letters}, 6:1--1, 01 2022.

\bibitem{gargiani_extended_2022}
M.~Gargiani, A.~Zanelli, D.~Liao-McPherson, T.~Summers, and J.~Lygeros.
\newblock Dynamic programming through the lens of semismooth {N}ewton-type methods (extended version).
\newblock {\em https://arxiv.org/abs/2203.08678}, 2022.

\bibitem{gerschgorin_31}
S.~Gerschgorin.
\newblock Uber die {A}bgrenzung der {E}igenwerte einer {M}atrix.
\newblock {\em Izvestija Akademii Nauk SSSR, Serija Matematika}, 7(3):749--754, 1931.

\bibitem{iterative_solutions}
W.~Hackbusch.
\newblock {\em Iterative Solution of Large Sparse Systems of Equations}.
\newblock Springer International Publishing, 2nd edition, 2016.

\bibitem{hethcote}
H.~W. Hethcote.
\newblock The mathematics of infectious diseases.
\newblock {\em SIAM Review}, 42(4):599--653, 2000.

\bibitem{izmailov_book}
A.~F. Izmailov and M.~V. Solodov.
\newblock {\em Newton-Type Methods for Optimization and Variational Problems}.
\newblock Springer Cham, 1st edition, 2014.

\bibitem{yuchao_phd}
Y.~Li.
\newblock Approximate methods of optimal control via dynamic programming models.
\newblock {\em Doctoral Thesis in Electrical Engineering, KTH Royal Institute of Technology}, 2023.

\bibitem{MARTINEZ1995127}
J.~Martínez and L.~Qi.
\newblock Inexact {N}ewton methods for solving nonsmooth equations.
\newblock {\em Journal of Computational and Applied Mathematics}, 60(1):127--145, 1995.
\newblock Proceedings of the International Meeting on Linear/Nonlinear Iterative Methods and Verification of Solution.

\bibitem{agriculture_MDP}
L.~Nielsen, E.~Jørgensen, and S.~Højsgaard.
\newblock Embedding a state space model into a {M}arkov decision process.
\newblock {\em Annals OR}, 190:289--309, 10 2011.

\bibitem{puterman_mdp}
M.~L. Puterman.
\newblock {\em Markov Decision Processes: Discrete Stochastic Dynamic Programming}.
\newblock John Wiley \& Sons, Inc., USA, 1st edition, 1994.

\bibitem{inexactnewtoncg}
A.~Rieder.
\newblock Inexact {N}ewton regularization using conjugate gradients as inner iteration.
\newblock {\em SIAM Journal on Numerical Analysis}, 43(2):604--622, 2005.

\bibitem{rockafellar_1976}
R.~T. Rockafellar.
\newblock Monotone operators and the proximal point algorithm.
\newblock {\em SIAM Journal on Control and Optimization}, 14(5):877--898, 1976.

\bibitem{saad}
Y.~Saad.
\newblock {\em Iterative methods for sparse linear systems}.
\newblock SIAM, 2003.

\bibitem{gmres}
Y.~Saad and M.~H. Schultz.
\newblock {GMRES}: {A} generalized minimal residual algorithm for solving nonsymmetric linear systems.
\newblock {\em SIAM Journal on Scientific and Statistical Computing}, 7(3):856--869, 1986.

\bibitem{White1993ASO}
D.~J. White.
\newblock A survey of applications of {M}arkov decision processes.
\newblock {\em Journal of the Operational Research Society}, 44:1073--1096, 1993.

\bibitem{sis_model}
R.~Yaesoubi and T.~Cohen.
\newblock Generalized {M}arkov models of infectious disease spread: {A} novel framework for developing dynamic health policies.
\newblock {\em European Journal of Operational Research}, 215(3):679--687, 2011.

\end{thebibliography}


\end{document}